\theoremstyle{plain}
\newtheorem{thrm}{Theorem}[section]
\def\bthm{\begin{thrm}}
\def\ethm{\end{thrm}}
\newtheorem{theoremx}{Theorem}
\newtheorem{conjx}{Conjecture}
\newtheorem{prop}[thrm]{Proposition}
\def\bprop{\begin{prop}}
\def\eprop{\end{prop}}
\newtheorem{ques}[thrm]{Question}
\def\bques{\begin{ques}}
\def\eques{\end{ques}}
\newtheorem{cjec}[thrm]{Conjecture}
\def\bcjec{\begin{cjec}}
\def\ecjec{\end{cjec}}
\newtheorem{cor}[thrm]{Corollary}
\def\bcor{\begin{cor}}
\def\ecor{\end{cor}}
\newtheorem{fact}[thrm]{Fact}
\def\bfact{\begin{fact}}
\def\efact{\end{fact}}
\newtheorem{lem}[thrm]{Lemma}
\newtheorem{lemma}[thrm]{Lemma}
\def\blem{\begin{lem}}
\def\elem{\end{lem}}
\theoremstyle{definition}
\newtheorem{defn}[thrm]{Definition}
\def\bdefn{\begin{defn}}
\def\edefn{\end{defn}}
\newtheorem{nota}[thrm]{Notation}
\def\bnota{\begin{nota}}
\def\enota{\end{nota}}
\newtheorem*{conc}{Conclusion}
\def\bconc{\begin{conc}}
\def\econc{\end{conc}}
\newtheorem{setup}[thrm]{Setup}
\newtheorem{alg}[thrm]{Algorithm}
\def\balg{\begin{alg}}
\def\ealg{\end{alg}}
\def\bproof{\begin{proof}}
\def\eproof{\end{proof}}
\theoremstyle{remark}
\newtheorem{rem}[thrm]{Remark}
\def\brem{\begin{rem}}
\def\erem{\end{rem}}
\newtheorem{ex}[thrm]{Example}
\def\bex{\begin{ex}}
\def\eex{\end{ex}}
\newtheorem{exs}[thrm]{Examples}
\def\bexs{\begin{exs}}
\def\eexs{\end{exs}}
\newtheorem{obs}{Observation}
\def\bobs{\begin{obs}}
\def\eobs{\end{obs}}	
\let\ra=\rightarrow
\let\hra=\hookrightarrow
\let\bra=\mapsto
\let\iso=\cong
\let\tensor=\otimes
\let\setminus=\smallsetminus
\let\epsilon=\varepsilon
\def\A{\mathcal{A}}
\def\N{\mathbb{N}}
\def\Z{\mathbb{Z}}
\def\K{\mathbb{K}}
\def\a{\mathfrak{a}}
\def\k{\Bbbk}
\def\kk{\Bbbk}
\def\p{\mathfrak{p}}
\def\m{\mathfrak{m}}
\DeclareMathOperator{\id}{id}
\DeclareMathOperator{\Ass}{Ass}
\DeclareMathOperator{\Supp}{Supp}
\DeclareMathOperator{\hgt}{ht}
\DeclareMathOperator{\fchar}{char}
\DeclareMathOperator{\n}{\mathfrak{n}}
\newcommand{\mc}[1]{\mathcal{ #1 }}
\newcommand{\card}[1]{\left \lvert #1 \right\rvert}
\newcommand{\ol}[1]{\overline{#1}}
\newcommand{\ul}[1]{\underline{#1}}
\renewcommand{\subset}{\subseteq}
\renewcommand{\supset}{\supseteq}
   \def\MR#1{}
\newcommand\xleftrightarrow[2][]{\ext@arrow 0099{\longleftrightarrowfill@}{#1}{#2}}
\def\longleftrightarrowfill@{\arrowfill@\leftarrow\relbar\rightarrow}
\newcommand\xrowht[2][0]{\addstackgap[.5\dimexpr#2\relax]{\vphantom{#1}}}
\begin{document}

\title{F-Purity of Binomial Edge Ideals}
\author[A. LaClair]{Adam LaClair}
\address{University of Nebraska-Lincoln, Department of Mathematics, Lincoln, NE, USA}
\email{alaclair2@unl.edu}
\author[J. McCullough]{Jason McCullough}
\address{Iowa State University, Department of Mathematics, Ames, IA, USA}
\email{jmccullo@iastate.edu}


\subjclass[2020]{Primary 13A70, 13A35. Secondary 13F65, 05E40.}

\begin{abstract}
In 2012, K.\ Matsuda introduced the class of weakly closed graphs and investigated when binomial edge ideals are F-pure. He proved that weakly closed binomial edge ideals are F-pure whenever the base field has positive characteristic. He conjectured that: (i) when the base field has characteristic two, every F-pure binomial edge ideal comes from a weakly closed graph; and (ii) that every binomial edge ideal is F-pure provided that the characteristic of the residue field is sufficiently large. 

In this paper, we resolve both of Matsuda's conjectures.  We confirm Matsuda's first conjecture, showing that the binomial edge ideal of a graph defines an F-pure quotient in characteristic 2 if and only if the graph is weakly closed.  We also show that Matsuda's second conjecture is false in a very strong way by showing that graphs containing asteroidal triples, such as the net, define non-F-pure binomial edge ideals in any positive characteristic.  Our results yield a complete classification of F-pure binomial edge ideals of chordal graphs as well as large families of standard graded algebras that are F-injective but neither F-pure nor F-rational in all characteristics. 
\end{abstract}

\keywords{Matsuda's conjecture, binomial edge ideals, F-purity, weakly closed graphs, Gallai's theorem}

\maketitle

\section{Introduction}

Let $I \subset R := \k[x_{1},\ldots,x_{n}]$ be an ideal in a polynomial ring where $\fchar(\k) = p > 0$. We say that $R/I$ is F-pure if $F_{\ast}(R/I)$ is a pure R-module, where $F$ denotes the Frobenius map $F:r \mapsto r^p$. F-purity first appeared in the work of Hochster and Roberts, who utilized characteristic $p$ techniques to prove that rings of invariants are Cohen--Macaulay \cite{hochster1974rings}. Since its introduction, F-purity has assumed a prominent position among the various classes of F-singularities, due in part to its important connections to log canonical singularities in birational geometry \cite{hara2002f}, \cite{mustata2005FThresholds}, \cite{takagi2004fpure}. Within commutative algebra and algebraic geometry, F-pure rings constitute a mild class of F-singularities that possess many desirable properties, such as vanishing results on graded components of local cohomology modules \cite{hochster1976purity}, and satisfying the stable Harbourne conjecture \cite{grifo2019symbolic}. One active direction of research is to provide a characterization of F-pure rings under combinatorial or topological hypotheses on the ideal $I$. Such results include classifying F-pure rings of: dimension 1 \cite{goto1977structure}, dimension 2 normal Gorenstein \cite{watanabe1988study}, Stanley--Reisner rings, Schubert determinantal ideals \cite{brion2007frobenius}, ladder determinantal ideals \cite{de2023ladder}, and partial results on binomial edge ideals \cite{matsuda2018weakly}, \cite{ohtani2011graphs}. In this paper, we investigate the F-purity of binomial edge ideals.

Let $G$ be a simple graph on $n$ vertices, i.e. $G$ has no repeated edges or loops. Let $\k$ be a field and $R$ the polynomial ring $\k[x_{1},\ldots,x_{n},y_{1},\ldots,y_{n}]$. With this setup, Herzog, Hibi, Hreinsd\'{o}ttir, Kahle, and Rauh \cite{herzog2010binomial}, and independently Ohtani \cite{ohtani2011graphs}, associated to $G$ the binomial edge ideal $J_{G} \subset R$ defined as
\begin{align*}
    J_{G} := \left( \{ f_{i,j} \mid \{i,j\} \in E(G) \} \right) \subset R,
\end{align*}
where $f_{i,j} := x_{i}y_{j} - x_{j}y_{i}$. We set $R_{G} := R/J_{G}$. Since their introduction, various authors have studied the relationship between the algebraic properties of $R_{G}$ and the combinatorial structure of $G$. 

The study of the F-singularities of $R_{G}$ was pioneered by Ohtani \cite{ohtani2013binomial}, who proved that if $G$ is a complete multipartite graph and the base field $\k$ has positive characteristic, then $R_{G}$ is F-pure. Subsequently, Matsuda \cite{matsuda2018weakly} generalized this result of Ohtani via the introduction of weakly closed graphs (see Definition \ref{defn:weakly_closed}), and he proved
that every weakly closed  graph defines an F-pure binomial edge ideal (Theorem \ref{thm:weakly_closed_are_F_pure}); he conjectured that the converse should hold when the base field has characteristic two 

\begin{conjx}[Matsuda {\cite[Conjecture 2.8]{matsuda2018weakly}}]
\label{conj:weakly_closed_equiv_F_pure_in_char_2}
Let $G$ be a graph. Assume that the characteristic of the base field $\k$ is two. Then, $G$ is weakly closed if and only if $R_{G}$ is F-pure.
\end{conjx}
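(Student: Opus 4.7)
The only nontrivial implication is the converse: assuming $\fchar(\k) = 2$ and $R_G$ is F-pure, one must show $G$ is weakly closed, since the other direction is Theorem~\ref{thm:weakly_closed_are_F_pure}. I would argue the contrapositive: given a graph $G$ that fails to be weakly closed, exhibit a failure of Fedder's criterion for $R_G$.

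The first tool is a retraction argument. For any induced subgraph $H$ of $G$ on vertex set $S \subseteq V(G)$, the substitution sending $x_j, y_j \mapsto 0$ for $j \notin S$ defines a surjection $R_G \twoheadrightarrow R_H$, split by the obvious inclusion of the variables indexed by $S$. The hypothesis that $H$ is \emph{induced} is essential: it guarantees the image of $J_G$ under this substitution is exactly $J_H$, since no edges of $G$ with both endpoints in $S$ are lost. Because F-purity is inherited by retracts, it suffices to exhibit, for every minimal non-weakly-closed induced subgraph $H$, a failure of F-purity for $R_H$.

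The second tool is Fedder's criterion: $R_H$ is F-pure if and only if $(J_H^{[2]} : J_H) \not\subseteq \m^{[2]}$, where $\m$ is the irrelevant maximal ideal. In characteristic $2$, the square $f_{i,j}^2 = x_i^2 y_j^2 + x_j^2 y_i^2$ collapses (the cross-term $-2\, x_i x_j y_i y_j$ vanishes), which makes $J_H^{[2]}$ sparse and highly structured. Expanding the condition $u \cdot f_{i,j} \in J_H^{[2]}$ across the edges of $H$ yields a homogeneous linear system on the monomial coefficients of $u$; using the fine $\Z^{2n}$-multigrading on $R$ to decompose everything by multidegree, one verifies that every surviving witness $u$ has each monomial divisible by some $x_i^2$ or $y_i^2$, i.e., $u \in \m^{[2]}$.

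Putting these together, the plan is: (i) distill the definition of weakly closed into a finite list $\mathcal{F}$ of minimal forbidden induced subgraphs, so that every non-weakly-closed graph contains some $H \in \mathcal{F}$ as an induced subgraph; and (ii) for each $H \in \mathcal{F}$, run the Fedder computation above to conclude $R_H$ is not F-pure in characteristic $2$. The main obstacle is step (i) — converting the (implicitly inductive) definition of weakly closed into a clean forbidden-subgraph form — together with the multigraded bookkeeping in step (ii), where one must rule out every candidate witness modulo $J_H^{[2]}$. The characteristic-$2$ collapse of $f_{i,j}^2$ is the decisive ingredient that keeps the colon ideal tractable and is precisely what the hypothesis $\fchar(\k) = 2$ buys; in particular, it explains why the converse can fail in other characteristics and why the statement is sharp.
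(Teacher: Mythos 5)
Your retraction argument (descent of F-purity to induced subgraphs) matches the paper's Lemma~\ref{lem:F_split_descends_to_induced_subgraph} exactly, and your observation that Fedder's criterion is the right tool, together with the characteristic-$2$ collapse $f_{i,j}^2 = x_i^2 y_j^2 + x_j^2 y_i^2$, is on target. However, there is a genuine gap at the core of your plan.

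You propose to ``distill the definition of weakly closed into a \emph{finite} list $\mathcal{F}$ of minimal forbidden induced subgraphs'' and then run a Fedder computation on each. This is not possible: the minimal forbidden induced subgraph characterization of weakly closed graphs is \emph{infinite}. The paper identifies weakly closed graphs with co-comparability graphs (Theorem~\ref{thm:weakly_closed_equivalent_co_comparable}) and then invokes Gallai's theorem (Theorem~\ref{thm:gallai}), whose forbidden-subgraph list contains ten sporadic graphs \emph{plus eight infinite families}: the cycles $C_n$ ($n \geq 6$), three families $XF_2^{n+1}, XF_3^{n+1}, XF_4^{n+1}$, the odd anti-holes $\overline{C_{2n+1}}$, and the families $\mathrm{co\!-\!}XF_1^{2n+3}$, $\mathrm{co\!-\!}XF_5^{2n+3}$, $\mathrm{co\!-\!}XF_6^{2n+2}$. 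A per-graph multigraded linear-algebra computation, while viable for the sporadics, cannot terminate on the infinite families. Your proposal has no mechanism to handle them.

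The paper resolves this in two ways that your outline does not anticipate. First, the sporadic graphs and four of the infinite families turn out to contain asteroidal triples, and the paper proves a uniform, characteristic-independent theorem (Theorem~\ref{thm:non_F_purity}) showing any such graph fails F-purity in every $p > 0$ — this bypasses per-graph Fedder computations entirely and explains why only part of the list really uses $p=2$. Second, the four ``co-regular'' families do not contain asteroidal triples and genuinely need the characteristic-$2$ hypothesis; for them the paper develops inductive machinery — explicit descriptions of the colon ideals $P^{[2]}:J_G$ for minimal primes (Theorem~\ref{thrm:colon_minl_prime}), a ``cycling'' lemma (Lemma~\ref{lem:cycling}) that propagates support constraints around the graph, and a reduction proposition (Proposition~\ref{prop:reduction_step_fedder_computation}) that shrinks the Fedder check to a fixed five- or seven-vertex core checked by Macaulay2. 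Without something like this induction-on-$n$ structure, ``running the Fedder computation'' on $\overline{C_{2n+1}}$ for all $n$ is not a proof.
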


\noindent Work of Gonz{\'a}lez-Mart{\'\i}nez \cite[Theorem 4.3]{gonzalez2021gorenstein} and Koley--Varbaro \cite[Corollary 4.11]{koley2023grobner} established that $R_{G}$ is F-injective for every graph $G$, since there is a monomial order for which the initial ideal of $J_{G}$ is squarefree. Later, Seccia \cite{seccia2023binomial} gave a new proof of Matsuda's result, showing that binomial edge ideals of weakly closed graphs were Knutson ideals, and extended Matsuda's theorem (Theorem \ref{thm:weakly_closed_are_F_pure}) to generalized binomial edge ideals.  The first author investigated the F-pure threshold of binomial edge ideals in \cite{laclair2023invariants}. However, Matsuda's weakly closed conjecture has remained completely open. In this paper, we prove Matsuda's Conjecture~\ref{conj:weakly_closed_equiv_F_pure_in_char_2}.

\begin{theoremx}
\label{thm:matsudas_conjecture_is_true}
Conjecture \ref{conj:weakly_closed_equiv_F_pure_in_char_2} is true.
\end{theoremx}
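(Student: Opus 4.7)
The forward implication, that weakly closed graphs give F-pure quotients, is Matsuda's Theorem \ref{thm:weakly_closed_are_F_pure}, so the task is the converse: if $G$ is not weakly closed then $R_G$ is not F-pure in characteristic~$2$. I would argue by contrapositive, reducing from $G$ to a minimal non-weakly-closed induced subgraph and then applying Fedder's criterion directly on that subgraph.

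The first ingredient is an induced-subgraph inheritance principle. For any induced subgraph $H = G[W]$, the surjection $\pi \colon R_G \thra R_H$ sending $x_v, y_v \mapsto 0$ for $v \notin W$ is a well-defined ring map split by the natural inclusion $\iota \colon R_H \hra R_G$, because the generators $f_{ij}$ with an endpoint outside $W$ are sent to $0$ while those with both endpoints in $W$ are sent to generators of $J_H$. Thus $R_H$ is an $R_H$-module direct summand of $R_G$, and since F-purity descends to direct summands, F-purity of $R_G$ forces F-purity of $R_H$. This reduces the theorem to showing that every minimal non-weakly-closed graph $H$ defines a non-F-pure binomial edge ideal in characteristic~$2$.

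The second ingredient is a combinatorial characterization of minimal non-weakly-closed graphs. I would extract this from a direct analysis of Matsuda's definition combined with the Gallai-theoretic structure of asteroidal-triple-free graphs alluded to in the abstract. I expect the list of minimal obstructions to split into two kinds: graphs containing an asteroidal triple, such as the net, which the paper's parallel main result already shows to be non-F-pure in \emph{every} positive characteristic; and a small family of sporadic obstructions that fail F-purity only in characteristic~$2$. The reduction thus isolates this second family as the genuinely characteristic-$2$ content of the theorem.

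The third and main step is the direct Fedder-criterion computation for the characteristic-$2$-specific obstructions: $R_H$ is F-pure if and only if $J_H^{[2]} : J_H \not\subset \mathfrak{m}^{[2]}$, where $\mathfrak{m}$ is the irrelevant ideal. In characteristic~$2$ the identity $f_{ij}^2 = x_i^2 y_j^2 + x_j^2 y_i^2$ makes $J_H^{[2]}$ a sum of particularly simple principal ideals, so the colon computation collapses to a tractable combinatorial matching problem on the edges of $H$. My plan is to parametrize candidate Fedder witnesses by their multidegrees and show, for each minimal obstruction, that every admissible witness must contain some variable to degree exactly~$1$, thereby forcing containment in $\mathfrak{m}^{[2]}$. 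The main obstacle, as I anticipate, is carrying out this parametrization cleanly and uniformly across the list of minimal obstructions without missing a sporadic case; combining the resulting non-F-purity with the inheritance principle then completes the proof.
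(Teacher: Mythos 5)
Your high-level architecture matches the paper's: forward implication from Matsuda's theorem, inheritance to induced subgraphs, reduction to a list of forbidden subgraphs, then Fedder's criterion on each member of the list. Your inheritance argument is essentially correct (the paper phrases it via algebra retracts and F-splitting over a perfect field, which is equivalent to your direct-summand argument), so the first two ingredients are sound.

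The gap is in your third step, and it is substantial. You propose a single ``tractable combinatorial matching problem on the edges of $H$'' handling all the minimal obstructions, but the forbidden list for co-comparability (equivalently weakly closed, by Matsuda's Theorem~\ref{thm:weakly_closed_equivalent_co_comparable}) is given by Gallai's Theorem~\ref{thm:gallai}, and it contains not just sporadic graphs but also four infinite ``co-regular'' families: odd anti-holes $\overline{C_{2n+1}}$, $\mathrm{co\!-\!XF}_1^{2n+3}$, $\mathrm{co\!-\!XF}_5^{2n+3}$, $\mathrm{co\!-\!XF}_6^{2n+2}$. These families contain no asteroidal triple, so the asteroidal-triple theorem does not dispose of them, and they must be handled by a genuinely characteristic-$2$ Fedder argument that works uniformly in $n$. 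For these, $J_H^{[2]}:J_H$ does not collapse to anything one can parametrize by multidegree: the quotient is governed by intersections of the colon ideals $P^{[2]}:J_H$ over all minimal primes $P$ of $J_H$, and tracking candidate witnesses through those intersections is the real content. The paper accomplishes this via a multi-stage apparatus (Sections~\ref{sec:computation_colon_ideals}--\ref{sec:co_regular_not_f_pure}): explicit colon computations for the minimal primes (Theorem~\ref{thrm:colon_minl_prime}), ``cycling'' lemmas that propagate support constraints around the graph, and a reduction step (Proposition~\ref{prop:reduction_step_fedder_computation}) that reduces the infinite family to a single bounded computation checked in Macaulay2. Your sketch, as written, does not anticipate that the bulk of the proof lives here, and ``show every witness contains some variable to degree exactly~1'' is not a substitute for that machinery; without it, the converse for the co-regular families is unproven.
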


Our new observation is that for binomial edge ideals the property of being F-pure descends from a graph to its induced subgraphs (Lemma \ref{lem:F_split_descends_to_induced_subgraph}). Thus, if an induced subgraph does not define an F-pure ideal, then neither does the original graph. A celebrated theorem of Gallai (Theorem \ref{thm:gallai}) together with an observation of Matsuda (Theorem \ref{prop:reduction_matsuda_to_finite_list}) shows that any graph which is not weakly closed must contain an induced subgraph belonging to: (1) one of ten sporadic graphs, (2) one of four ``regular" families\footnote{We use the term regular and co-regular here informally to indicate that the graphs in these families possess a high degree of regularity - not that the graphs themselves are regular or co-regular graphs.}, or (3) one of four ``co-regular" families. Hence to prove Matsuda's conjecture it suffices to show that every subgraph in this list does not define an F-pure binomial edge ideal (Proposition \ref{prop:reduction_matsuda_to_finite_list}). 

Matsuda observed that the 5-cycle $C_5$, which is not weakly closed, defines a non-F-pure binomial edge ideal in characteristic $2$; however, in characteristics $3,5,$ and $7$, $R_{C_5}$ is F-pure.  Perhaps based on this evidence, Matsuda also made the following conjecture.

\begin{conjx}[Matsuda {\cite[Conjecture 2.9]{matsuda2018weakly}}]
\label{conj:bei_are_eventually_F_pure}
Let $G$ be a graph, and let $p$ denote the characteristic of the base field. Then $R_{G}$ is F-pure for all sufficiently large $p > 0$.
\end{conjx}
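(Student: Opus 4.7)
My plan is to disprove Conjecture B by exhibiting a single graph $G$ for which $R_G$ fails to be F-pure in every positive characteristic. By the descent principle in Lemma \ref{lem:F_split_descends_to_induced_subgraph}, any such counterexample automatically yields an infinite family of counterexamples, namely all graphs containing $G$ as an induced subgraph. The natural candidate is the \emph{net}: the six-vertex graph obtained from a triangle by attaching a pendant edge at each vertex. Its three degree-$1$ vertices form an asteroidal triple, and the net is a classical minimal obstruction to AT-freeness, so success on the net should propagate via Lemma \ref{lem:F_split_descends_to_induced_subgraph} to every graph carrying an asteroidal triple.

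The main tool is Fedder's criterion, which reformulates F-purity at the homogeneous maximal ideal $\mathfrak{m}$ as the non-containment $(J_G^{[p]} : J_G) \not\subset \mathfrak{m}^{[p]}$. Hence to prove that the net $N$ defines a non-F-pure quotient in every positive characteristic, it suffices to establish the uniform containment $(J_N^{[p]} : J_N) \subset \mathfrak{m}^{[p]}$ for every prime $p$.

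The steps I would carry out are: (i) compute $(J_N^{[p]} : J_N)$ modulo $\mathfrak{m}^{[p]}$ for small primes using computer algebra, both to confirm the predicted failure and to read off the shape of candidate Fedder elements; (ii) exploit the natural $\Z^n$-multigrading $\deg(x_i) = \deg(y_i) = e_i$, under which $J_N$ is homogeneous, to decompose the colon ideal into multigraded pieces and restrict the multidegrees in which a Fedder element could live; (iii) use the primary decomposition of $J_N$, whose minimal primes are governed by the cut-point sets of $N$, together with the multiplicative constraint of landing inside $J_N^{[p]}$, to further prune candidate generators; (iv) argue that the three independent pendant arms of the net impose incompatible divisibility requirements that force some variable of any candidate generator to appear to a power at least $p$, placing it in $\mathfrak{m}^{[p]}$.

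The main obstacle will be step (iv), which is a non-existence statement that must hold uniformly in $p$. The squarefree-initial-ideal and Knutson-ideal techniques that deliver F-purity in the weakly closed case are unavailable here, since the net is not weakly closed and in particular not Knutson. I anticipate the crux will be a combinatorial lemma showing that the mutually far pendants of the asteroidal triple produce a parity- or divisibility-type obstruction that a single element of the relevant multidegree cannot absorb; once this is established for the net, Lemma \ref{lem:F_split_descends_to_induced_subgraph} propagates the conclusion to every graph containing an asteroidal triple, giving a decisive refutation of Conjecture B.
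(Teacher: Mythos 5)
Your overall plan---Fedder's criterion, a small counterexample graph, propagation via Lemma~\ref{lem:F_split_descends_to_induced_subgraph}---matches the paper's, and the net (the graph $XF_2^1$ of Figure~\ref{fig:reg_graphs}) is exactly the right candidate. But two things are off. The central gap is step (iv): you hope for a ``parity- or divisibility-type obstruction'' but never identify it, and this is the entire content of the result. The obstruction is a specific Lucas-theorem vanishing. In the paper's argument, for each pendant vertex $v_i$ the colon against the minimal prime $P(N(v_i),\cdot)$ is trapped inside $\eta^{[p]}+(f_{v_j,v_k})^{p-1}$ (Lemma~\ref{lem:one:colon}, where $\eta$ is the ideal of variables off $\{v_1,v_2,v_3\}$); intersecting these three containments via the regular-sequence computation of Proposition~\ref{prop:reg:sequence} forces $J_G^{[p]}:J_G\subset\eta^{[p]}+(f_{v_1,v_2}f_{v_1,v_3}f_{v_2,v_3})^{p-1}$; and the decisive step is Lemma~\ref{lem:Lucas}, which shows $(f_{1,2}f_{2,3}f_{1,3})^{p-1}\in\mathfrak{m}^{[p]}$ because the one surviving coefficient, $\binom{2p-2}{p-1}$, vanishes mod $p$ by Lucas's theorem. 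Your multigrading and primary-decomposition preliminaries (ii)--(iii) are headed in this direction, but without the explicit identity step (iv) remains an assertion, not a proof.

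Secondly, your propagation claim overreaches. Lemma~\ref{lem:F_split_descends_to_induced_subgraph} transports the failure of F-purity from the net only to graphs that contain the net as an \emph{induced subgraph}, not to all graphs with an asteroidal triple; for instance $C_6$ has an asteroidal triple but, being triangle-free, contains no induced net. Proving the net case by itself does refute Conjecture~\ref{conj:bei_are_eventually_F_pure}, which is all this particular statement asks for, but the stronger Theorem~\ref{thm:intro_non_f_purity} requires a structural theorem (the paper's Theorem~\ref{thm:non_F_purity}) whose hypotheses one then verifies for the entire forbidden-subgraph list of Theorem~\ref{thm:forbidden_subgraph_AT_free}, not just for the net.
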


\noindent Our second main result shows that Conjecture~\ref{conj:bei_are_eventually_F_pure} is false in a very strong way.

\begin{theoremx}[Theorem \ref{thm:non_F_purity}, Corollary \ref{cor:at_triple_implies_non_f_purity}]
\label{thm:intro_non_f_purity}
If $G$ contains an asteroidal triple, then $R_{G}$ is not F-pure in any positive characteristic.  In particular, Conjecture~\ref{conj:bei_are_eventually_F_pure} is false.
\end{theoremx}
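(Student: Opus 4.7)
The plan is to combine Fedder's criterion with the induced-subgraph descent of Lemma \ref{lem:F_split_descends_to_induced_subgraph}, reducing the problem to verifying non-F-purity for a short list of representative subgraphs. Since F-purity of a binomial edge ideal descends to induced subgraphs, it suffices to exhibit, for every graph $G$ containing an asteroidal triple, a single induced subgraph $H$ of $G$ for which $R_H$ fails to be F-pure in every positive characteristic.

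The first step is a combinatorial reduction. Given an AT $\{u,v,w\}$ in $G$ with minimal witnessing paths $P_{uv}, P_{uw}, P_{vw}$, the subgraph $H$ induced on $\{u,v,w\} \cup V(P_{uv}) \cup V(P_{uw}) \cup V(P_{vw})$ is small and falls into a short list of basic AT configurations: the net (triangle with a pendant at each vertex), subdivided variants of the net, and long chordless cycles $C_n$ for $n \geq 6$. By the descent lemma, the theorem reduces to proving non-F-purity of $R_H$ for one representative of each such type. The net is the main test case, and the long cycles $C_n$ must be treated separately since they are not net-containing.

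The technical core is Fedder's criterion: $R_H = R/J_H$ is F-pure iff $(J_H^{[p]} : J_H) \not\subset \mathfrak{m}^{[p]}$, where $\mathfrak{m}$ is the homogeneous maximal ideal. For each representative $H$, I would enumerate a small set of candidate generators for $J_H^{[p]} : J_H$ (modulo $\mathfrak{m}^{[p]}$), arising as products of the edge binomials $f_e$ and their Frobenius powers along AT witness paths, and verify that each such candidate lies in $\mathfrak{m}^{[p]}$. The structural obstruction is that the AT enforces three mutually incompatible demands on any putative multiplier in the colon ideal: relations arising from each of the three witnessing paths must simultaneously hold modulo $J_H^{[p]}$, but because the paths pairwise avoid one another's neighborhoods, the variable supports of these relations overlap in such a way that every variable must appear with exponent at least $p$.

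The main obstacle will be making the Fedder computation uniform in $p$, rather than running it prime-by-prime. One approach is to analyze the graded structure of $(J_H^{[p]} : J_H)/\mathfrak{m}^{[p]}$ directly, identifying the leading terms of colon-ideal elements and showing they are already $p$-th powers by a path-counting argument on the AT witnesses. An alternative is to argue by contradiction: assume a splitting $\varphi \colon F_*(R_H) \to R_H$ exists for some $p$; restrict its image along each AT-endpoint pendant (or axis of the long cycle) to extract three algebraic conditions on $\varphi(1)$ coming from the three witnessing paths; then show these three conditions are jointly unsatisfiable by any element of $R_H$, independent of $p$. Once non-F-purity is established for each basic AT configuration, Theorem \ref{thm:intro_non_f_purity} follows from the descent lemma, and the net gives the explicit counterexample refuting Conjecture~\ref{conj:bei_are_eventually_F_pure}.
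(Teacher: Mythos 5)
Your high-level strategy---descend to induced subgraphs via Lemma~\ref{lem:F_split_descends_to_induced_subgraph}, then invoke Fedder's criterion---matches the paper's, but the proposal leaves the two central technical points unresolved, and the reduction step as stated is not sound.

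On the reduction: you propose restricting $G$ to the three AT vertices together with their minimal witnessing paths and asserting that this induced subgraph falls into a short list (the net, subdivided nets, long cycles). This is not true as stated: an induced subgraph picks up all edges among the chosen vertices, and the internal vertices of the three witnessing paths may be adjacent to one another in many ways, producing a large zoo of configurations. The paper instead invokes the known forbidden-induced-subgraph characterization of AT-free graphs (Theorem~\ref{thm:forbidden_subgraph_AT_free}), which already gives a concrete finite-plus-four-families list, and then verifies that every graph on that list satisfies the hypotheses of the key Theorem~\ref{thm:non_F_purity}. If you want to avoid that citation you would have to prove an analogous classification yourself.

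More importantly, you correctly flag the main obstacle---making the Fedder computation uniform in $p$---but you do not actually propose a mechanism that works, only two vague possibilities (a ``path-counting argument'' or an unsatisfiability-of-three-conditions argument). The paper's resolution is quite specific and you would need to find it. First, one needs the right structural hypothesis: each AT vertex $v_i$ has $N(v_i)$ equal to a \emph{minimal vertex isolator}, giving a minimal prime $\mathfrak{p}_i = P(N(v_i), G\setminus N(v_i))$ of $J_G$. Second, a colon computation (Lemma~\ref{lem:one:colon}) shows $\mathfrak{p}_i^{[p]}:J_G \subseteq \eta^{[p]} + (f_{v_j,v_k})^{p-1}$, and intersecting over $i=1,2,3$ while exploiting that the $f_{v_j,v_k}$ together with the variable pairs form a regular sequence (Proposition~\ref{prop:reg:sequence}) gives $J_G^{[p]}:J_G \subseteq \eta^{[p]} + (f_{v_1,v_2}f_{v_1,v_3}f_{v_2,v_3})^{p-1}$. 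Third---and this is the arithmetic heart you would need to discover---one must show that $(f_{1,2}f_{2,3}f_{1,3})^{p-1} \in \mathfrak{m}^{[p]}$ for every prime $p$. The paper does this by first reducing $(f_{a,b}f_{b,c})^{p-1}$ to $(x_b y_b f_{a,c})^{p-1}$ modulo $(x_b^p, y_b^p)$ using the fact that $\binom{p-1}{i} \equiv (-1)^i \pmod p$, and then applying Lucas' theorem to conclude $\binom{2p-2}{p-1} \equiv 0 \pmod p$. Without this specific computation your argument would not close, and nothing in the proposal hints at it.
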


\noindent It follows from Theorem~\ref{thm:intro_non_f_purity} that every sporadic graph and every graph among the regular families appearing in Gallai's list of forbidden subgraphs in Theorem~\ref{thm:gallai} defines a quotient ring that is not F-pure in any positive characteristic. Moreover, Theorem \ref{thm:intro_non_f_purity} exhibits a large family of combinatorially defined, standard graded algebras that are F-injective in every characteristic, non-normal (and hence not F-rational either), and not F-pure in any characteristic. In fact, $R_G$ is normal if and only if $R_G$ is a domain if and only if $G$ is a disjoint union of complete graphs. 
 Some members of the family are Cohen--Macaulay, like the net graph $XF_2^1$ in Figure~\ref{fig:reg_graphs}(B), while others are not, like the big claw $T_2$ in Figure~\ref{fig:finite_AT_graphs}(A). See \cite{fedder1983} and \cite{kurano2009multigraded} for further results in this direction. Finally, Theorems \ref{thm:matsudas_conjecture_is_true} and \ref{thm:intro_non_f_purity} completely characterize F-purity of $R_G$ when $G$ is chordal (Corollary \ref{cor:char_F_pure_bei}).

The rest of the paper is structured as follows.  In Sections \ref{sec:background} and \ref{sec:background_F_pure_bei}, we review the relevant background for this paper. In Section \ref{sec:non_f_pure_bei}, we prove Theorem \ref{thm:intro_non_f_purity}. In Section \ref{sec:Matsuda_conj}, we introduce Gallai's theorem and its application to Matsuda's weakly closed conjecture. In Sections \ref{sec:computation_colon_ideals}, \ref{sec:cycling}, \ref{sec:reduction_step_fedder_criterion}, and \ref{sec:co_regular_not_f_pure}, we introduce the necessary tools and prove Theorem \ref{thm:matsudas_conjecture_is_true}. In Section \ref{sec:application_matsuda_theorem}, we provide applications of Matsuda's weakly closed theorem to the study of binomial edge ideals. We prove that every unmixed graph of K\"onig type is weakly closed (Corollary \ref{cor:unmixed_Konig_type_are_wc}), and we prove that weakly closed graphs are closed under vertex completion and vertex deletion (Proposition \ref{prop:wc_preserved_under_completion_deletion}). In Section \ref{sec:further_questions}, we conclude with further questions raised by this paper. In Section \ref{sec:M2_computations}, we include a link to the Github repository containing the Macaulay2 computations utilized in this paper.

\section{Background}
\label{sec:background}

\subsection{F-Pure and F-Split Rings}

Let $R$ be a ring of characteristic $p$ for some prime $p > 0$. We denote by $F : R \ra R$ the Frobenius ring endomorphism sending $r$ to $r^{p}$. We will denote the codomain by $F_{\ast}R$. The module $F_{\ast}R$ has an $R$-module structure coming from Frobenius given by $r \cdot F_{\ast} s := F_{\ast} (r^{p}s)$, and the map $F : R \ra F_{\ast}R : r \bra F_{\ast} r^{p}$ is $R$-linear. 

\bdefn
\label{defn:F_pure}
The ring $R$ is \textbf{F-pure} if the map of $R$-modules $F : R \ra F_{\ast}R$ is pure, i.e., for every $R$-module $M$, the map
\begin{align*}
    M \ra M \tensor_{R} F_{\ast} R
\end{align*}
is injective.
\edefn 

An important criterion for testing whether a ring is F-pure is given by Fedder's criterion. 

\bthm[{\cite[Theorem 1.12]{fedder1983}}]
\label{thm:fedder_criterion}
Let $S = \k[x_{1},\ldots,x_{n}]$ be a standard graded polynomial ring with $\fchar \k = p > 0$, let $\m := (x_{1},\ldots,x_{n})$ be the graded maximal ideal, and let $I \subset S$ be a graded ideal. Then $S/I$ is F-pure if and only if
\begin{align*}
    I^{[p]} : I \not\subset \m^{[p]},
\end{align*}
where $I^{[p]} := ( \{f^{p} \mid f \in I\} )$.
\ethm 

A related notion is the F-split property. 
\bdefn
\label{defn:f_split}
The ring $R$ is \textbf{F-split} if there exists an $R$-linear map $\varphi : F_{\ast}R \ra R$ such that $\varphi \circ F = \id_{R}$, where $F : R \ra F_{\ast}R$ is the Frobenius map.
\edefn 

It is well-known that if $R$ is F-split, then $R$ is F-pure. In general, it need not be the case that F-pure rings are F-split; see e.g. \cite[Proposition 3.71]{Rotman09}. However, when $R$ is \textbf{F-finite}, i.e., $F_{\ast}R$ is a finite $R$-module, then F-pure rings are F-split. The property of a ring being F-finite is preserved under adjoining variables, passing to a quotient ring, and localizing at a multiplicative subset. Hence, if $\k$ is a perfect field, then the ring $\k[x_{1},\ldots,x_{n}]/I$ is F-finite for any ideal $I \subset \k[x_{1},\ldots,x_{n}]$. We refer the reader to \cite{ma2021f} for further information regarding the terms defined here.

\subsection{Binomial Edge Ideals}

Let $n$ be a positive integer, and let $[n]$ denote the set of integers $1$ through $n$, inclusive. Let $G$ be a graph on the vertices $[n]$, and let $E$ be the set of edges of $G$. All graphs considered here are simple, meaning they do not have loops or repeated edges. To any such graph, we associate the \textbf{binomial edge ideal}
\begin{align*}
J_{G} := \left( \{ f_{i,j} \mid \{i,j\} \in E \} \right) \subset R := \k[x_{1},\ldots,x_{n},y_{1},\ldots,y_{n}],
\end{align*}
where $\k$ is any field and $f_{i,j} := x_{i}y_{j} - x_{j}y_{i}$ for $1 \leq i < j \leq n$. We denote by $R_{G}$ the quotient ring $R/J_{G}$. In the influential paper by Herzog, Hibi, Hreinsd\'{o}ttir, Kahle, and Rauh, the authors showed that $J_{G}$ is radical \cite[Corollary 2.2]{herzog2010binomial} and provided a combinatorial characterization of its minimal primes.
\bprop[{\cite[Lemma 3.1, Theorem 3.2, Corollary 3.9]{herzog2010binomial}}]
\label{prop:min_primes_binomial_edge_ideal}
Let $G$ be a connected simple graph on $[n]$. For a subset $S \subset [n]$, define 
\begin{align*}
    P_{G}(S) := (\bigcup_{i \in S} \{x_{i},y_{i}\},J_{\tilde{G}_{1}},\ldots,J_{\tilde{G}_{c(S)}}),
\end{align*}
where $G_{1},\ldots,G_{c(S)}$ denote the connected components of $G \setminus S$, and $\tilde{G}_{i}$ denotes the complete graph on the vertex set $V(G_{i})$. Then,
\begin{enumerate}
    \item $P_{G}(S)$ is a prime ideal,
    \item $\hgt (P_{G}(S)) = 2 \#S + (n - c(S))$,
    \item $J_{G} = \bigcap_{S \subset [n]} P_{G}(S)$,
    \item $P_{G}(S)$ is a minimal prime of $J_{G}$ if and only if $S = \varnothing$, or $S \neq \varnothing$ and for each $i \in S$ one has $c(S\setminus \{i\}) < c(S)$. \label{item:min_primes_3}
\end{enumerate}
\eprop

\bdefn
With notation as in Proposition \ref{prop:min_primes_binomial_edge_ideal}, we say that $S$ is a \textbf{cut set} whenever $S$ satisfies condition \eqref{item:min_primes_3}.
\edefn

We note that $R$ carries two different gradings under which $J_{G}$ is a graded ideal. First, $J_{G}$ is a graded ideal when $R$ is standard graded, i.e., $\deg(x_{i}) = \deg(y_{i}) = 1$. Second, $J_{G}$ is a graded ideal when $R$ is $\N^{n}$ graded, where $\deg(x_{i}) = \deg(y_{i}) = e_{i}$ for $1 \leq i \leq n$, and $e_{i}$ is the $i$-th standard basis element of $\N^{n}$. 

We next recall several useful graph constructions that will be relevant for our purposes. We define the neighborhood of a vertex $v$ of $G$, denoted $N_{G}(v)$, as follows:
\begin{align*}
    N_{G}(v) := \{u \in V(G) \mid \{u,v\} \in E(G) \}.
\end{align*}
For a vertex $v$ of $G$, we define the \textbf{deletion of $G$ at $v$}, denoted $G \setminus v$, by $$V(G \setminus \{v\}) := V(G)\setminus \{v\}$$ and $$E(G \setminus v) := E(G) \setminus \{ \{u,v\} \mid u \in N_{G}(v) \}.$$ For a vertex $v$ of $G$, we define the \textbf{completion of $G$ at $v$}, denoted $G_{v}$, by $$V(G_{v}) := V(G)$$ and $$E(G_{v}) := E(G) \cup \{ \{a,b \} \mid a,b \in N_{G}(v) \text{ and } \{a,b\} \notin E(G) \}.$$ 
For graphs $H$ and $G$, we say that $H$ is an \textbf{induced subgraph} of $G$ if:
\begin{enumerate}
    \item $V(H) \subset V(G)$, 
    \item $E(H) \subset E(G)$, and 
    \item whenever $a \in V(H)$, $b \in V(H)$, and $\{a,b\} \in E(G)$, then $\{a,b\} \in E(H)$. 
\end{enumerate}
For set $A \subseteq V(G)$, we write $G[A]$ for the induced subgraph of $G$ on vertex set $A$.  Finally, given a graph $G$, we define the \textbf{complement} of $G$, denoted $\ol{G}$, as the graph determined by $$V(\ol{G}) := V(G)$$ and $$E(\ol{G}) := \{ \{i,j\} \mid \{i,j\} \notin E(G) \}.$$

\section{F-Purity of Binomial Edge Ideals}\label{sec:background_F_pure_bei}

In this section, we gather some relevant facts on the F-purity of binomial edge ideals that will be relevant for the remainder of this paper. 

The first result shows that F-purity of a binomial edge ideal over a field of characteristic $p > 0$ can be verified over the field $\Z/p\Z$.

\blem 
\label{lem:reduction_to_Z_mod_2}
Let $\kk = \Z/p\Z$, and let $\K$ be any field of characteristic $p > 0$. Let $R := \k[x_{1},\ldots,x_{n},y_{1},\ldots,y_{n}]$ and $S := \K[x_{1},\ldots,x_{n},y_{1},\ldots,y_{n}]$. Let $G$ be a graph on $n$ vertices, and let $J_{G} \subset R$ the binomial edge ideal. Then, $R/J_{G}$ is F-pure if and only if $S/J_{G}S$ is F-pure.
\elem 

\bproof
By Fedder's criterion (Theorem \ref{thm:fedder_criterion}), we have that $R/J_{G}$ is F-pure if and only if 
\begin{equation}
    \label{eqn:lem:reduction_to_Z_mod_2_1}
    J_{G}^{[p]} :_{R} J_{G} \not\subset \m^{[p]}.
\end{equation}
Since the map $\k \hra \K$ is faithfully flat and $S \iso R \tensor_{\k} \K$, it follows that equation \eqref{eqn:lem:reduction_to_Z_mod_2_1} is equivalent to 
\begin{equation}
    \label{eqn:lem:reduction_to_Z_mod_2_2}
    (J_{G}S)^{[p]} :_{S} J_{G}S \not\subset (\m S)^{[p]}.
\end{equation}
By applying Fedder's criterion (Theorem \ref{thm:fedder_criterion}), equation \eqref{eqn:lem:reduction_to_Z_mod_2_2} is equivalent to $S/J_{G}S$ being F-pure. The result now follows by observing that $J_{G}S$ is precisely the binomial edge ideal of $G$ defined over the ring $S$.
\eproof

Henceforth, we will always assume that the base field is $\Z/p\Z$ unless specified otherwise. The next lemma shows that the property of the binomial edge ideal being F-pure descends along induced subgraphs.

\blem
\label{lem:F_split_descends_to_induced_subgraph}
Let $G$ be a graph, and let $H$ be an induced subgraph of $G$. If $R_{G}$ is F-pure, then $R_{H}$ is F-pure.
\elem

\bproof
Since $\k = \Z/p\Z$ is F-finite, the property of $R_{G}$ or $R_{H}$ being F-pure is equivalent to $R_{G}$ or $R_{H}$ being F-split. Thus, we may suppose that $R_{G}$ is F-split, and it suffices to show that $R_{H}$ is F-split. Since $H$ is an induced subgraph of $G$, we have that $R_{H}$ is a $\k$-algebra retract of $R_{G}$; in other words, we have that $R_{H} \subset R_{G}$ as $\k$-algebras and that there is a map $\epsilon : R_{G} \ra R_{H}$ which is the identity on $R_{H}$. Because $R_{G}$ is F-split, there is a map $\phi : F_{\ast} R_{G} \ra R_{G}$ which splits the Frobenius map $F : R_{G} \ra F_{\ast} R_{G}$. Thus we have the following commutative diagram.
\begin{center}
\begin{tikzcd}[row sep=huge, column sep=huge, text height=1.5ex, text depth=0.25ex]
R_{H} \arrow[hook,r,swap, "\iota"] \arrow[d,"F"] & \arrow[bend left=-30, swap, l,"\epsilon"] R_{G} \arrow[d,swap,"F"] \\
F_{\ast} R_{H} \arrow[hook,r,"F_{\ast} \iota"] & \arrow[bend left=-30, swap, u,"\phi"] F_{\ast} R_{G} 
\end{tikzcd}
\end{center}
We claim that the composition of maps
\begin{align*}
    \epsilon \circ \phi \circ F_{\ast}\iota : F_{\ast} R_{H} \ra R_{H}
\end{align*}
splits the map $F : R_{H} \ra F_{\ast} R_{H}$. Indeed, we have that for $r \in R_{H}$ that
\begin{align*}
    \epsilon \circ \phi \circ F_{\ast}\iota (F_{\ast} r^{p}) 
    &= \epsilon \circ \phi \circ F_{\ast} (\iota (r)^{p}) \\
    &= \epsilon \circ (\iota (r)) \\
    &= r.
\end{align*}
We note that each of the maps $F_{\ast}\iota$, $\phi$, and $\epsilon$ are $R_{H}$-linear, and hence their composition is also $R_{H}$-linear.
\eproof

The next result establishes that F-purity of binomial edge ideals is preserved under vertex completion and deletion.

\blem
\label{lem:F_pure_preserved_vertex_deletion_completion}
Let $G$ be a graph, and let $v \in V(G)$. If $R_{G}$ is F-pure, then the graphs $G \setminus v$, $G_{v}$, and $G_{v} \setminus v$ realize F-pure binomial edge ideals.
\elem

\bproof
First, we show that $R_{G\setminus v}$ is F-pure. Lemma \ref{lem:F_split_descends_to_induced_subgraph} implies that $G \setminus v$ defines an F-pure binomial edge ideal since $G \setminus v$ is an induced subgraph of $G$. 

Next, we show that $R_{G_{v}}$ is F-pure. By Fedder's criterion (Theorem \ref{thm:fedder_criterion}), it suffices to show that 
\begin{equation}
    \label{eqn:F_pure_preserved_vertex_deletion_completion_0}
    J_{G_{v}}^{[p]}:J_{G_{v}} \not \subset m^{[p]}.
\end{equation}
We will show that 
\begin{equation}
\label{eqn:F_pure_preserved_vertex_deletion_completion_1} 
J_{G}^{[p]}:J_{G} \subset J_{G_{v}}^{[p]}:J_{G_{v}}.
\end{equation}
Since $R_{G}$ is F-pure, we have that 
\begin{align*}
    J_{G}^{[p]}:J_{G} \not \subset \m^{[p]}.
\end{align*}
Hence, equation \eqref{eqn:F_pure_preserved_vertex_deletion_completion_1} will imply equation \eqref{eqn:F_pure_preserved_vertex_deletion_completion_0}, which will complete the proof.

Let $r \in J_{G}^{[p]}:J_{G}$, and let $\{i,j\} \in E(G_{v})$. If $\{i,j\} \in E(G)$, then we have that 
\begin{align*}
    r \cdot f_{ij} \in J_{G}^{[p]} \subset J_{G_{v}}^{[p]}.
\end{align*}
Suppose that $\{i,j\} \in E(G_{v}) \setminus E(G)$. Then, $i,j \in N_{G}(v)$. By Pl\"{u}cker's relations (see e.g., \cite{bruns2022determinants}), we have that $x_{v} \cdot f_{ij} = \pm x_{i} \cdot f_{vj} \mp x_{j} \cdot f_{vi}$. Because $\{v,j\}$ and $\{v,i\}$ are edges of $G$, it follows from the previous case that 
\begin{align*}
    r \cdot x_{v} \cdot f_{ij} \in J_{G_{v}}^{[p]}.
\end{align*}
Observe that
\begin{align*}
    \Ass_{R}(R/J_{G_{v}}^{[p]}) = \Ass_{R}(R/J_{G_{v}})  = \{ P_{G}(S) \mid v \notin S \}
\end{align*}
by \cite{peskine1973dimension} and {\cite[Lemma 4.5]{bolognini2022cohen}}. Proposition \ref{prop:min_primes_binomial_edge_ideal} implies that $x_{v}$ is a non-zerodivisor on $R/J_{G_{v}}^{[p]}$. Consequently, $r \cdot f_{ij} \in J_{G_{v}}^{[p]}$. This proves that $r \in J_{G_{v}}^{[p]}:J_{G_{v}}$, and thus establishes equation \eqref{eqn:F_pure_preserved_vertex_deletion_completion_1}, which completes the proof.

Finally, we observe that the above two cases imply that $G_{v} \setminus v$ defines an F-pure binomial edge ideal.
\eproof

\section{Non-F-Pure Binomial Edge Ideals} \label{sec:non_f_pure_bei}


In this section, we exhibit a large family of graphs whose binomial edge ideal do not realize an F-pure quotient ring for any positive characteristic $p > 0$. Thus, we answer Matsuda's Conjecture \ref{conj:bei_are_eventually_F_pure} in the negative.  

We begin with some preparatory results.

\bthm[Lucas's Theorem {\cite{lucas1878theorie}}]
\label{thm_Lucas}
Let $p$ be a prime integer and $m$ and $n$ positive integers. Choose a positive integer $e$ so that we can write $m = \sum_{i=0}^{e} m_{i}p^{i}$ and $n = \sum_{i=0}^{e} n_{i} p^{i}$ where $0 \leq m_{i}, n_{i} < p$ for $0 \leq i \leq e$. Then
\begin{align*}
\binom{n}{m} \equiv \prod_{i=0}^{e} \binom{n_{i}}{m_{i}} \mod{p},
\end{align*}
with the convention that $\binom{a}{b} = 0$ if $a < b$.
\ethm

\noindent For a modern treatment of Lucas's Theorem, see \cite[Theorem 1]{Fine47}.

\begin{lemma}
\label{lem:binom_coeff_mod_p}
Let $p$ be a prime. For all $0 \leq i \leq p-1$, we have that
\begin{align*}
    \binom{p-1}{i} \equiv (-1)^i \mod{p}.
\end{align*}

\end{lemma}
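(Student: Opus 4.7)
The plan is to give a short direct computation, noting that Lucas's theorem (just stated above) does not reduce the problem further since $i < p$ means $i$ has only one base-$p$ digit and $p-1$ likewise has only the digit $p-1$, so Lucas's formula collapses to the tautology $\binom{p-1}{i} \equiv \binom{p-1}{i}$. Instead, I would start from the closed form
\[
\binom{p-1}{i} \;=\; \frac{(p-1)(p-2)\cdots(p-i)}{i!} \;=\; \frac{\prod_{k=1}^{i}(p-k)}{i!},
\]
which is well-defined modulo $p$ because $i < p$ forces $i!$ to be a unit in $\mathbb{Z}/p\mathbb{Z}$.

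Next, I would reduce the numerator factor-by-factor: for each $1 \le k \le i$, $p - k \equiv -k \pmod{p}$, so
\[
\prod_{k=1}^{i}(p-k) \;\equiv\; \prod_{k=1}^{i}(-k) \;=\; (-1)^{i}\, i! \pmod{p}.
\]
Dividing by $i!$ (that is, multiplying by its inverse mod $p$) gives $\binom{p-1}{i} \equiv (-1)^i \pmod p$, which is the claim. The edge case $i = 0$ is the empty product convention $\binom{p-1}{0} = 1 = (-1)^0$.

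As an alternative approach in case a more conceptual argument is preferred, I could run a short induction on $i$: the base case $i = 0$ is immediate, and for $1 \le i \le p-1$, Pascal's identity gives $\binom{p}{i} = \binom{p-1}{i} + \binom{p-1}{i-1}$. Since $p \mid \binom{p}{i}$ for $1 \le i \le p-1$, we get $\binom{p-1}{i} \equiv -\binom{p-1}{i-1} \pmod{p}$, and the induction hypothesis yields $\binom{p-1}{i} \equiv (-1)^i \pmod{p}$.

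There is no real obstacle here; the only mild point to check is that $i!$ is invertible mod $p$, which is exactly the hypothesis $i \le p-1$. This lemma is a standard ingredient and will likely be used later in the paper in conjunction with Fedder's criterion (to identify nonzero coefficients of specific monomials after raising to the $p$-th power), which is why the authors record it immediately after Lucas's theorem.
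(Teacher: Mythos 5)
Your proof is correct and complete. The paper itself does not present an argument for this lemma---it simply remarks that the statement is an elementary number-theoretic exercise and cites an external write-up---so there is no in-paper proof to compare against. Both of your arguments are standard and sound: the direct factor-by-factor reduction $p-k\equiv -k\pmod p$ together with the invertibility of $i!$ for $i\le p-1$ gives the result in one line, and the Pascal-identity induction using $p\mid\binom{p}{i}$ for $1\le i\le p-1$ is an equally clean alternative. Your observation that Lucas's theorem is vacuous here (since $i$ and $p-1$ are already single base-$p$ digits) is also accurate and worth noting, since the lemma is stated immediately after Lucas's theorem and one might otherwise expect it to be a corollary.
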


\begin{proof}
The proof is an exercise in elementary number theory. A write-up can be found in \cite{binomialIdentity}.
\end{proof}

\begin{lemma}
\label{lem:switching_terms}
Let $p$ be a positive prime number. If $a$, $b$, and $c$ are distinct positive integers, then
\begin{align*}
(f_{a,b}f_{b,c})^{p-1} \equiv (x_by_bf_{a,c})^{p-1}    \mod{(x_{b}^{p}, y_{b}^{p})}.
\end{align*}
\end{lemma}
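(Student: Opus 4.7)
The plan is to expand $f_{a,b}f_{b,c}$ explicitly, use the trinomial theorem modulo $(x_b^p, y_b^p)$, and then verify a purely combinatorial identity modulo $p$. First, I would compute directly
\[
f_{a,b}f_{b,c} = x_by_b(x_ay_c + x_cy_a) - x_ax_c \, y_b^2 - y_ay_c \, x_b^2,
\]
which isolates a ``cross'' term divisible by $x_by_b$ together with two ``pure'' terms divisible by $y_b^2$ and $x_b^2$. Setting $\alpha := x_ay_c$ and $\beta := x_cy_a$, we have $(x_by_b f_{a,c})^{p-1} = x_b^{p-1}y_b^{p-1}(\alpha - \beta)^{p-1}$, so the target congruence becomes a statement about two polynomials in $\alpha$ and $\beta$ (times $x_b^{p-1}y_b^{p-1}$).

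Next, I would apply the multinomial theorem to $(f_{a,b}f_{b,c})^{p-1}$. With $A := x_by_b(\alpha + \beta)$, $B := -x_ax_c \, y_b^2$, and $C := -y_ay_c \, x_b^2$, each term $\binom{p-1}{i,j,k}A^iB^jC^k$ contributes a factor $x_b^{i+2k}y_b^{i+2j}$. For such a term to survive modulo $(x_b^p, y_b^p)$, both $i + 2k \leq p-1$ and $i + 2j \leq p-1$ must hold; combined with $i + j + k = p-1$, these inequalities force $k \leq j$ and $j \leq k$, hence $j = k$. Since $(x_ax_c)(y_ay_c) = \alpha\beta$, the surviving contribution is
\[
(f_{a,b}f_{b,c})^{p-1} \equiv x_b^{p-1}y_b^{p-1}\sum_{j=0}^{\lfloor(p-1)/2\rfloor} \binom{p-1}{p-1-2j,\, j,\, j}(\alpha\beta)^j(\alpha+\beta)^{p-1-2j} \mod{(x_b^p, y_b^p)}.
\]

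The residual task is the polynomial identity
\[
\sum_j \binom{p-1}{p-1-2j,\, j,\, j}(\alpha\beta)^j(\alpha+\beta)^{p-1-2j} \equiv (\alpha-\beta)^{p-1} \mod{p}
\]
in $\Z[\alpha, \beta]$. I would prove this by matching coefficients of $\alpha^i\beta^{p-1-i}$. Expanding $(\alpha+\beta)^{p-1-2j}$ binomially, the coefficient on the left equals $\sum_j \frac{(p-1)!}{j!\,j!\,(i-j)!\,(p-1-i-j)!} = \binom{p-1}{i}\sum_j \binom{i}{j}\binom{p-1-i}{j}$, and Vandermonde's identity collapses the inner sum to $\binom{p-1}{i}$, giving $\binom{p-1}{i}^2 \equiv 1 \mod{p}$ by Lemma \ref{lem:binom_coeff_mod_p}. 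The coefficient on the right is $(-1)^{p-1-i}\binom{p-1}{i} \equiv (-1)^{p-1} \equiv 1 \mod{p}$ by the same lemma. The coefficients agree, which finishes the argument.

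The step I expect to be the crux is the combinatorial observation that reducing modulo $(x_b^p, y_b^p)$ forces $j = k$ in the trinomial expansion, which cleanly reduces the problem to a two-variable identity. Everything after that is standard binomial-coefficient bookkeeping, powered by Vandermonde and Lemma \ref{lem:binom_coeff_mod_p}.
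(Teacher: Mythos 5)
Your proof is correct, and it takes a route that is related to but genuinely different from the paper's. The paper never expands the product $f_{a,b}f_{b,c}$; instead it keeps $(f_{a,b}f_{b,c})^{p-1}=f_{a,b}^{p-1}f_{b,c}^{p-1}$ factored, applies the binomial theorem to each factor, and observes by pigeonhole (each binomial in the product contributes exactly one power of $x_b$ or $y_b$, so total $b$-degree is $2(p-1)$) that the only surviving term modulo $(x_b^p,y_b^p)$ is the one with $x_b^{p-1}y_b^{p-1}$. Picking that term out immediately produces the coefficient $\binom{p-1}{i}^2$ with no Vandermonde needed; comparison with the binomial expansion of $f_{a,c}^{p-1}$ then finishes the proof (with $p=2$ checked separately at the start). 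Your version expands $f_{a,b}f_{b,c}$ into three monomial-in-$x_b,y_b$ pieces, applies the trinomial theorem, deduces $j=k$ from the same degree bound, and is then left with the two-variable identity $\sum_j\binom{p-1}{p-1-2j,j,j}(\alpha\beta)^j(\alpha+\beta)^{p-1-2j}\equiv(\alpha-\beta)^{p-1}\pmod p$, which you settle by coefficient-matching and Vandermonde. Your approach buys a uniform treatment of $p=2$ (since $(-1)^{p-1}\equiv 1\pmod p$ for every prime) and a crisp reduction to a self-contained polynomial congruence, at the cost of an extra combinatorial identity; the paper's factored form is shorter because the $\binom{p-1}{i}^2$ falls out in one step. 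Both ultimately rest on Lemma~\ref{lem:binom_coeff_mod_p}.
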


\bproof
The result can be verified by hand when $p = 2$. Hence, we may suppose that $p > 2$ is an odd positive prime number. By the Pigeon hole principle, it is clear that every non-zero term of $(f_{a,b}f_{b,c})^{p-1}$ is divisible by $x_b^{p}$, $y_b^{p}$, or $x_b^{p-1}y_b^{p-1}$. The term of $(f_{a,b}f_{b,c})^{p-1}$ divisible by $x_b^{p-1}y_b^{p-1}$ is
\begin{equation}
\label{eqn:switching_terms_1}
\left(\sum_{i=0}^{p-1} \binom{p-1}{i}^2 (x_ay_c)^i\cdot (x_cy_a)^{p-1-i}\right) x_b^{p-1}y_b^{p-1},
\end{equation}
which by Lemma \ref{lem:binom_coeff_mod_p} is equal to 
\begin{equation}
\label{eqn:switching_terms_1.5}
\left(\sum_{i=0}^{p-1} (x_ay_c)^i\cdot (x_cy_a)^{p-1-i}\right) x_b^{p-1}y_b^{p-1}.
\end{equation}
By the binomial theorem, the polynomial $f_{a,c}^{p-1}$ can be expanded as
\begin{equation}
\label{eqn:switching_terms_2}
    \sum_{i=0}^{p-1} (-1)^{p-i-1} \binom{p-1}{i} (x_ay_c)^i\cdot (x_cy_a)^{p-1-i}.
\end{equation}
As $p$ is in particular an odd number, Lemma \ref{lem:binom_coeff_mod_p} implies that expression \eqref{eqn:switching_terms_2} is equal to 
\begin{equation}
    \label{eqn:switching_terms_3}
    \sum_{i=0}^{p-1} (x_ay_c)^i\cdot (x_cy_a)^{p-1-i}.
\end{equation}
The result now follows by substituting $f_{a,c}^{p-1}$ into expression \eqref{eqn:switching_terms_1.5}.
\eproof

\begin{lemma}\label{lem:Lucas}
Let $R = \k[x_1,x_2,x_3,y_1,y_2,y_3]$ and $\m$ the graded maximal ideal, where $\fchar(\k) = p > 0$. Then
\begin{align*}
    (f_{1,2}f_{2,3}f_{1,3})^{p-1} \in \m^{[p]}.
\end{align*}
\end{lemma}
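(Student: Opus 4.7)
The plan is to combine Lemma~\ref{lem:switching_terms} (the ``switching'' identity) with Lucas's Theorem (Theorem~\ref{thm_Lucas}) to reduce the claim to a single binomial coefficient vanishing modulo $p$.

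First, I would apply Lemma~\ref{lem:switching_terms} with $a=1$, $b=2$, $c=3$ to obtain
\[ (f_{1,2}f_{2,3})^{p-1} \equiv (x_{2}y_{2}f_{1,3})^{p-1} \pmod{(x_{2}^{p},y_{2}^{p})}. \]
Multiplying both sides by $f_{1,3}^{p-1}$ and noting that $(x_{2}^{p},y_{2}^{p}) \subset \m^{[p]}$, this yields
\[ (f_{1,2}f_{2,3}f_{1,3})^{p-1} \equiv (x_{2}y_{2})^{p-1} f_{1,3}^{2(p-1)} \pmod{\m^{[p]}}. \]
Thus it suffices to show that $(x_{2}y_{2})^{p-1} f_{1,3}^{2(p-1)} \in \m^{[p]}$.

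Next, I would expand $f_{1,3}^{2(p-1)} = (x_{1}y_{3} - x_{3}y_{1})^{2(p-1)}$ via the binomial theorem, so that a typical monomial in the expansion has the form
\[ \pm\binom{2(p-1)}{i} x_{1}^{i}y_{3}^{i}x_{3}^{2(p-1)-i}y_{1}^{2(p-1)-i}, \qquad 0 \le i \le 2(p-1). \]
Whenever $i \geq p$ or $2(p-1) - i \geq p$ (equivalently $i \le p-2$), at least one of the exponents is $\geq p$, so the corresponding term already lies in $\m^{[p]}$. The only remaining case is the middle term $i = p-1$, whose coefficient is $\binom{2(p-1)}{p-1}$.

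The key step, and the heart of the argument, is to show $\binom{2(p-1)}{p-1} \equiv 0 \pmod{p}$. For $p \ge 3$, the base-$p$ expansions are $2(p-1) = 1\cdot p + (p-2)$ and $p-1 = 0\cdot p + (p-1)$, so Lucas's Theorem gives
\[ \binom{2(p-1)}{p-1} \equiv \binom{1}{0}\binom{p-2}{p-1} \equiv 0 \pmod{p}, \]
since $p-2 < p-1$. For $p=2$, the identity $\binom{2}{1} = 2 \equiv 0 \pmod{2}$ handles the case directly (and also follows from Lucas). Hence the $i=p-1$ term vanishes modulo $p$, so $(x_{2}y_{2})^{p-1}f_{1,3}^{2(p-1)} \in \m^{[p]}$, which completes the proof.

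I do not expect any serious obstacles: the main idea is simply to recognize that Lemma~\ref{lem:switching_terms} collapses the triple product to a power of $f_{1,3}$ times $(x_{2}y_{2})^{p-1}$, after which the only obstruction is the single central binomial coefficient, dispatched by Lucas.
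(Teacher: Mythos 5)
Your proof is correct and follows essentially the same route as the paper: apply Lemma~\ref{lem:switching_terms} to collapse the triple product to $(x_2y_2)^{p-1}f_{1,3}^{2(p-1)}$ modulo $\m^{[p]}$, observe that only the central term of the binomial expansion survives, and kill its coefficient $\binom{2p-2}{p-1}$ via Lucas's Theorem. The extra care you take with the index bookkeeping ($i \ge p$ versus $i \le p-2$) and the separate check for $p=2$ are fine but not substantively different from what the paper does.
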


\begin{proof}
By Lemma \ref{lem:switching_terms}, we have that
\begin{align*}
    (f_{1,2}f_{2,3}f_{1,3})^{p-1} \equiv (x_2y_2)^{p-1} (f_{1,3})^{2p-2} \mod{\m^{[p]}}.
\end{align*}
Every term of $(f_{1,3})^{2p-2}$ belongs to $\m^{[p]}$ except for the term $(x_1y_3x_3y_1)^{p-1}$. The coefficient on this term is
\begin{align*}
    \binom{2p-2}{p-1}.
\end{align*}
By Theorem \ref{thm_Lucas}, we have that 
\begin{align*}
    \binom{2p-2}{p-1}  \equiv \binom{1}{0}\cdot \binom{p-2}{p-1} \equiv 0 \mod{p}.
\end{align*}
\end{proof}

\begin{lemma}\label{lem:swap}
Let $G$ be a connected graph, and let $S$ be a cut set of $G$. Let $i,j,k \in V(G)$. Denote by $\p = P_G(S)$ the minimal prime of $J_{G}$ corresponding to the cut set $S$. Then $$\p^{[p]}:(f_{i,j},f_{j,k}) \subseteq \p^{[p]}:(f_{i,k}).$$
\end{lemma}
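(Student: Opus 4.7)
My approach is to combine the classical Plücker identity for $2\times 2$ minors with the fact that Frobenius powers of primes have the same associated primes as the original (Peskine--Szpiro), a tool already invoked in the proof of Lemma \ref{lem:F_pure_preserved_vertex_deletion_completion}. A direct expansion of the vanishing $3\times 3$ determinant with a repeated column yields the identities
$$x_j f_{i,k} \;=\; x_i f_{j,k} + x_k f_{i,j}, \qquad y_j f_{i,k} \;=\; y_i f_{j,k} + y_k f_{i,j}.$$
Given $r \in \p^{[p]}:(f_{i,j},f_{j,k})$, so that $r f_{i,j}, r f_{j,k} \in \p^{[p]}$, multiplying each Plücker identity by $r$ places $r x_j f_{i,k}$ and $r y_j f_{i,k}$ into $\p^{[p]}$. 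In other words, $r f_{i,k} \in \p^{[p]}:(x_j,y_j)$.

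To upgrade this containment to $r f_{i,k} \in \p^{[p]}$, I would use that $\Ass_R(R/\p^{[p]}) = \Ass_R(R/\p) = \{\p\}$ by flatness of Frobenius. Whenever $j \notin S$, inspection of the generators given in Proposition \ref{prop:min_primes_binomial_edge_ideal} shows that neither $x_j$ nor $y_j$ lies in $\p$: the variable generators of $\p$ are exactly $\{x_s, y_s : s \in S\}$, and the remaining generators are $f_{a,b}$ with $a,b$ in a common connected component of $G \setminus S$, none of which can produce a linear form in $x_j$ alone. Consequently each of $x_j, y_j$ is a non-zerodivisor on $R/\p^{[p]}$, so $\p^{[p]}:(x_j,y_j) = \p^{[p]}$, and $r f_{i,k} \in \p^{[p]}$ as desired.

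The main obstacle is the case $j \in S$: then $x_j, y_j \in \p$ and both are zero-divisors on $R/\p^{[p]}$, so the final cancellation fails and one only deduces $r f_{i,k} \in \p^{[p]}:(x_j,y_j) \supsetneq \p^{[p]}$. Concretely, for $G$ the path $1\text{--}2\text{--}3$ with $S=\{2\}$, the element $r = x_2 y_2$ lies in $\p^{[2]}:(f_{1,2},f_{2,3})$ but $r f_{1,3} \notin \p^{[2]}=(x_2^2,y_2^2)$. Accordingly, I expect the lemma is either stated with the implicit assumption $j \notin S$ or is only invoked downstream in that regime; once $j \notin S$ is in force, the Plücker identity together with the associated-prime computation gives the statement in a couple of lines, and no further work is needed.
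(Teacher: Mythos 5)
Your argument matches the paper's in every essential respect: both rely on the Pl\"ucker relation $x_j f_{i,k} = x_i f_{j,k} + x_k f_{i,j}$ and on the fact that $\p^{[p]}$ is $\p$-primary (equivalently, $\Ass_R(R/\p^{[p]}) = \{\p\}$, which the paper phrases as ``$\sqrt{\p^{[p]}} = \p$ does not contain $x_j$''). Your diagnosis of the hypothesis issue is correct and sharper than what the paper records: the lemma as literally stated is false when $j \in S$, your $P_3$ counterexample with $S = \{2\}$ and $r = x_2 y_2$ is valid, and the paper's own proof silently assumes $j \notin S$ exactly where it asserts $x_j \notin \p$. This assumption does hold in the only place the lemma is invoked (Lemma~\ref{lem:one:colon}), where $j$ ranges over a path inside $G[A]$ and $A$ is disjoint from $S$, so the downstream application is sound. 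One small simplification you could make: the single relation involving $x_j$ already gives $r x_j f_{i,k} \in \p^{[p]}$, and since $x_j$ is a non-zerodivisor on $R/\p^{[p]}$ this suffices; invoking the companion identity with $y_j$ and the colon $\p^{[p]}:(x_j,y_j)$ is extra work you do not need.
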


\begin{proof}
Let $r \in \p^{[p]}:(f_{i,j},f_{j,k})$ in which case $rf_{i,j}$ and $rf_{j,k}$ belong to $\p^{[p]}$. From the following  relation 
\begin{align*}
    x_kf_{i,j} - x_jf_{i,k} + x_if_{j,k} = 0,
\end{align*}
we see that
\begin{align*}
    rx_jf_{i,k} = rx_kf_{i,j} + rx_jf_{j,k} \in \p^{[p]}.
\end{align*}
Since $\sqrt{\p^{[p]}} = \p$ does not contain $x_j$, it follows that $rf_{i,k} \in \p^{[p]}$. Thus, $r \in \p^{[p]}:(f_{i,k})$ as claimed.
\end{proof}

\bdefn \label{defn_minimal_vertex_isolators}
Let $G$ be a connected graph, and let $S$  be a cut set of $V(G)$. 
\begin{enumerate}
    \item We say that $S$ is a \textbf{minimal vertex separator} of $G$ if the induced subgraph of $G$ on vertex set $V(G) \setminus S$ consists of two connected components.
    \item In addition, we say that $S$ is a \textbf{minimal vertex isolator} if one of the connected components of $G[V(G) \setminus S]$ is a singleton vertex.
\end{enumerate}
For disjoint subsets $S$ and $A$ of $V(G)$, we denote by $P(S,A)$ the ideal 
\begin{align*}
    (x_i,y_i \mid i \in S) + J_{\tilde{G_A}},
\end{align*}
where $\tilde{G}_A$ is the complete graph on $A$.
\edefn

\begin{lemma}\label{lem:one:colon}
Let $G$ be a connected graph, and let $S$ be a minimal vertex isolator with $V-S = A \sqcup \{v\}$.  Let $i,j \in A$ be distinct.  Then, \[P(S,A)^{[p]}:J_G \subseteq (x_k,y_k\mid k \in (S \cup A - \{i,j\}))^{[p]} + (f_{i,j})^{p-1}.\]
\end{lemma}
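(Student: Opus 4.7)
The plan is to reduce the question modulo the ideal $\mathcal{J} := (x_k, y_k \mid k \in (S \cup A) \setminus \{i,j\})^{[p]}$ and carry out a cancellation in $R/\mathcal{J}$. Writing $\p := P(S,A)$, the strategy has three parts: (i) use Lemma \ref{lem:swap} together with the connectivity of $G[A]$ to upgrade the hypothesis $r \in \p^{[p]} : J_G$ to $r f_{i,j} \in \p^{[p]}$; (ii) show that the image of $\p^{[p]}$ in $R/\mathcal{J}$ is the principal ideal $(\bar{f}_{i,j}^p)$; and (iii) cancel a single factor of $\bar{f}_{i,j}$ to conclude.

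For (i), since $S$ is a minimal vertex isolator, $G[A]$ is connected, so there is a path $i = a_0, a_1, \ldots, a_m = j$ in $G[A]$. Each edge of the path gives $r f_{a_k, a_{k+1}} \in \p^{[p]}$, and iteratively applying Lemma \ref{lem:swap} with intermediate vertex $a_k \in A \subseteq V(G) \setminus S$ (so $x_{a_k} \notin \p$) yields $r f_{i, j} \in \p^{[p]}$. For (ii), the ideal $\p^{[p]}$ is generated by the $p$-th powers $\{x_s^p, y_s^p : s \in S\}$ together with $\{f_{a,b}^p = x_a^p y_b^p - x_b^p y_a^p : a \neq b \in A\}$; the first class already lies in $\mathcal{J}$, and if $\{a, b\} \neq \{i, j\}$ then at least one of $a, b$ lies in $A \setminus \{i, j\}$, so both monomials $x_a^p y_b^p$ and $x_b^p y_a^p$ lie in $\mathcal{J}$. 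The only surviving generator is $\bar{f}_{i,j}^p$, hence $\p^{[p]} \subseteq \mathcal{J} + (f_{i,j}^p)$.

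For (iii), I observe that $R/\mathcal{J}$ is a free module over the polynomial subring $\k[x_i, y_i, x_j, y_j]$, since $\mathcal{J}$ involves only variables disjoint from $\{x_i, y_i, x_j, y_j\}$. Because $\k[x_i, y_i, x_j, y_j]$ is a domain and $f_{i,j} \neq 0$, $f_{i,j}$ is a non-zero-divisor on $R/\mathcal{J}$; hence from $\bar{r}\, \bar{f}_{i,j} \in (\bar{f}_{i,j}^p)$ we may cancel one factor to obtain $\bar{r} \in (\bar{f}_{i,j}^{p-1})$, and lifting this back to $R$ yields $r \in \mathcal{J} + (f_{i,j}^{p-1})$ as desired. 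The most delicate step is (ii)—verifying that every non-$f_{i,j}$ generator of $\p^{[p]}$ dies modulo $\mathcal{J}$; the combinatorics in step (i) is routine bookkeeping with Lemma \ref{lem:swap}, and step (iii) is routine once the free-module structure of $R/\mathcal{J}$ is recognized.
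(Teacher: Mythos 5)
Your proposal is correct and follows essentially the same route as the paper: reduce to $r f_{i,j} \in P(S,A)^{[p]}$ by iterating Lemma~\ref{lem:swap} along a path in $G[A]$, then observe that $P(S,A)^{[p]}$ lies in $(x_k,y_k\mid k \in (S \cup A) \setminus \{i,j\})^{[p]} + (f_{i,j}^p)$ and cancel a factor of $f_{i,j}$. Your steps (ii) and (iii) simply make explicit the regular-sequence argument that the paper cites more tersely; the content is the same.
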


\begin{proof}
    Since $G[A]$ is connected by assumption, there is a path $i = v_0,v_1,\ldots,v_r = j$ in $G[A]$ with $r \geq 1$.  Thus $f_{v_{k-1},v_k} \in J_G$ for $1 \le k \le r$.  Therefore, $P(S,A)^{[p]}:J_G \subseteq P(S,A)^{[p]}:(f_{v_0,v_1},f_{v_1,v_2},\ldots,f_{v_{r-1},v_r})$.  
    
    First we show that $P(S,A)^{[p]}:J_G \subseteq P(S,A)^{[p]}:(f_{i,j})$. We observe that
    \begin{align*}
        &\phantom{=(} P(S,A)^{[p]}:(f_{v_0,v_1},\ldots,f_{v_{r-1},v_r}) \\&= \left( P(S,A)^{[p]}:(f_{v_0,v_1},f_{v_1,v_2}) \right) \cap \left( P(S,A)^{[p]}:(f_{v_2,v_3},\ldots,f_{v_{r-1},v_r})\right) \\
        &\subseteq  \left( P(S,A)^{[p]}:(f_{v_0,v_2})\right) \cap \left(P(S,A)^{[p]}:(f_{v_2,v_3},\ldots,f_{v_{r-1},v_r}) \right)\\
        &= P(S,A)^{[p]}:(f_{v_0,v_2},f_{v_2,v_3}\ldots,f_{v_{r-1},v_r})
    \end{align*}
    where the first containment follows from Lemma~\ref{lem:swap}. By repeating this argument it follows that $P(S,A)^{[p]}:J_G \subseteq P(S,A)^{[p]}:(f_{i,j})$.
    
    Next, we observe that for $g \in P(S,A)^{[p]}:(f_{i,j})$
    \begin{align*}
        g \cdot f_{i,j} \in P(S,A)^{[p]} \subset (x_k,y_k\mid k \in (S \cup A - \{i,j\}))^{[p]} + (f_{i,j})^{p}.
    \end{align*}
    The conclusion now follows from the fact that $\{x_l,y_l \mid l \in S \cup A - \{i,j\}\} \cup \{f_{i,j}\}$ form a regular sequence.
\end{proof}

\begin{prop}\label{prop:reg:sequence}
    Let $R = \kk[x_1,\ldots,x_n]$ be a polynomial ring, and let $f_1,\ldots,f_t \in R$ be a homogeneous regular sequence.  For $\alpha \in \mathbb{N}^t$, write $f^\alpha = \prod_{i = 1}^t f_i^{\alpha_i}$.  Then \[(f^{\alpha_1},\ldots,f^{\alpha_r}) \cap (f^{\beta_1},\ldots,f^{\beta_s}) = (\mathrm{lcm}(f^{\alpha_i},f^{\beta_j}) \mid 1 \le i \le r, 1 \le j \le s).\]
\end{prop}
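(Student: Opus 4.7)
The plan is to reduce this identity to the corresponding statement about monomial ideals in a polynomial ring, by using the fact that a homogeneous regular sequence behaves algebraically like a sequence of variables.

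The inclusion $\supseteq$ is trivial since each $\mathrm{lcm}(f^{\alpha_i}, f^{\beta_j})$ is divisible by both $f^{\alpha_i}$ and $f^{\beta_j}$ and hence lies in the intersection. For the reverse inclusion, I would set up the auxiliary graded polynomial ring $A = \kk[y_1,\ldots,y_t]$ with $\deg(y_i) = \deg(f_i)$, together with the graded ring homomorphism $\varphi \colon A \to R$ sending $y_i \mapsto f_i$. This makes $R$ into a graded $A$-module via $\varphi$.

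The key technical step is to show that $R$ is flat as an $A$-module. For this I would compute $\Tor^A_\bullet(\kk, R)$ using the Koszul resolution: since $y_1,\ldots,y_t$ is an $A$-regular sequence, the Koszul complex $K_\bullet^A(y_1,\ldots,y_t)$ is a free $A$-resolution of $\kk = A/(y_1,\ldots,y_t)$, and tensoring with $R$ produces the Koszul complex $K_\bullet^R(f_1,\ldots,f_t)$. The latter is acyclic in positive homological degrees because $f_1,\ldots,f_t$ is $R$-regular, so $\Tor^A_i(\kk, R) = 0$ for all $i > 0$. Since $R$ is a $\Z$-graded $A$-module with finite dimensional graded pieces, graded Nakayama then upgrades this Tor-vanishing to freeness, and in particular flatness, of $R$ over $A$.

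With flatness established, the result follows formally. In $A$, the monomial ideals $I = (y^{\alpha_1},\ldots,y^{\alpha_r})$ and $J = (y^{\beta_1},\ldots,y^{\beta_s})$ satisfy the classical identity $I \cap J = (\mathrm{lcm}(y^{\alpha_i}, y^{\beta_j}) \mid 1 \le i \le r,\; 1 \le j \le s)$. Tensoring the short exact sequence $0 \to A/(I \cap J) \to A/I \oplus A/J \to A/(I+J) \to 0$ with the flat $A$-module $R$ preserves exactness, which translates into the equality $(I \cap J) R = IR \cap JR$. Substituting $IR = (f^{\alpha_1},\ldots,f^{\alpha_r})$, $JR = (f^{\beta_1},\ldots,f^{\beta_s})$, and $(I \cap J) R = (\mathrm{lcm}(f^{\alpha_i}, f^{\beta_j}))_{i,j}$ yields the claimed equality. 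The main obstacle is verifying flatness, which is the only step where the regular-sequence hypothesis is actually used; the rest is the well-known monomial-ideal combinatorics transported across the flat base change.
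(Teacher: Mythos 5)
Your proof is correct and takes the same route as the paper: reduce to the well-known intersection formula for monomial ideals in $\kk[y_1,\ldots,y_t]$ and then transport it to $R$ across the flat extension $\kk[f_1,\ldots,f_t] \hookrightarrow R$. The only difference is that you spell out the verification of flatness (Koszul complex, vanishing of $\Tor^A_i(\kk,R)$ for $i>0$, graded Nakayama), whereas the paper simply asserts that this extension is flat.
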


\begin{proof}
    First, note that the subring $S = \kk[f_1,\ldots,f_t]$ is isomorphic to a polynomial ring in $t$ variables.  In $S$, the intersection amounts to a statement about the intersection of monomial ideals, which is well-known; see e.g. \cite[Proposition 2.1.5]{MRSW2018}.  The conclusion follows since the extension $S \to R$ is flat.
\end{proof}

\bthm
\label{thm:non_F_purity}
Let $G$ be a connected graph.  Suppose $G$ has three mutually nonadjacent vertices $v_1,v_2,v_3$ such that $N(v_i)$ is a minimal vertex isolator for $i = 1,2,3$ and such that $G \setminus N(v_{i}) = \{v_{i}\} \sqcup (G\setminus N[v_i])$. Then $R_G$ is not F-pure for any positive characteristic $p > 0$.
\ethm

\begin{proof}
By Fedder's Criterion Theorem (\ref{thm:fedder_criterion}), it suffices to show that $J_G^{[p]}:J_G \subseteq \m^{[p]}$.  First, we observe that the hypothesis that $N(v_{i})$ is a minimal vertex isolator of $G$ implies that
\begin{align*}
    \p_i := P(N(v_i),G-N(v_i)), \quad i = 1,2,3
\end{align*}
is a minimal prime of $J_{G}$. Lemma~\ref{lem:one:colon} implies that 
\begin{align*}
\p_i^{[p]}:J_G \subseteq (x_l,y_l \mid l \in (V(G) - \{v_i,v_j,v_k\}))^{[p]} + (f_{v_j,v_k})^{p-1}
\end{align*}
for distinct $i,j,k \in \{1,2,3\}$.  Write $\eta = (x_l,y_l \mid l \in (V(G) \setminus \{v_i,v_j,v_k\}))$. Then, we compute that
\begin{align}
    J_G^{[p]}:J_G &\subseteq \left(\bigcap_{l = 1}^3 \p_i\right)^{[p]}:J_G \label{step1}\\
    &= \left(\bigcap_{l = 1}^3 \p_i^{[p]}\right):J_G\label{step2}\\
    &= \bigcap_{l = 1}^3 \left( \p_i^{[p]}:J_G\right)\label{step3}\\
    &\subseteq ( \eta^{[p]} + (f_{v_i,v_j})^{p-1}) \cap ( \eta^{[p]} + (f_{v_i,v_k})^{p-1}) \cap ( \eta^{[p]} + (f_{v_j,v_k})^{p-1})\label{step4}\\
    &\subseteq ( \eta^{[p]} + (f_{v_i,v_j}f_{v_i,v_k})^{p-1})  \cap ( \eta^{[p]} + (f_{v_j,v_k})^{p-1})\label{step5}\\
    &\subseteq ( \eta^{[p]} + (f_{v_i,v_j}f_{v_i,v_k}f_{v_j,v_k})^{p-1})\label{step6}\\
    &\subseteq \m^{[p]}\label{step7}.
    \end{align}

We have that: step~(\ref{step1}) follows from $J_G^{[p]} \subseteq \bigcap_{l = 1}^3 \p_i$; step~(\ref{step2}) follows from the definition of Frobenius power; step~(\ref{step3}) follows from the definition of colon ideals; step~(\ref{step4}) follows from Lemma~\ref{lem:one:colon}; step~(\ref{step5}) follows from Proposition~\ref{prop:reg:sequence} since $\{x_l,y_l \mid l \in (V(G) - \{v_i,v_j,v_k\})\} \cup \{f_{v_i,v_j},f_{v_i,v_k}\}$ forms a regular sequence; and step~(\ref{step6}) also follows from Proposition~\ref{prop:reg:sequence} since $\{x_l,y_l \mid l \in (V(G) - \{v_i,v_j,v_k\})\} \cup \{f_{v_i,v_j} \cdot f_{v_i,v_k},f_{v_j,v_k}\}$ is a regular sequence.  The final step~(\ref{step7}) follows from Lemma~\ref{lem:Lucas}.
\end{proof}

\subsection{Asteroidal Triples}

We next utilize Theorem \ref{thm:non_F_purity} to show that any graph containing an asteroidal triple is not F-pure in any positive characteristic.

\bdefn[\cite{isgciATFree},\cite{LB62}]
Three vertices of a graph form an \textbf{asteroidal triple} if any two of them are connected by a path avoiding the neighborhood of the third. A graph containing an asteroidal triple is called \textbf{asteroidal}.  A graph is \textbf{AT-free} if it does not contain an asteroidal triple.
\edefn 

There is a characterization of AT-free graphs via forbidden induced subgraphs.

\bthm[\cite{isgciATFree}]
\label{thm:forbidden_subgraph_AT_free}
A graph is AT-free if and only if the graph does not contain any of the following subgraphs as an induced subgraph:
\begin{multicols}{3}
\begin{enumerate}
\item $C_{n}$ for $n \geq 6$
\item $T_{2}$
\item $X_{2}$
\item $X_{3}$
\item $X_{30}$
\item $X_{31}$
\item $X_{32}$
\item $X_{33}$
\item $X_{34}$
\item $X_{35}$
\item $X_{36}$
\item $X_{37}$
\item $X_{38}$
\item $X_{39}$
\item $X_{40}$
\item $X_{41}$
\item $XF_{2}^{n+1}$ for $n \geq 0$
\item $XF_{3}^{n}$ for $n \geq 0$
\item $XF_{4}^{n}$ for $n \geq 0$
\end{enumerate}
\end{multicols}
\ethm

\bex 
The graphs in Theorem \ref{thm:forbidden_subgraph_AT_free} are depicted in Figures \ref{fig:finite_AT_graphs} and \ref{fig:reg_graphs}. 
For the graph $C_{n}$ as depicted in Figure \ref{fig:reg_graphs}, the vertex set $\{1,3,5\}$ is an asteroidal triple whenever $n \geq 6$. For the remaining graphs shown in Figures \ref{fig:finite_AT_graphs} and \ref{fig:reg_graphs}, the vertices $\{v_1,v_2,v_3\}$ form an asteroidal triple.
\eex 



\def\edTT{2}
\def\scale{0.75}

\def\ptsTT{1*\edTT/1*\edTT/v_1/180,
3*\edTT/1*\edTT/v_2/0,
2*\edTT/2*\edTT/v_3/0}
    
\def\ptsTTUnlabel{1*\edTT/0*\edTT/1,
2*\edTT/0*\edTT/2,
3*\edTT/0*\edTT/3,
2*\edTT/1*\edTT/5}

\def\edgesTT{
    v_1/1,
    1/2,
    2/3,
    3/v_2,
    2/5,
    5/v_3}



\def\edXT{2}
\def\scale{0.75}

\def\ptsXT{0*\edXT/0*\edXT/v_1/270,
3*\edXT/0*\edXT/v_2/270,
1.5*\edXT/1.866*\edXT/v_3/0}
    
\def\ptsXTUnlabel{1*\edXT/0*\edXT/1,
1.5*\edXT/-0.866*\edXT/2,
2*\edXT/0*\edXT/3,
1.5*\edXT/0.866*\edXT/5}

\def\edgesXT{
    v_1/1,
    1/2,
    1/5,
    2/3,
    3/v_2,
    3/5,
    5/v_3}



\def\edXR{2}
\def\scale{0.75}

\def\ptsXR{0*\edXR/0*\edXR/v_1/270,
2*\edXR/0*\edXR/v_2/270,
1*\edXR/2*\edXR/v_3/0}
    
\def\ptsXRUnlabel{
1*\edXR/0*\edXR/1,
0*\edXR/1*\edXR/3,
1*\edXR/1*\edXR/4,
2*\edXR/1*\edXR/5,
1*\edXR/1*\edXR/6}

\def\edgesXR{
    v_1/1,
    v_1/3,
    1/4,
    1/v_2,
    v_2/5,
    3/4,
    4/5,
    4/v_3}



\def\edXTZ{2}
\def\scale{0.75}

\def\ptsXTZ{0*\edXTZ/0*\edXTZ/v_1/270,
3*\edXTZ/0*\edXTZ/v_2/270,
1.5*\edXTZ/1.866*\edXTZ/v_3/0}
    
\def\ptsXTZUnlabel{1*\edXTZ/0*\edXTZ/1,
2*\edXTZ/0*\edXTZ/2,
1*\edXTZ/1*\edXTZ/3,
2*\edXTZ/1*\edXTZ/4}

\def\edgesXTZ{
    v_1/1,
    1/2,
    1/3,
    2/v_2,
    2/4,
    3/4,
    3/v_3,
    4/v_3}



\def\edXTO{2}
\def\scale{0.75}

\def\ptsXTO{0*\edXTO/0*\edXTO/v_1/270,
2*\edXTO/0*\edXTO/v_2/270,
1*\edXTO/2*\edXTO/v_3/0}
    
\def\ptsXTOUnlabel{
1*\edXTO/0*\edXTO/1,
0*\edXTO/1*\edXTO/2,
1*\edXTO/1*\edXTO/3,
2*\edXTO/1*\edXTO/4,
1*\edXTO/1*\edXTO/5}

\def\edgesXTO{
    v_1/1,
    v_1/2,
    1/3,
    1/v_2,
    v_2/4,
    2/3,
    3/4,
    3/v_3,
    1/2,
    1/4}



\def\edXTT{2}
\def\scale{0.75}

\def\ptsXTT{0*\edXTT/0*\edXTT/v_1/270,
2*\edXTT/0*\edXTT/v_2/270,
1*\edXTT/2*\edXTT/v_3/0}
    
\def\ptsXTTUnlabel{
1*\edXTT/0*\edXTT/1,
0*\edXTT/1*\edXTT/2,
1*\edXTT/1*\edXTT/3,
2*\edXTT/1*\edXTT/4,
1*\edXTT/1*\edXTT/5}

\def\edgesXTT{
    v_1/1,
    v_1/2,
    1/3,
    1/v_2,
    v_2/4,
    2/3,
    3/4,
    3/v_3,
    1/2}



\def\edXTR{2}
\def\scale{0.75}

\def\ptsXTR{0*\edXTR/0*\edXTR/v_1/270,
2*\edXTR/0*\edXTR/v_2/270,
0.5*\edXTR/1.866*\edXTR/v_3/0}
    
\def\ptsXTRUnlabel{
1*\edXTR/0*\edXTR/1,
0*\edXTR/1*\edXTR/2,
1*\edXTR/1*\edXTR/3,
2*\edXTR/1*\edXTR/4,
1*\edXTR/1*\edXTR/5}

\def\edgesXTR{
    v_1/1,
    v_1/2,
    1/3,
    1/v_2,
    v_2/4,
    2/3,
    3/4,
    3/v_3,
    2/v_3}
    


\def\edXTF{2}
\def\scale{0.75}

\def\ptsXTF{0*\edXTF/0*\edXTF/v_1/270,
2*\edXTF/0*\edXTF/v_2/270,
1*\edXTF/1.732*\edXTF/v_3/0}
    
\def\ptsXTFUnlabel{
1*\edXTF/0*\edXTF/1,
1.5*\edXTF/0*\edXTF/2,
0.5*\edXTF/0.866*\edXTF/3,
1.5*\edXTF/0.866*\edXTF/4}

\def\edgesXTF{
    v_1/1,
    v_1/3,
    1/3,
    1/2,
    v_2/4,
    2/3,
    3/4,
    3/v_3,
    4/v_3,
    2/v_2,
    1/4}



\def\edXTV{2}
\def\scale{0.75}

\def\ptsXTV{0*\edXTV/0*\edXTV/v_1/270,
2*\edXTV/0*\edXTV/v_2/270,
0.5*\edXTV/1.866*\edXTV/v_3/0}
    
\def\ptsXTVUnlabel{
1*\edXTV/0*\edXTV/1,
0*\edXTV/1*\edXTV/2,
1*\edXTV/1*\edXTV/3,
2*\edXTV/1*\edXTV/4,
1*\edXTV/1*\edXTV/5}

\def\edgesXTV{
    v_1/1,
    v_1/2,
    1/3,
    1/v_2,
    v_2/4,
    2/3,
    3/4,
    3/v_3,
    2/v_3,
    1/2}



\def\edXTX{2}
\def\scale{0.75}

\def\ptsXTX{0*\edXTX/0*\edXTX/v_1/270,
2*\edXTX/0*\edXTX/v_2/270,
1*\edXTX/1.732*\edXTX/v_3/0}
    
\def\ptsXTXUnlabel{
0.5*\edXTX/0*\edXTX/1,
1.5*\edXTX/0*\edXTX/2,
0.5*\edXTX/0.866*\edXTX/3,
1.5*\edXTX/0.866*\edXTX/4}

\def\edgesXTX{
    v_1/1,
    v_1/3,
    1/2,
    v_2/4,
    3/4,
    3/v_3,
    4/v_3,
    2/v_2,
    2/3,
    1/4}



\def\edXTS{2}
\def\scale{0.75}

\def\ptsXTS{0*\edXTS/0*\edXTS/v_1/270,
2*\edXTS/0*\edXTS/v_2/270,
1*\edXTS/1.732*\edXTS/v_3/0}
    
\def\ptsXTSUnlabel{
1*\edXTS/0*\edXTS/1,
0.5*\edXTS/0.866*\edXTS/3,
1.5*\edXTS/0.866*\edXTS/4}

\def\edgesXTS{
    v_1/1,
    v_1/3,
    v_2/4,
    3/4,
    3/v_3,
    4/v_3,
    1/v_2}



\def\edXTE{2}
\def\scale{0.75}

\def\ptsXTE{0*\edXTE/0*\edXTE/v_1/270,
2*\edXTE/0*\edXTE/v_2/270,
0.5*\edXTE/1.732*\edXTE/v_3/180}
    
\def\ptsXTEUnlabel{
1*\edXTE/0*\edXTE/1,
0.5*\edXTE/0.866*\edXTE/3,
1.5*\edXTE/0.866*\edXTE/4,
1.5*\edXTE/1.732*\edXTE/5}

\def\edgesXTE{
    v_1/1,
    v_1/3,
    v_2/4,
    3/4,
    v_3/3,
    v_3/5,
    4/5,
    1/v_2}



\def\edXTN{2}
\def\scale{0.75}

\def\ptsXTN{0*\edXTN/0*\edXTN/v_1/270,
2*\edXTN/0*\edXTN/v_2/270,
1*\edXTN/1.732*\edXTN/v_3/0}
    
\def\ptsXTNUnlabel{
0.5*\edXTN/0*\edXTN/1,
1.5*\edXTN/0*\edXTN/2,
0.5*\edXTN/0.866*\edXTN/3,
1.5*\edXTN/0.866*\edXTN/4}

\def\edgesXTN{
    v_1/1,
    v_1/3,
    1/2,
    v_2/4,
    3/v_3,
    4/v_3,
    2/v_2,
    2/3,
    1/4}



\def\edXF{2}
\def\scale{0.75}

\def\ptsXF{0*\edXF/0*\edXF/v_1/270,
2*\edXF/0*\edXF/v_2/270,
1*\edXF/1.732*\edXF/v_3/0}
    
\def\ptsXFUnlabel{
0.5*\edXF/0*\edXF/1,
1*\edXF/0*\edXF/2,
0.5*\edXF/0.866*\edXF/3,
1.5*\edXF/0.866*\edXF/4}

\def\edgesXF{
    v_1/1,
    v_1/3,
    1/2,
    v_2/4,
    3/v_3,
    4/v_3,
    2/v_2,
    2/3,
    1/4,
    2/4}



\def\edXFO{2}
\def\scale{0.75}

\def\ptsXFO{0*\edXFO/0*\edXFO/v_1/270,
3*\edXFO/0*\edXFO/v_2/270,
1.5*\edXFO/1.866*\edXFO/v_3/0}
    
\def\ptsXFOUnlabel{1*\edXFO/0*\edXFO/1,
2*\edXFO/0*\edXFO/2,
1*\edXFO/1*\edXFO/3,
2*\edXFO/1*\edXFO/4}

\def\edgesXFO{
    v_1/1,
    1/2,
    1/3,
    2/v_2,
    2/4,
    3/v_3,
    4/v_3}

%
%
\begin{figure}[!htbp]
\begin{subfigure}{.3\linewidth}
\begin{center}
\begin{tikzpicture}
	\foreach \x/\y/\z/\w in \ptsTT {
		\draw[fill = black!50] (\scale*\x,\scale*\y) circle [radius = 0.1] node[label = {[label distance = 0.05 cm]\w: $\z$}] (\z) {}; 
	}
 
\foreach \x/\y/\z in \ptsTTUnlabel {
		\draw[fill = black!50] (\scale*\x,\scale*\y) circle [radius = 0.1] node (\z) {}; 
	}
 
\foreach \x/\y in \edgesTT { \draw (\x) -- (\y); }  

\end{tikzpicture}
\subcaption{The graph $T_{2}$} \label{graph:TT}
\end{center}
\end{subfigure}
\hfill
%
%
%
\begin{subfigure}{.3\linewidth}
\begin{center}
\begin{tikzpicture}
	\foreach \x/\y/\z/\w in \ptsXT {
		\draw[fill = black!50] (\scale*\x,\scale*\y) circle [radius = 0.1] node[label = {[label distance = 0.05 cm]\w: $\z$}] (\z) {}; 
	}
 
\foreach \x/\y/\z in \ptsXTUnlabel {
		\draw[fill = black!50] (\scale*\x,\scale*\y) circle [radius = 0.1] node (\z) {}; 
	}

\foreach \x/\y in \edgesXT { \draw (\x) -- (\y); }  


\end{tikzpicture}
\subcaption{The graph $
X_{2}$} \label{graph:XT}
\end{center}
\end{subfigure}
\hfill 
%
%
%
\begin{subfigure}{.3\linewidth}
\begin{center}
\begin{tikzpicture}
	\foreach \x/\y/\z/\w in \ptsXR {
		\draw[fill = black!50] (\scale*\x,\scale*\y) circle [radius = 0.1] node[label = {[label distance = 0.05 cm]\w: $\z$}] (\z) {}; 
	}
 
\foreach \x/\y/\z in \ptsXRUnlabel {
		\draw[fill = black!50] (\scale*\x,\scale*\y) circle [radius = 0.1] node (\z) {}; 
	}

\foreach \x/\y in \edgesXR { \draw (\x) -- (\y); }  


\end{tikzpicture}
\subcaption{The graph $
X_{3}$} \label{graph:XR}
\end{center}
\end{subfigure}
\hfill
%
%
%
\begin{subfigure}{.3\linewidth}
\begin{center}
\begin{tikzpicture}
	\foreach \x/\y/\z/\w in \ptsXTZ {
		\draw[fill = black!50] (\scale*\x,\scale*\y) circle [radius = 0.1] node[label = {[label distance = 0.05 cm]\w: $\z$}] (\z) {}; 
	}
 
\foreach \x/\y/\z in \ptsXTZUnlabel {
		\draw[fill = black!50] (\scale*\x,\scale*\y) circle [radius = 0.1] node (\z) {}; 
	}

\foreach \x/\y in \edgesXTZ { \draw (\x) -- (\y); }  


\end{tikzpicture}
\subcaption{The graph $
X_{30}$} \label{graph:XTZ}
\end{center}
\end{subfigure}
\hfill
%
%
%
\begin{subfigure}{.3\linewidth}
\begin{center}
\begin{tikzpicture}
	\foreach \x/\y/\z/\w in \ptsXTO {
		\draw[fill = black!50] (\scale*\x,\scale*\y) circle [radius = 0.1] node[label = {[label distance = 0.05 cm]\w: $\z$}] (\z) {}; 
	}
 
\foreach \x/\y/\z in \ptsXTOUnlabel {
		\draw[fill = black!50] (\scale*\x,\scale*\y) circle [radius = 0.1] node (\z) {}; 
	}

\foreach \x/\y in \edgesXTO { \draw (\x) -- (\y); }  


\end{tikzpicture}
\subcaption{The graph $
X_{31}$} \label{graph:XTO}
\end{center}
\end{subfigure}
\hfill
%
%
%
\begin{subfigure}{.3\linewidth}
\begin{center}
\begin{tikzpicture}
	\foreach \x/\y/\z/\w in \ptsXTT {
		\draw[fill = black!50] (\scale*\x,\scale*\y) circle [radius = 0.1] node[label = {[label distance = 0.05 cm]\w: $\z$}] (\z) {}; 
	}
 
\foreach \x/\y/\z in \ptsXTTUnlabel {
		\draw[fill = black!50] (\scale*\x,\scale*\y) circle [radius = 0.1] node (\z) {}; 
	}

\foreach \x/\y in \edgesXTT { \draw (\x) -- (\y); }  


\end{tikzpicture}
\subcaption{The graph $
X_{32}$} \label{graph:XTT}
\end{center}
\end{subfigure}
\hfill
%
%
%
\begin{subfigure}{.3\linewidth}
\begin{center}
\begin{tikzpicture}
	\foreach \x/\y/\z/\w in \ptsXTR {
		\draw[fill = black!50] (\scale*\x,\scale*\y) circle [radius = 0.1] node[label = {[label distance = 0.05 cm]\w: $\z$}] (\z) {}; 
	}
 
\foreach \x/\y/\z in \ptsXTRUnlabel {
		\draw[fill = black!50] (\scale*\x,\scale*\y) circle [radius = 0.1] node (\z) {}; 
	}

\foreach \x/\y in \edgesXTR { \draw (\x) -- (\y); }  


\end{tikzpicture}
\subcaption{The graph $
X_{33}$} \label{graph:XTR}
\end{center}
\end{subfigure}
\hfill
%
%
%
\begin{subfigure}{.3\linewidth}
\begin{center}
\begin{tikzpicture}
	\foreach \x/\y/\z/\w in \ptsXTF {
		\draw[fill = black!50] (\scale*\x,\scale*\y) circle [radius = 0.1] node[label = {[label distance = 0.05 cm]\w: $\z$}] (\z) {}; 
	}
 
\foreach \x/\y/\z in \ptsXTFUnlabel {
		\draw[fill = black!50] (\scale*\x,\scale*\y) circle [radius = 0.1] node (\z) {}; 
	}

\foreach \x/\y in \edgesXTF { \draw (\x) -- (\y); }  


\end{tikzpicture}
\subcaption{The graph $
X_{34}$} \label{graph:XTF}
\end{center}
\end{subfigure}
\hfill
%
%
%
\begin{subfigure}{.3\linewidth}
\begin{center}
\begin{tikzpicture}
	\foreach \x/\y/\z/\w in \ptsXTV {
		\draw[fill = black!50] (\scale*\x,\scale*\y) circle [radius = 0.1] node[label = {[label distance = 0.05 cm]\w: $\z$}] (\z) {}; 
	}
 
\foreach \x/\y/\z in \ptsXTVUnlabel {
		\draw[fill = black!50] (\scale*\x,\scale*\y) circle [radius = 0.1] node (\z) {}; 
	}

\foreach \x/\y in \edgesXTV { \draw (\x) -- (\y); }  


\end{tikzpicture}
\subcaption{The graph $
X_{35}$} \label{graph:XTV}
\end{center}
\end{subfigure}
\hfill
%
%
%
\begin{subfigure}{.3\linewidth}
\begin{center}
\begin{tikzpicture}
	\foreach \x/\y/\z/\w in \ptsXTX {
		\draw[fill = black!50] (\scale*\x,\scale*\y) circle [radius = 0.1] node[label = {[label distance = 0.05 cm]\w: $\z$}] (\z) {}; 
	}
 
\foreach \x/\y/\z in \ptsXTXUnlabel {
		\draw[fill = black!50] (\scale*\x,\scale*\y) circle [radius = 0.1] node (\z) {}; 
	}

\foreach \x/\y in \edgesXTX { \draw (\x) -- (\y); }  


\end{tikzpicture}
\subcaption{The graph $
X_{36}$} \label{graph:XTX}
\end{center}
\end{subfigure}
\hfill
%
%
%
\begin{subfigure}{.3\linewidth}
\begin{center}
\begin{tikzpicture}
	\foreach \x/\y/\z/\w in \ptsXTS {
		\draw[fill = black!50] (\scale*\x,\scale*\y) circle [radius = 0.1] node[label = {[label distance = 0.05 cm]\w: $\z$}] (\z) {}; 
	}
 
\foreach \x/\y/\z in \ptsXTSUnlabel {
		\draw[fill = black!50] (\scale*\x,\scale*\y) circle [radius = 0.1] node (\z) {}; 
	}

\foreach \x/\y in \edgesXTS { \draw (\x) -- (\y); }  


\end{tikzpicture}
\subcaption{The graph $
X_{37}$} \label{graph:XTS}
\end{center}
\end{subfigure}
\hfill
%
%
%
\begin{subfigure}{.3\linewidth}
\begin{center}
\begin{tikzpicture}
	\foreach \x/\y/\z/\w in \ptsXTE {
		\draw[fill = black!50] (\scale*\x,\scale*\y) circle [radius = 0.1] node[label = {[label distance = 0.05 cm]\w: $\z$}] (\z) {}; 
	}
 
\foreach \x/\y/\z in \ptsXTEUnlabel {
		\draw[fill = black!50] (\scale*\x,\scale*\y) circle [radius = 0.1] node (\z) {}; 
	}

\foreach \x/\y in \edgesXTE { \draw (\x) -- (\y); }  


\end{tikzpicture}
\subcaption{The graph $
X_{38}$} \label{graph:XTE}
\end{center}
\end{subfigure}
\hfill
%
%
%
\begin{subfigure}{.3\linewidth}
\begin{center}
\begin{tikzpicture}
	\foreach \x/\y/\z/\w in \ptsXTN {
		\draw[fill = black!50] (\scale*\x,\scale*\y) circle [radius = 0.1] node[label = {[label distance = 0.05 cm]\w: $\z$}] (\z) {}; 
	}
 
\foreach \x/\y/\z in \ptsXTNUnlabel {
		\draw[fill = black!50] (\scale*\x,\scale*\y) circle [radius = 0.1] node (\z) {}; 
	}

\foreach \x/\y in \edgesXTN { \draw (\x) -- (\y); }  


\end{tikzpicture}
\subcaption{The graph $
X_{39}$} \label{graph:XTN}
\end{center}
\end{subfigure}
\hfill
%
%
%
\begin{subfigure}{.3\linewidth}
\begin{center}
\begin{tikzpicture}
	\foreach \x/\y/\z/\w in \ptsXF {
		\draw[fill = black!50] (\scale*\x,\scale*\y) circle [radius = 0.1] node[label = {[label distance = 0.05 cm]\w: $\z$}] (\z) {}; 
	}
 
\foreach \x/\y/\z in \ptsXFUnlabel {
		\draw[fill = black!50] (\scale*\x,\scale*\y) circle [radius = 0.1] node (\z) {}; 
	}

\foreach \x/\y in \edgesXF { \draw (\x) -- (\y); }  


\end{tikzpicture}
\subcaption{The graph $
X_{40}$} \label{graph:XF}
\end{center}
\end{subfigure}
\hfill
%
%
%
\begin{subfigure}{.3\linewidth}
\begin{center}
\begin{tikzpicture}
	\foreach \x/\y/\z/\w in \ptsXFO {
		\draw[fill = black!50] (\scale*\x,\scale*\y) circle [radius = 0.1] node[label = {[label distance = 0.05 cm]\w: $\z$}] (\z) {}; 
	}
 
\foreach \x/\y/\z in \ptsXFOUnlabel {
		\draw[fill = black!50] (\scale*\x,\scale*\y) circle [radius = 0.1] node (\z) {}; 
	}

\foreach \x/\y in \edgesXFO { \draw (\x) -- (\y); }  


\end{tikzpicture}
\subcaption{The graph $
X_{41}$} \label{graph:XFO}
\end{center}
\end{subfigure}
\caption{The finite graphs appearing in Theorem \ref{thm:forbidden_subgraph_AT_free}}
\label{fig:finite_AT_graphs}
\end{figure}


\def\edCn{2}
\def\scale{0.75}

\def\ptsCn{-.433884*\edCn/-.900969*\edCn/1/180,
.433884*\edCn/-.900969*\edCn/2/0,
.974928*\edCn/-.222521*\edCn/3/0,
.781831*\edCn/.62349*\edCn/4/0,
-.781831*\edCn/.62349*\edCn/n-1/180,
-.974928*\edCn/-.222521*\edCn/n/180
}
    
\def\ptsCnUnlabel{0*\edCn/1*\edCn/5}

\def\edgesCn{
    1/2,
    1/n,
    2/3,
    3/4,
    5/n-1,
    n-1/n}



\def\edKn{1.5}

\def\ptsKn{-2*\edKn/0*\edKn/1/270,
-1*\edKn/0*\edKn/2/270,
0*\edKn/0*\edKn/3/270,
2*\edKn/0*\edKn/n+1/270,
-3*\edKn/0*\edKn/v_1/270,
3*\edKn/0*\edKn/v_2/270,
0*\edKn/1.732050*\edKn/v_3/0}
    
\def\ptsKnUnlabel{
0*\edKn/0.866025*\edKn/-1}

\def\edgesKn{
    v_1/1,
    1/2,
    2/3,
    3/4,
    4/n+1,
    n+1/v_2,
    -1/1,
    -1/2,
    -1/3,
    -1/4,
    -1/n+1,
    v_3/-1}



\def\edLn{5}

\def\ptsLn{
-3*.125*\edLn/0*\edLn/1/270,
-1*.125*\edLn/0*\edLn/2/270,
3*.125*\edLn/0*\edLn/n+1/270,
-5*.125*\edLnO/.375*\edLnO/v_1/270,
5*.125*\edLnO/.375*\edLnO/v_2/270,
0*\edLnO/6*.125*\edLnO/v_3/0}
    
\def\ptsLnUnlabel{
-2*.125*\edLn/.375*\edLn/6,
2*.125*\edLn/.375*\edLn/7}

\def\edgesLn{
    1/2,
    2/3,
    3/n+1,
    v_1/6,
    6/7,
    7/v_2,
    6/v_3,
    7/v_3,
    1/6,
    1/7,
    2/6,
    2/7,
    3/6,
    3/7,
    n+1/6,
    n+1/7,
    1/v_1,
    n+1/v_2}



\def\edLnO{5}

\def\ptsLnO{
-3*.125*\edLnO/0*\edLnO/1/270,
-1*.125*\edLnO/0*\edLnO/2/270,
3*.125*\edLnO/0*\edLnO/n+1/270,
-5*.125*\edLnO/.375*\edLnO/v_1/270,
5*.125*\edLnO/.375*\edLnO/v_2/270,
0*\edLnO/6*.125*\edLnO/v_3/0}
    
\def\ptsLnOUnlabel{
-2*.125*\edLnO/.375*\edLnO/6,
2*.125*\edLnO/.375*\edLnO/7}

\def\edgesLnO{
    1/2,
    2/3,
    3/n+1,
    v_1/6,
    7/v_2,
    6/v_3,
    7/v_3,
    1/6,
    1/7,
    2/6,
    2/7,
    3/6,
    3/7,
    n+1/6,
    n+1/7,
    1/v_1,
    n+1/v_2}


\begin{figure}[ht]
\begin{subfigure}{.45\linewidth}
\begin{center}
\begin{tikzpicture}
	\foreach \x/\y/\z/\w in \ptsCn {
		\draw[fill = black!50] (\scale*\x,\scale*\y) circle [radius = 0.1] node[label = {[label distance = 0.05 cm]\w: $\z$}] (\z) {}; 
	}
 
\foreach \x/\y/\z in \ptsCnUnlabel {
		\draw[fill = black!50] (\scale*\x,\scale*\y) circle [radius = 0.1] node (\z) {}; 
	}
 
\foreach \x/\y in \edgesCn { \draw (\x) -- (\y); }  

\draw (4) -- (5) node [pos = 0.5, sloped, above] {{\large $\mathbf{\cdots}$}};

\end{tikzpicture}
\subcaption{The graph $C_{n}$, $n\geq 6$} \label{graph:Cn}
\end{center}
\end{subfigure}
%
%
%
\begin{subfigure}{.45\linewidth}
\begin{center}
\begin{tikzpicture}
	\foreach \x/\y/\z/\w in \ptsKn {
		\draw[fill = black!50] (\scale*\x,\scale*\y) circle [radius = 0.1] node[label = {[label distance = 0.05 cm]\w: $\z$}] (\z) {}; 
	}
 
\foreach \x/\y/\z in \ptsKnUnlabel {
		\draw[fill = black!50] (\scale*\x,\scale*\y) circle [radius = 0.1] node (\z) {}; 
	}

 \draw[fill = black!50] (\scale*1*\edKn,\scale*0*\edKn) circle [radius = 0.1] node[label = {[label distance = 0.05 cm]270: $\cdots$}] (4) {};
 
\foreach \x/\y in \edgesKn { \draw (\x) -- (\y); }  


\end{tikzpicture}
\subcaption{The graph $XF_2^{n}$, $n\geq 1$} \label{graph:Kn}
\end{center}
\end{subfigure}
\hfill 
%
%
%
\begin{subfigure}{.45\linewidth}
\begin{center}
\begin{tikzpicture}
	\foreach \x/\y/\z/\w in \ptsLn {
		\draw[fill = black!50] (\scale*\x,\scale*\y) circle [radius = 0.1] node[label = {[label distance = 0.05 cm]\w: $\z$}] (\z) {}; 
	}
 
\foreach \x/\y/\z in \ptsLnUnlabel {
		\draw[fill = black!50] (\scale*\x,\scale*\y) circle [radius = 0.1] node (\z) {}; 
	}

 \draw[fill = black!50] (\scale*.125*\edLn,\scale*0*\edLn) circle [radius = 0.1] node[label = {[label distance = 0.05 cm]270: $\cdots$}] (3) {};
  
\foreach \x/\y in \edgesLn { \draw (\x) -- (\y); }  


\end{tikzpicture}
\subcaption{The graph $XF_3^{n}$, $n\geq 0$} \label{graph:Ln}
\end{center}
\end{subfigure}
%
%
\begin{subfigure}{.45\linewidth}
\begin{center}
\begin{tikzpicture}
	\foreach \x/\y/\z/\w in \ptsLnO {
		\draw[fill = black!50] (\scale*\x,\scale*\y) circle [radius = 0.1] node[label = {[label distance = 0.05 cm]\w: $\z$}] (\z) {}; 
	}
 
\foreach \x/\y/\z in \ptsLnOUnlabel {
		\draw[fill = black!50] (\scale*\x,\scale*\y) circle [radius = 0.1] node (\z) {}; 
	}

 \draw[fill = black!50] (\scale*.125*\edLnO,\scale*0*\edLnO) circle [radius = 0.1] node[label = {[label distance = 0.05 cm]270: $\cdots$}] (3) {};
  
\foreach \x/\y in \edgesLnO { \draw (\x) -- (\y); }  


\end{tikzpicture}
\subcaption{The graph $XF_4^{n}$, $n\geq 0$} \label{graph:LnO}
\end{center}
\end{subfigure}
\caption{Infinite families appearing in Theorem \ref{thm:forbidden_subgraph_AT_free}}
\label{fig:reg_graphs}
\end{figure}

\bcor
\label{cor:at_triple_implies_non_f_purity}
If $R_{G}$ is F-pure in some positive characteristic $p > 0$, then $G$ is AT-free.
\ecor

\bproof
We prove the contrapositive. Let $G$ be a graph containing an asteroidal triple. Theorem \ref{thm:forbidden_subgraph_AT_free} implies that $G$ contains one of the listed subgraph as an induced subgraph. It is readily verified that every graph appearing in the list in Theorem \ref{thm:forbidden_subgraph_AT_free} satisfies the hypotheses of Theorem \ref{thm:non_F_purity}. Consequently, the binomial edge ideal of every graph listed in Theorem \ref{thm:forbidden_subgraph_AT_free} is not F-pure in any positive characteristic. Lemma \ref{lem:F_split_descends_to_induced_subgraph} implies that $S/J_{G}$ is not F-pure in any positive characteristic.
\eproof

\bcor
Matsuda's Conjecture \ref{conj:bei_are_eventually_F_pure} is false.
\ecor 

\section{Matsuda's Weakly Closed Conjecture}
\label{sec:Matsuda_conj}

In this section, we show that to prove Conjecture \ref{conj:weakly_closed_equiv_F_pure_in_char_2}, it suffices to demonstrate that every graph appearing in Theorem \ref{thm:gallai} is not F-pure. We then show that the sporadic graphs and the graphs belonging to the regular families in Theorem \ref{thm:gallai} do not realize an F-pure binomial edge ideal in any positive characteristic. In Section \ref{sec:co_regular_not_f_pure}, we establish that the binomial edge ideals coming from the co-regular families are not F-pure.

\subsection{Weakly Closed Graphs and Gallai's Theorem}

In \cite{herzog2010binomial}, the authors gave an algebraic characterization of \textbf{closed graphs} {\cite[Theorem 1.1]{herzog2010binomial}}, a notion that had independently appeared in graph theory under the name \textbf{proper interval graphs} \cite[Theorem 2.4]{crupi2014closedProperInteval}. They showed that the generators $f_{i,j}$ form a reduced Gr\"obner basis of $J_{G}$ if and only if $G$ is a closed graph. Various equivalent characterizations of $G$ being closed are well-known.

\bthm
\label{thm:closed_graphs}
Let $G$ be a graph on the vertices $[n]$, then the following are equivalent:
\begin{enumerate}
    \item $G$ is closed,
    \item Let $i < j$ with $\{i,j\} \in E(G)$. Then, for all integers $k$ with $i < k < j$, $\{i,k\} \in E(G)$ and $\{k,j\} \in E(G)$. (See {\cite[Proposition 4.8]{crupi2010binomial}}.)
    \item $G$ is a proper interval graph. (See {\cite[Theorem  2.4]{crupi2014closedProperInteval}}.)
    \item $G$ is chordal, claw-free, net-free, and tent-free. (See {\cite[Theorem 7.10]{herzog2018binomial}}.)
\end{enumerate}
\ethm 

Matsuda introduced the notion of weakly closed graphs and showed that weakly closed graphs define F-pure ideals.

\bdefn[\cite{matsuda2018weakly}]
\label{defn:weakly_closed}
Let $G$ be a graph. We say that $G$ is \textbf{weakly closed} if and only if there exists a labeling of the vertices of $G$ such that for every edge $\{i,j\}$ of $G$ with $i < j$, it satisfies that for every integer $k$ with $i < k < j$ that $\{i,k\}$ is an edge of $G$ or $\{k,j\}$ is an edge of $G$.
\edefn 

\bthm[{\cite[Theorem 2.3]{matsuda2018weakly}}]
\label{thm:weakly_closed_are_F_pure}
Let $G$ be a graph, and let $\k$ be any field of positive characteristic. If $G$ is a weakly closed graph, then $R/J_{G}$ is F-pure.
\ethm


The forward implication of Conjecture \ref{conj:weakly_closed_equiv_F_pure_in_char_2} follows from Theorem \ref{thm:weakly_closed_are_F_pure}. However, the reverse implication has remained open.

Matsuda provided an analogue of Theorem \ref{thm:closed_graphs}, which had previously been observed by Kratsch and Stewart \cite[p. 402]{KS93}; see also \cite[Proposition 5.1]{COS93}.

\bthm[{\cite[Theorem 1.9]{matsuda2018weakly}}]
\label{thm:weakly_closed_equivalent_co_comparable}
Let $G$ be a graph. Then, $G$ is weakly closed if and only if $G$ is co-comparability (i.e., $\ol{G}$ is a comparability graph). 
\ethm

\noindent A graph is a \textbf{comparability} graph if there is a partial order on the vertices such that edges exactly correspond to distinct comparable vertices in the partial order. 
 Matsuda remarks that comparability, and hence co-comparability, graphs are characterized by excluding a minimal list of induced subgraphs. This is the celebrated result of Gallai \cite{gallai1967transitiv}. A good presentation of Gallai's result is given in Trotter \cite{trotter1992combinatorics}. A readily accessible online presentation of the forbiden subgraph characterization of co-comparability graphs can be found at \cite{isgci}. In Theorem \ref{thm:gallai} below, we follow the notation for the graphs as they appear in \cite{isgci} for ease of reference; see Figures~\ref{fig:finite_AT_graphs},  \ref{fig:reg_graphs}, \ref{graph:Jn}, \ref{graph:JnO}, and \ref{graph:JnT}.

\bthm[{Gallai's Theorem \cite{gallai1967transitiv}, \cite[Theorem 2.1]{trotter1992combinatorics}},\cite{isgci}]
\label{thm:gallai}
Let $G$ be a graph. Then, $G$ is co-comparability if and only if $G$ does not contain any of the following graphs as induced subgraphs:
\begin{enumerate}
    \item the sporadic graphs: 
        \begin{multicols}{3}
            \begin{enumerate}
                \item $T_{2}$
                \item $X_{31}$
                \item $X_{2}$
                \item $X_{30}$
                \item $X_{3}$
                \item $X_{32}$
                \item $X_{33}$
                \item $X_{35}$
                \item $X_{34}$
                \item $X_{36}$
                \item[] 
                \item[]
            \end{enumerate}
        \end{multicols}
    \item the regular families:
        \begin{enumerate}
            \item $C_{n}$ for $n\geq 6$
            \item $XF_{2}^{n+1}$ for $n \geq 1$
            \item $XF_{3}^{n+1}$ for $n \geq 0$ 
            \item $XF_{4}^{n+1}$ for $n \geq 0$\\
        \end{enumerate}
    \item the co-regular families:
        \begin{enumerate}
            \item the complement of $C_{2n+1}$ for $n \geq 2$ (odd anti-hole)
            \item $\mathrm{co\!-\!XF}_{1}^{2n+3}$ for $n \geq 0$ 
            \item $\mathrm{co\!-\!XF}_{5}^{2n+3}$ for $n \geq 0$
            \item $\mathrm{co\!-\!XF}_{6}^{2n+2}$ for $n \geq 0$ 
        \end{enumerate}
\end{enumerate}
\ethm

\bprop 
\label{prop:reduction_matsuda_to_finite_list}
If every graph belonging to the list appearing in Theorem \ref{thm:gallai} is not F-pure, then Matsuda's Conjecture \ref{conj:weakly_closed_equiv_F_pure_in_char_2} holds.
\eprop 

\bproof
Mastuda's Theorem \ref{thm:weakly_closed_are_F_pure} implies that every weakly closed graph defines an F-pure ring in any characteristic, and hence, in particular, in characteristic $2$. To prove the converse of Conjecture \ref{conj:weakly_closed_equiv_F_pure_in_char_2}, it suffices to show that if a graph $G$ is not weakly closed, then $R_{G}$ is not an F-pure ring in characteristic $2$. By Theorems \ref{thm:weakly_closed_equivalent_co_comparable} and \ref{thm:gallai}, we have that if $G$ is not weakly closed, then $G$ contains as an induced subgraph $H$ where $H$ belongs to the list of graphs in Theorem \ref{thm:gallai}. By Lemma \ref{lem:F_split_descends_to_induced_subgraph}, if $R_{H}$ is not an F-pure ring in characteristic $2$, then $R_{G}$ cannot be an F-pure ring in characteristic $2$ either.
\eproof

We can now prove our first main result, delaying until the later sections some technical computations.

\bproof[Proof of Theorem \ref{thm:matsudas_conjecture_is_true}]
The graphs appearing in section (1) and (2) of Theorem \ref{thm:gallai} (the ``sporadic graphs" and ``regular families", respectively)  are not F-pure in any positive characteristic, since each of these graphs contain an asteroidal triple (Theorem \ref{thm:forbidden_subgraph_AT_free}, Corollary \ref{cor:at_triple_implies_non_f_purity}). 

Theorems \ref{thm:Cn_not_F_pure}, \ref{thm:Jn_not_F_pure}, \ref{thm:Jn1_not_F_pure}, and \ref{thm:Jn2_not_F_pure} establish that the graphs appearing in section (3) of Theorem \ref{thm:gallai} (the ``co-regular families") are not F-pure in characteristic $2$. Matsuda's conjecture \ref{conj:weakly_closed_equiv_F_pure_in_char_2} now follows from Proposition \ref{prop:reduction_matsuda_to_finite_list}.
\eproof

Proving that the co-regular families define non-F-pure binomial edge ideals in characteristic 2 will occupy us for most of the remainder of this paper.  First let us point out that our results completely classify the F-purity of all chordal graphs in every characteristic.

\begin{cor}
\label{cor:char_F_pure_bei}
    Let $G$ be a chordal graph.  The following are equivalent:
    \begin{enumerate}
     \item $R_G$ is F-pure in all characteristics $p > 0$.
    \item $R_G$ is F-pure in some characteristic $p > 0$.
    \item $G$ is AT-free.
    \item $G$ is weakly closed.
    \end{enumerate}
\end{cor}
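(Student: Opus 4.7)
The plan is to close the cycle $(1) \Rightarrow (2) \Rightarrow (3) \Rightarrow (4) \Rightarrow (1)$. Three of these links are immediate from results already established in the paper: $(1) \Rightarrow (2)$ is trivial; $(2) \Rightarrow (3)$ is the contrapositive of Corollary~\ref{cor:at_triple_implies_non_f_purity}, which holds for every graph with no chordality assumption; and $(4) \Rightarrow (1)$ is Matsuda's Theorem~\ref{thm:weakly_closed_are_F_pure}. Thus the only implication that actually uses the chordality hypothesis is $(3) \Rightarrow (4)$: a chordal AT-free graph is weakly closed.

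For $(3) \Rightarrow (4)$ I would give the slick classical argument. First invoke the Lekkerkerker--Boland theorem, which characterizes interval graphs as exactly the chordal AT-free graphs; then invoke the Gilmore--Hoffman theorem, which characterizes interval graphs as exactly the chordal co-comparability graphs. Composing these two characterizations shows that a chordal AT-free graph is co-comparability, which by Theorem~\ref{thm:weakly_closed_equivalent_co_comparable} is equivalent to being weakly closed. This finishes the cycle of implications.

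If one prefers a self-contained approach using only Gallai's forbidden subgraph characterization (Theorem~\ref{thm:gallai}), the task becomes showing that a chordal AT-free graph avoids every induced subgraph on Gallai's list. The sporadic graphs and regular families in that list all carry asteroidal triples---this is precisely how they are excluded in the proof of Theorem~\ref{thm:matsudas_conjecture_is_true}---so AT-freeness handles them immediately. The work then concentrates on the three co-regular families, which must instead be excluded by chordality; the main obstacle along this route is verifying that every member of $\{\overline{C_{2n+1}}\}_{n\ge 2}$, $\{\mathrm{co\!-\!XF}_{1}^{2n+3}\}_{n\ge 0}$, $\{\mathrm{co\!-\!XF}_{5}^{2n+3}\}_{n\ge 0}$, and $\{\mathrm{co\!-\!XF}_{6}^{2n+2}\}_{n\ge 0}$ contains a chordless cycle of length at least four. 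For the odd anti-holes this is easy: $\overline{C_{5}}=C_{5}$ is itself a $5$-cycle, and for $n\ge 3$ the vertices $\{1,3,5,7\}$ of $\overline{C_{2n+1}}$ induce a chordless $C_{4}$. A short case inspection on the generators of each co-XF family finishes the argument. The classical route via Lekkerkerker--Boland and Gilmore--Hoffman sidesteps this case analysis entirely, and is what I would use in a final write-up to keep the proof as short as possible.
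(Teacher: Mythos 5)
Your primary route is essentially the same as the paper's: the implications $(1)\Rightarrow(2)$, $(2)\Rightarrow(3)$, and $(4)\Rightarrow(1)$ are handled identically, and for $(3)\Rightarrow(4)$ the paper simply cites the fact that a chordal graph is AT-free if and only if it is co-comparability, which is precisely what your Lekkerkerker--Boland plus Gilmore--Hoffman combination yields (the paper itself records the Lekkerkerker--Boland connection to interval graphs in the remark immediately following the corollary). One small slip: $(2)\Rightarrow(3)$ is the direct statement of Corollary~\ref{cor:at_triple_implies_non_f_purity}, not its contrapositive.

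A caution about your ``self-contained'' alternative: the claim that $\{1,3,5,7\}$ induces a chordless $C_4$ in $\overline{C_{2n+1}}$ for $n\ge 3$ is false. For $2n+1\ge 9$ those four vertices are pairwise nonadjacent in $C_{2n+1}$, so they form a $K_4$ in the complement; and in $\overline{C_7}$ they form $K_4$ minus the edge $\{1,7\}$, which still contains no induced $C_4$. One does obtain an induced $C_4$ in $\overline{C_{2n+1}}$ from, e.g., $\{1,2,4,5\}$ (the two ``missing'' edges $\{1,2\}$ and $\{4,5\}$ are the nonadjacent pairs), but as written your exhibited witness does not work, and the corresponding ``short case inspection'' for the $\mathrm{co\!-\!XF}$ families is also left unverified. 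Since you explicitly defer to the Lekkerkerker--Boland/Gilmore--Hoffman route for the final write-up, this does not affect the proof you would actually submit, but the sketch of the alternative should not be quoted as is.
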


\begin{proof}
    $\mathit{(1) \Rightarrow (2)}$ is obvious.  $\mathit{(2) \Rightarrow (3)}$ follows from Corollary~\ref{cor:at_triple_implies_non_f_purity}.  $\mathit{(3) \Rightarrow (4)}$ follows from Gallai's Theorem~\ref{thm:gallai} and Matsuda's Theorem~\ref{thm:weakly_closed_equivalent_co_comparable}, since chordal graphs are AT-free if and only if they are co-comparability \cite{isgciATFreeANDChordal}, while $\mathit{(4) \Rightarrow (1)}$ follows from Matsuda's Theorem~\ref{thm:weakly_closed_are_F_pure}.
\end{proof}

The five cycle shows that the chordal hypothesis cannot be removed.  The authors thank Matthew Mastroeni for pointing out this observation to us.  We also note that by \cite[Theorem 3]{LB62}, chordal AT-free graphs are exactly intervals graphs.

\section{Computation of Colon Ideals}
\label{sec:computation_colon_ideals}

In this section, we introduce two families of ideals that will appear among the minimal primes of the co-regular families of binomial edge ideals. Then we compute various colon ideals that are relevant for the application of Fedder's criterion. Of the results in this section, only Theorem~\ref{thrm:colon_minl_prime} will be used in later sections; the remaining technical statements serve solely to establish this theorem.

\nota 
Let $A$ and $B$ be disjoint subsets of $[n]$. As above, we denote by $P(A,B)$ the ideal
\begin{align*}
(\{x_{i},y_{i}\}_{i\in A}) + ( \{f_{i,j} \}_{i,j \in B} )
\end{align*}
where $f_{i,j} := x_{i}y_{j} - x_{j}y_{i}$. We define the monomial $\omega_{A}$ as 
\begin{align*}
\omega_{A} := \prod_{i \in A} x_{i} y_{i}.
\end{align*}
When $A = \varnothing$, we utilize the convention that $\omega_{A} = 1$. When $B = \{a,c,d\}$ with $a$, $c$, and $d$ distinct integers, we define the ideal $L_{P}$ as
\begin{align*}
    L_{P} := (f_{a,c}f_{a,d}\, \omega_{A},f_{a,c}f_{c,d}\, \omega_{A},f_{a,d}f_{c,d} \, \omega_{A}).
\end{align*}

We need the following preparatory lemma.
\blem
\label{lem:descent}
Fix an integer $n$ and integer $b \in [n]$. Let $A$ and $B$ be disjoint subsets of $[n]$. We denote by $P$ the prime ideal $P(A,B)$. Pick $k \in A$ and $r \in \k[x_{i},y_{i} \mid i \in [n] \setminus \{k\}]$. If 
\begin{align*}
    \alpha \cdot r \in P^{[2]}
\end{align*}
for some $\alpha \in \{x_{k}, y_{k}, x_{k}y_{k}\}$, then 
\begin{align*}
    r \in P(A\setminus \{k\},B)^{[2]}.
\end{align*}
If $b \notin A \cup B$ and 
\begin{align*}
    f_{k,b} \cdot r \in P^{[2]},
\end{align*}
then
\begin{align*}
    r \in P(A\setminus \{k\},B)^{[2]}.
\end{align*}
\elem 

\bproof
Since the element $\alpha \cdot r$ is not divisible by $x_{k}^{2}$ or by $y_{k}^{2}$, it follows that 
\begin{equation*}
    \label{eqn:descent_0}
    \alpha \cdot r \in P(A\setminus \{k\},B)^{[2]}.
\end{equation*}
This still holds after evaluating $x_{k}$ and $y_{k}$ at one. When $\alpha \in \{x_{k},y_{k},x_{k}y_{k}\}$, the lemma follows. When $\alpha = f_{k,b}$, we obtain that 
\begin{align*}
    (x_{b} + y_{b}) \cdot r \in P(A\setminus \{k\},B)^{[2]}.
\end{align*}
Since $x_{b} + y_{b}$ is a regular element on $P(A\setminus \{k\},B)^{[2]}$, it follows that $r \in P(A\setminus \{k\},B)^{[2]}$, which completes the proof.
\eproof

\blem
\label{lem:colon_computation_T_empty}
Fix a positive integer $n$ and distinct integers $a$, $b$, and $c$ belonging to $[n]$. Let $A \subset [n] \setminus \{a,b,c\}$, and let $B \subset [n] \setminus (A \cup \{b\})$. Denote by $P$ the ideal $P(A,B)$. Let $G$ be a graph satisfying the following conditions:
\begin{enumerate}
\item $A \subset V(G) \subset [n]$,
\item For all $k \in A$, $\{k,b\} \in E(G)$,
\item For all $k \in A$, $\{k,a\} \in E(G)$ or $\{k,c\} \in E(G)$,
\item For every edge $e$ of $G$, $e = \{k,j\}$ for some vertex $k \in A$ and $j \in V(G) \setminus \{k\}$.
\end{enumerate}
Then, we have that
\begin{equation}
\label{eqn:colon_ideal_of_minl_prime_one_comp_w_graph_0}
P^{[2]}:J_{G} = P^{[2]} + (\omega_{A}).
\end{equation}
\elem 

\bproof
We first prove the reverse inclusion. It suffices to prove for every edge $e$ of $G$ that $f_{e} \, \omega_{A} \in P^{[2]}$. Since $e$ is of the form  $\{k,j\}$ for some vertex $k \in A$ and $j \in V(G) \setminus \{k\}$, it follows that $f_{k,j} \omega_{A} \in (x_{k}^{2},y_{k}^{2}) \subset P^{[2]}$.

We prove the forward inclusion by induction on $\#A$. If $\#A = 0$, then $\omega_{A} = 1$ and $J_{G} = (0)$ by condition (4), in which case equation 
\eqref{eqn:colon_ideal_of_minl_prime_one_comp_w_graph_0} holds. Hence, we may suppose that $\#A > 0$, and that equation \eqref{eqn:colon_ideal_of_minl_prime_one_comp_w_graph_0} holds for all graphs $G'$ satisfying conditions (1), (2), (3), and (4) and for all prime ideals $P(A',B')$ as in the statement of the lemma whenever $\# A' < \# A$.  

Pick $r \in P^{[2]}:J_{G}$. After subtracting from $r$ those monomials belonging to the support of $r$ and contained in the ideal $( \{x_{j}^{2},y_{j}^{2} \mid j \in A\})$, we may suppose that every monomial belonging to the support of $r$ is not contained in $P^{[2]}$. Next, pick $k \in A$. Since every monomial in the support of $r$ is not divisible by $x_{k}^{2}$ or $y_{k}^{2}$, we may express $r$ as:
\begin{equation}
    \label{eqn:colon_ideal_of_minl_prime_one_comp_w_graph_1}
    r = x_{k}y_{k} r_{1}  + x_{k}r_{2} + y_{k} r_{3} + r_{4}
\end{equation}
for some $r_{1}, r_{2}, r_{3}, r_{4} \in \k[x_{i},y_{i} \mid i \in [n] \setminus \{k\}]$.
The ideals $J_{G}$ and $P^{[2]}$ are both $\N^{n}$-graded ideals, where $n = \# V(G)$ and $\deg(x_{i}) = \deg(y_{i}) = e_{i}$ is the $i$-th standard basis element. Hence, the ideal $P^{[2]}:J_{G}$ is also $\N^{n}$-graded. Thus, for $s \in \{x_{k}y_{k} r_{1}, x_{k}r_{2} + y_{k} r_{3}, r_{4} \}$, we have that
\begin{align*}
    s \in P^{[2]}:J_{G}.
\end{align*}
It now suffices to show that each element of $\{x_{k}y_{k} r_{1}, x_{k}r_{2} + y_{k} r_{3}, r_{4} \}$ belongs to $P^{[2]} + (\omega_{A})$.

First, we show that $x_{k}y_{k} r_{1} \in P^{[2]} + (\omega_{A})$. By applying Lemma \ref{lem:descent} to $x_{k}y_{k}r_{1}\cdot f_{i,j}$ for every edge $\{i,j\} \in E(G \setminus \{k\})$, it follows that 
\begin{align*}
    r_{1} \in P(A\setminus\{k\},B)^{[2]}:J_{G\setminus k}.
\end{align*}
Since $G \setminus k$, $A' := A\setminus k$, and $B' := B$ satisfy the hypotheses of Lemma \ref{lem:colon_computation_T_empty}, it follows by induction hypothesis that 
\begin{align*}
    r_{1} \in P(A\setminus \{k\},B)^{[2]} + (\omega_{A\setminus \{k\}}).
\end{align*}
Thus, $x_{k}y_{k}r_{1} \in P^{[2]} + (\omega_{A})$. 

Next, we show that $r_{4} \in P^{[2]} + (\omega_{A})$. We observe that 
\begin{align*}
    r_{4} \cdot f_{k,b}  \in P^{[2]}.
\end{align*}
By Lemma \ref{lem:descent}, it follows that $r_{4} \in P^{[2]}$.

Finally, we show that $x_{k}r_{2} + y_{k} r_{3} \in P^{[2]} + (\omega_{A})$. We consider the element 
\begin{align*}
    (x_{k}r_{2} + y_{k} r_{3})\cdot f_{k,b} \in P^{[2]}.
\end{align*}
After expanding this product and subtracting terms divisible by $x_{k}^{2}$ or by $y_{k}^{2}$, we obtain that 
\begin{align*}
    x_{k}y_{k} (x_{b}r_{2} + y_{b}r_{3}) \in P^{[2]}.
\end{align*}
Lemma \ref{lem:descent} implies that 
\begin{align*}
    x_{b}r_{2} + y_{b}r_{3} \in P(A\setminus \{k\},B)^{[2]}.
\end{align*}
Write 
\begin{equation}
\label{eqn:colon_computation_T_empty_mixed_case_0}
\begin{aligned}
    r_{2} &= y_{b}r_{2}' + r_{2}'' \\
    r_{3} &= x_{b}r_{3}' + r_{3}''
\end{aligned}
\end{equation}
where every monomial in the support of $r_{2}''$ (respectively $r_{3}''$) is not divisible by $y_{b}$ (respectively $x_{b}$). Hence, we have that 
\begin{equation}
    \label{eqn:colon_computation_T_empty_mixed_case}
    x_{b}y_{b}(r_{2}' + r_{3}') + x_{b}r_{2}'' + y_{b}r_{3}'' \in P(A \setminus \{k\},B)^{[2]}.
\end{equation}
In  \eqref{eqn:colon_computation_T_empty_mixed_case}, setting $x_{b} = 0$ implies that 
\begin{align*}
    y_{b}r_{3}'' \in P(A\setminus \{k\},B)^{[2]}.
\end{align*}
Since $y_{b}$ is a regular element on $P(A\setminus \{k\},B)^{[2]}$, it follows that 
\begin{equation}
    \label{eqn:colon_computation_T_empty_mixed_case_2}
    r_{3}'' \in P(A\setminus \{k\},B)^{[2]}.
\end{equation}
Likewise, setting $y_{b} = 0$ in  \eqref{eqn:colon_computation_T_empty_mixed_case} and using that $x_{b}$ is a regular element on $P(A\setminus \{k\},B)^{[2]}$, it follows that 
\begin{equation}
    \label{eqn:colon_computation_T_empty_mixed_case_3}
    r_{2}'' \in P(A\setminus \{k\},B)^{[2]}.
\end{equation}
Combining \eqref{eqn:colon_computation_T_empty_mixed_case}, \eqref{eqn:colon_computation_T_empty_mixed_case_2}, and \eqref{eqn:colon_computation_T_empty_mixed_case_3} together with the fact that $x_{b}y_{b}$ is a regular element on $P(A\setminus \{k\},B)^{[2]}$ implies
\begin{equation}
    \label{eqn:colon_computation_T_empty_mixed_case_4}
    r_{2}' + r_{3}' \in P(A\setminus \{k\},B)^{[2]}.
\end{equation}
From \eqref{eqn:colon_computation_T_empty_mixed_case_0}, \eqref{eqn:colon_computation_T_empty_mixed_case_2},  \eqref{eqn:colon_computation_T_empty_mixed_case_3}, and \eqref{eqn:colon_computation_T_empty_mixed_case_4} we have that after subtracting terms belonging to $P(A\setminus \{k\},B)^{[2]}$ that we may assume that
\begin{align*}
    r_{2} &= y_{b} r_{2}' \\
    r_{3} &= -x_{b} r_{2}'.
\end{align*}
Without loss of generality, we may assume by condition (3) that $\{a,k\} \in E(G)$. It follows that
\begin{align*}
     (x_{k} r_{2} + y_{k} r_{3}) f_{a,k} &\in P^{[2]} \\
    \implies  (x_{k} y_{b} r_{2}' - y_{k} x_{b} r_{2}') f_{a,k} &\in P^{[2]} \\
    \implies x_{k}y_{k} (x_{a} y_{b} - y_{a} x_{b}) r_{2}' &\in P^{[2]}.
\end{align*}
Since $f_{a,b}$ is a regular element on $P^{[2]}$, it follows that
\begin{align*}
    x_{k}y_{k} r_{2}' \in P^{[2]}.
\end{align*}
Lemma \ref{lem:descent} implies that $r_{2}' \in P(A \setminus \{k\}, B)^{[2]} \subset P^{[2]}$. It thus follows that $r_{2}$ and $r_{3}$ belong to $P^{[2]}$, which completes the proof.
\eproof

\blem 
\label{lem:intersection_ideals}
Fix an integer $n$. Let $A$ and $B$ be disjoint subsets of $[n]$. We denote by $P$ the ideal $P(A,B)$ of $R := \k[x_{i},y_{i} \mid i \in [n]]$. We denote by $Q$ any ideal of $R$ which can be realized as the extension of an ideal belonging to $\k[x_{i},y_{i} \mid i \in B]$. Then,
\begin{align*}
    \left( P^{[2]} + Q \right) \cap \left( P^{[2]} + (\omega_{A}) \right) = P^{[2]} + Q \cdot (\omega_{A}).
\end{align*}
\elem 

\bproof
We prove the forward inclusion, as the reverse inclusion is clear. Let $r \in P^{[2]} + (\omega_{A})$. Then we may express $r$ as $r = r_{1} + p \, \omega_{A}$, with $p \in \kk[x_{j},y_{j} \mid j \in [n] \setminus A]$ and $r_{1} \in P^{[2]}$. Since $r \in P^{[2]} + Q$, it follows that $p \, \omega_{A} \in P^{[2]} + Q$. For degree reasons, it follows that
\begin{align*}
    p \, \omega_{A} \in P(\varnothing,B)^{[2]} + Q.
\end{align*}
Since $\omega_{A}$ is a regular element on $P(\varnothing,B)^{[2]} + Q$, it follows that $p \in P^{[2]} + Q$, which completes the proof.
\eproof

\blem 
\label{lem:reduction_of_colon_computation}
Fix an integer $n$. Let $A$ and $B$ be disjoint subsets of $[n]$. We denote by $P$ the ideal $P(A,B)$ of $R := \k[x_{i},y_{i} \mid i \in [n]]$. Let $G$ be any graph such that $V(G) \subset B$. Let $J_{G}$ denote the extension of the binomial edge ideal of $G$ to $R$. Then,
\begin{align*}
    P^{[2]}:J_{G} = P^{[2]} + P(\varnothing,B)^{[2]}:J_{G}.
\end{align*}
\elem 

\bproof
We prove the forward inclusion, as the reverse inclusion is clear. Let $r \in P^{[2]}:J_{G}$. Subtract from $r$ all monomials belonging to $P(A,\varnothing)^{[2]}$. Since $V(G) \subset B$, it follows for degree reasons that for all $\{i,j\} \in E(G)$ that $r\cdot f_{i,j} \in P(\varnothing,B)^{[2]}$, which completes the proof.
\eproof

\blem
\label{lem:colon_three_binomials}
Let $a$, $c$, and $d$ denote distinct positive integers. Let $R := \k[x_{i},y_{i} \mid i \in \{a,c,d\}]$, then
\begin{equation}
    \label{eqn:colon_three_binomials_1}
    (f_{a,c}^{2}):(f_{a,c}) = (f_{a,c}),
\end{equation}
and 
\begin{equation}
    \label{eqn:colon_three_binomials_2}
    (f_{a,c},f_{a,d},f_{c,d})^{[2]} : (f_{a,c},f_{c,d}) = (f_{a,c},f_{a,d},f_{c,d})^{[2]} + (f_{a,c}f_{a,d}, f_{a,c}f_{c,d}, f_{a,d}f_{c,d}).
\end{equation}
\elem 

\bproof
The proof of equation \eqref{eqn:colon_three_binomials_1} is clear. The proof of equation \eqref{eqn:colon_three_binomials_2} can be verified via the computer program Macaulay2.
\eproof

\brem
Macaulay2 computations show that Equation \eqref{eqn:colon_three_binomials_2} need not hold when $\fchar(\k) \neq 2$.
\erem 

\bthm
\label{thrm:colon_minl_prime}
Fix a positive integer $n$ and distinct integers $a$, $b$, $c$, and $d$ belonging to $[n]$. Let $A \subset [n] \setminus \{a,b,c,d\}$. Let $B$ be either the set $\{a,c\}$ or the set $\{a,c,d\}$. Denote by $P$ the ideal $P(A,B)$. Let $G$ be a graph satisfying the following conditions:
\begin{enumerate}
\item $A \subset V(G) \subset [n]$,
\item For all $k \in A$, $\{k,b\} \in E(G)$,
\item For all $k \in A$, $\{k,a\} \in E(G)$ or $\{k,c\} \in E(G)$,
\item $\{a,c\} \in E(G)$ and $\{c,d\} \in E(G)$,
\item For every edge $e$ of $G$, $e$ has one of the following forms: $e = \{a,c\}$, $e = \{c,d\}$, or $e = \{k,j\}$ for some vertex $k \in A$ and $j \in V(G) \setminus \{k\}$.
\end{enumerate}
If $B = \{a,c\}$, then
\begin{equation}
\label{eqn:colon_minl_prime_1}
P^{[2]}:J_{G} = P^{[2]} + (f_{a,c} \, \omega_{A}).
\end{equation}
If $B = \{a,c,d\}$, then
\begin{equation}
\label{eqn:colon_minl_prime_2}
P^{[2]}:J_{G} = P^{[2]} + (f_{a,c}f_{a,d} \, \omega_{A}, f_{a,c}f_{c,d} \, \omega_{A}, f_{a,d}f_{c,d} \, \omega_{A}).
\end{equation}
\ethm 

\bproof
We will prove the case that $B = \{a,c,d\}$ since the proof of the case where $B = \{a,c\}$ is similar. Consider the following subgraphs of $G$: the graph $H_{1}$ obtained as the induced subgraph of $G$ on the vertices $\{a,c,d\}$; and $H_{2}$, the graph obtained from $G$ after deleting the edges $E(H_{1})$. Since the edges of $H_{1}$ and $H_{2}$ cover the edges of $G$, it follows from elementary properties of colon ideals that
\begin{align*}
    P^{[2]}:J_{G} = \left( P^{[2]}:J_{H_{1}} \right) \cap \left( P^{[2]}:J_{H_{2}} \right).
\end{align*}
Lemmas \ref{lem:reduction_of_colon_computation} and \ref{lem:colon_three_binomials} imply that 
\begin{align*}
    P^{[2]}:J_{H_{1}} = P^{[2]} + (f_{a,c}f_{a,d}, f_{a,c}f_{c,d}, f_{a,d}f_{c,d}).
\end{align*}
Lemma \ref{lem:colon_computation_T_empty} implies that
\begin{align*}
    P^{[2]}:J_{H_{2}} = P^{[2]} + (\omega_{A}).
\end{align*}
The claim now follows from Lemma \ref{lem:intersection_ideals}.
\eproof

\section{Cycling and Preliminary Computations}
\label{sec:cycling}

In this section, we establish a series of technical lemmas that will prove useful in computing $J_{G}^{[2]}:J_{G}$ in Section \ref{sec:co_regular_not_f_pure}, when $G$ belongs to one of the co-regular families of graphs.

We will utilize the following setup for the remainder of this section.
\begin{setup}
Given $n$ a positive integer, we set $R = \Z/2\Z[x_{i},y_{i} \mid i \in [n]]$. We denote by $m$ the following monomial
\begin{align*}
    m := \prod_{i=1}^{n} x_{i}y_{i}.
\end{align*}
\end{setup}

The following ``cycling'' lemma will be crucial for us.

\blem[Cycling]
\label{lem:cycling}
Let $n$ be a positive integer. Let $a$ and $c$ be distinct integers in $[n]$. Put $B := \{a,c\}$, and choose $A \subset [n] \setminus \{a,c\}$. Let $r \in P(A,B)^{[2]} + (f_{a,c} \, \omega_{A})$. Let $p$ and $q$ be relatively prime squarefree monomials such that $p$ and $q$ are not divisible by $x_{j}$ or by $y_{j}$ for all $j \in A \cup \{a,c\}$. Then,
\begin{align*}
\frac{x_{a}}{y_{a}} \cdot \frac{p}{q} \cdot m \in \Supp(r) \quad (\text{respectively, } \frac{y_{a}}{x_{a}} \cdot \frac{p}{q} \cdot m \in \Supp(r)),
\end{align*}
if and only if 
\begin{align*}
\frac{x_{c}}{y_{c}} \cdot \frac{p}{q} \cdot m \in \Supp(r) \quad (\text{respectively, } \frac{y_{c}}{x_{c}} \cdot \frac{p}{q} \cdot m \in \Supp(r)).
\end{align*}
\elem 

\bproof
Since $r \in P(A,B)^{[2]} + (f_{a,c} \, \omega_{A})$, we may express $r$ as:
\begin{align}
    \label{eqn:f_in_P_i_i+2}
    r = \sum_{j \in A} (\alpha_{j} \cdot x_{j}^{2} + \beta_{j} \cdot y_{j}^{2}) + \gamma \cdot f_{a,c}^{2} + \delta \cdot f_{a,c} \, \omega_{A}
\end{align}
for some $\alpha_{j},\beta_{j},\gamma,\delta \in R$ and all $j \in A$. The monomial $w := \frac{x_{a}}{y_{a}} \cdot \frac{p}{q} \cdot m$ is not divisible by $x_{j}^{2}$ or by $y_{j}^{2}$ for all $j \in A \cup \{c\}$, since $p$ is not divisible by these variables. It follows that 
\begin{align*}
    w := \frac{x_{a}}{y_{a}} \cdot \frac{p}{q} \cdot m \in \Supp(\delta \cdot f_{a,c} \, \omega_{A}).
\end{align*}
Since $w$  is not divisible by $y_{a}$, it follows that there exists $\delta' \in \Supp(\delta)$ such that 
\begin{align*}
    \frac{x_{a}}{y_{a}} \cdot \frac{p}{q} \cdot m = \delta' \cdot  x_{a}y_{c} \, \omega_{A}.
\end{align*}
Consequently, $\delta' = \frac{1}{y_{a}y_{c}} \cdot \frac{p}{q} \cdot \frac{m}{\omega_{A}}$. The assumption that $q$ is not divisible by $x_{j}$ or by $y_{j}$ for all $j \in A \cup \{a,c\}$ implies that $\delta'$ is a bona fide monomial. We compute that
\begin{align}
    \label{eqn:new_monomial}
    \delta' \cdot f_{a,c} \, \omega_{A}
    = \frac{x_{a}}{y_{a}} \cdot \frac{p}{q} \cdot m + \frac{x_{c}}{y_{c}} \cdot \frac{p}{q} \cdot m.
\end{align}
The monomial $w' := \frac{x_{c}}{y_{c}} \cdot \frac{p}{q} \cdot m$ appearing in equation \eqref{eqn:new_monomial} is not divisible by $x_{j}^{2}$ or by $y_{j}^{2}$ for $j \in A \cup \{a\}$, since $p$ is not divisible by these variables. Consequently, the computation in Equation \eqref{eqn:new_monomial} is the unique way to obtain $w'$ as a term appearing in the expansion in Equation \eqref{eqn:f_in_P_i_i+2}. Thus, the coefficient of $w$ in $r$ is equal to the coefficient of $w'$ in $r$. Thus, $w \in \Supp(r)$ if and only if $w' \in \Supp(r)$.
\eproof

\blem
\label{lem:Jn_m_in_Supp_f}
Let $n \ge 3$ be an integer. Let $a$, $c$, and $d$ be distinct integers in $[n]$. Set $B := \{a,c,d\}$, and choose $A \subset [n] \setminus \{a,c,d\}$. Set $P := P(A,B)$, and let $r \in P^{[2]} + L_{P}$, where $L_{P} := (f_{a,c}f_{a,d}\, \omega_{A},f_{a,c}f_{c,d}\, \omega_{A},f_{a,d}f_{c,d} \, \omega_{A})$. Define the monomials 
\begin{align*}
    \alpha := \frac{x_{a}y_{c}}{y_{a}x_{c}} \cdot m  \quad \text{ and } \quad
    \beta := \frac{y_{a}x_{c}}{x_{a}y_{c}} \cdot m.
\end{align*}
Then,
\begin{align*}
    \card{\{m,\alpha,\beta\} \cap \Supp(r) } \equiv 0 \mod{2}.
\end{align*}
\elem

\bproof
Since $r \in P^{[2]} + L_{P}$, we have that
\begin{align*}
r = \gamma + \mu \cdot f_{a,c}^{2} + \mu' \cdot f_{a,c} f_{a,d} \, \omega_{A} + \mu'' \cdot f_{a,c}  f_{c,d} \, \omega_{A} + \mu''' \cdot f_{a,d}  f_{c,d} \, \omega_{A},
\end{align*}
where $\gamma \in P(A,\varnothing)^{[2]} + (f_{a,d}^{2},f_{c,d}^{2})$ and $\mu$, $\mu'$, $\mu''$, and $\mu'''$ are polynomials in $R$. We observe that $m \in \Supp(r)$ implies
\begin{equation}
    \label{eqn:Jn_m_in_Supp_f}
    m \in \Supp(\mu' \cdot f_{a,c} f_{a,d} \, \omega_{A} + \mu'' \cdot f_{a,c}  f_{c,d} \, \omega_{A} + \mu''' \cdot f_{a,d}  f_{c,d} \, \omega_{A}).
\end{equation}
After expanding out the binomials in equation \eqref{eqn:Jn_m_in_Supp_f} and comparing terms, we can determine the monomials belonging to $\mu'$, $\mu''$, and $\mu'''$ for which equation \eqref{eqn:Jn_m_in_Supp_f} is satisfied. We find that for $\tilde{\mu} \in \Supp(\mu')$
\begin{align*}
    m \in \Supp(\tilde{\mu} \cdot f_{a,c} f_{a,d} \, \omega_{A})
\end{align*}
 if and only if $\tilde{\mu} \in \{\mu_{2},\mu_{3}\}$, where 
\begin{align*}
    \mu_{2} := \frac{1}{x_{a}y_{a}y_{c}x_{d}} \cdot \frac{m}{\omega_{A}} \quad \text{ and } \quad \mu_{3} := \frac{1}{x_{a}y_{a}x_{c}y_{d}}\cdot \frac{m}{\omega_{A}}.
\end{align*}
Similarly, for $\tilde{\mu} \in \Supp(\mu'')$, we have that
\begin{align*}
    m \in \Supp(\tilde{\mu} \cdot f_{a,c}\cdot f_{c,d} \cdot \omega_{A})
\end{align*}
if and only if $\tilde{\mu} \in \{\mu_{4},\mu_{5}\}$ where 
\begin{align*}
    \mu_{4} := \frac{1}{x_{c}y_{c}y_{a}x_{d}}\cdot \frac{m}{\omega_{A}} \quad \text{ and } \quad \mu_{5} := \frac{1}{x_{c}y_{c}x_{a}y_{d}}\cdot \frac{m}{ \omega_{A}}.
\end{align*}
Finally, for $\tilde{\mu} \in \Supp(\mu''')$, we have that
\begin{align*}
    m \in \Supp(\tilde{\mu} \cdot f_{a,d}\cdot f_{c,d} \cdot \omega_{A})
\end{align*}
if and only if $\tilde{\mu} \in \{\mu_{6},\mu_{7}\}$, where
\begin{align*}
    \mu_{6} := \frac{1}{x_{d}y_{d}y_{a}x_{c}}\cdot \frac{m}{\omega_{A}} \quad \text{ and } \quad \mu_{7} := \frac{1}{x_{d}y_{d}x_{a}y_{c}}\cdot \frac{m}{\omega_{A}}.
\end{align*}
Next, we compute that 
\begin{equation}
\label{eqn:exp_prod_one}
\mu_{2} \cdot f_{a,c}f_{a,d} \, \omega_{A} = \left( 
1 + \frac{x_{a}y_{d}}{y_{a}x_{d}} + \frac{x_{c}y_{d}}{y_{c}x_{d}} + \frac{x_{c}y_{a}}{y_{c}x_{a}},
\right) \cdot m
\end{equation}
and that
\begin{equation}
\label{eqn:exp_prod_two}
\mu_{3} \cdot f_{a,c}f_{a,d} \, \omega_{A} = \left(
1 + \frac{x_{a}y_{c}}{y_{a}x_{c}} + \frac{x_{d}y_{c}}{y_{d}x_{c}} + \frac{x_{d}y_{a}}{y_{d}x_{a}}
\right) \cdot m.
\end{equation}
The polynomials $\mu_{i} \cdot f_{a,c}f_{c,d} \, \omega_{A}$ and $\mu_{j} \cdot f_{a,d}f_{c,d} \, \omega_{A}$ can be computed for $i = 4,5$ and $j = 6,7$ by an appropriate change of variables applied to equations \eqref{eqn:exp_prod_one} and \eqref{eqn:exp_prod_two}.  

A straightforward computation shows that if 
\begin{align*}
    \frac{x_{a}y_{c}}{y_{a}x_{c}} \cdot m \in \Supp(\tilde{\mu}\cdot f_{i,j}f_{i,k} \, \omega_{A})
\end{align*}
for $\tilde{\mu}$ a monomial and $i,j,k \in \{a,c,d\}$ distinct, then $\tilde{\mu} \in \{\mu_{3},\mu_{4},\mu_{6}\}$. Likewise, if 
\begin{align*}
    \frac{y_{a}x_{c}}{x_{a}y_{c}} \cdot m \in \Supp(\tilde{\mu} \cdot f_{i,j}f_{i,k} \, \omega_{A})
\end{align*}
for $\tilde{\mu}$ a monomial and $i,j,k \in \{a,c,d\}$ distinct, then $\tilde{\mu} \in \{\mu_{2},\mu_{5},\mu_{7}\}$. 

For $\nu = \alpha$ or $\nu = \beta$, we have that $\nu \in \Supp(r)$ if and only if
\begin{align*}
    \nu \in \Supp(\mu \cdot f_{a,c}^{2} + \mu' \cdot f_{a,c} f_{a,d} \, \omega_{A} + \mu'' \cdot f_{a,c}  f_{c,d} \, \omega_{A} + \mu''' \cdot f_{a,d}  f_{c,d} \, \omega_{A}).
\end{align*}
We have characterized in the preceding paragraph necessary and sufficient conditions on $\mu'$, $\mu''$, and $\mu'''$ for which 
\begin{align*}
    \nu \in \Supp(\mu' \cdot f_{a,c} f_{a,d} \, \omega_{A} + \mu'' \cdot f_{a,c}  f_{c,d} \, \omega_{A} + \mu''' \cdot f_{a,d}  f_{c,d} \, \omega_{A}).
\end{align*}
A similar computation shows that
\begin{align*}
    \nu \in \Supp(\mu\cdot f_{a,c}^{2})
\end{align*}
if and only if
\begin{align*}
    \mu_{1} := \frac{1}{x_{a}y_{a}x_{c}y_{c}}\cdot m \in \Supp(\mu).
\end{align*}

Define the sets 
\begin{align*}
    U_{m} &:= \{i \mid \mu_{i} \in \Supp(\mu'), i = 2,3 \} \cup \{i \mid \mu_{i} \in \Supp(\mu''), i = 4,5 \} \\
    &\phantom{:=} \cup \{i \mid \mu_{i} \in \Supp(\mu'''), i = 6,7 \} \\
    U_{\alpha} &:= (\{3,4,6\} \cap U_{m}) \cup \{i \mid \mu_{i} \in \Supp(\mu) \text{ and } i = 1 \} \\
    U_{\beta} &:= (\{2,5,7\} \cap U_{m}) \cup \{i \mid \mu_{i} \in \Supp(\mu) \text{ and } i = 1 \}.
\end{align*}
It follows from the above observations that the coefficient on $m$, $\alpha$, and $\beta$ in $r$ is $\# U_{m}$, $\# U_{\alpha}$, and $\# U_{\beta}$, respectively. Since $U_{m} = (U_{\alpha} \setminus \{1\}) \sqcup (U_{\beta} \setminus \{1\})$ and $\kk = \Z/2\Z$, it follows that $\#U_{m} + \#U_{\alpha} + \# U_{\beta} \equiv 0 \mod{2}$, which completes the proof.
\eproof

\blem
\label{lem:cycling_via_Q}
Let $n$ be a positive integer, and let $a$, $c$, and $d$ be distinct integers belonging to $[n]$. Let $A \subset [n] \setminus \{a,c,d\}$, and $B := \{a,c,d\}$. Put $P := P(A,B)$. Let $r \in P^{[2]} + L_{P}$. Let $p$ and $q$ be relatively prime squarefree monomials such that $p$ and $q$ are not divisible by $x_{j}$ or by $y_{j}$ for all $j \in A \cup \{a,c,d\}$. If for some $i \in \{a,c,d\}$ it holds that 
\begin{equation}
    \label{eqn:cycling_via_Q_1}
    \frac{x_{i}}{y_{i}} \cdot \frac{p}{q} \cdot m \in \Supp(r) \quad (\text{respectively, }  \frac{y_{i}}{x_{i}} \cdot \frac{p}{q} \cdot m \in \Supp(r)),
\end{equation}
then the same holds for all $i \in \{a,c,d\}$.
\elem

\bproof
For $i \in \{a,c,d\}$, we denote by $\alpha_{i}$ the monomial 
\begin{align*}
    \frac{x_{i}}{y_{i}} \cdot \frac{p}{q} \cdot m.
\end{align*}
After relabeling the variables as necessary, we may suppose without loss of generality that
\begin{align*}
    \frac{x_{a}}{y_{a}} \cdot \frac{p}{q} \cdot m \in \Supp(r).
\end{align*}
It follows that
\begin{align*}
r = \alpha + \mu \cdot f_{a,c}f_{a,d} \, \omega_{A} + \mu' \cdot f_{a,c}f_{c,d} \, \omega_{A} + \mu'' \cdot f_{a,d}f_{c,d} \, \omega_{A},
\end{align*}
where $\alpha \in P^{[2]}$ and $\mu$, $\mu'$, and $\mu''$ are polynomials in $R$. The constraints on $p$ imply that 
\begin{align*}
\frac{x_{a}}{y_{a}} \cdot \frac{p}{q} \cdot m \in \Supp( \mu \cdot f_{a,c}f_{a,d} \, \omega_{A} + \mu' \cdot f_{a,c}f_{c,d} \, \omega_{A} + \mu'' \cdot f_{a,d}f_{c,d} \, \omega_{A} ).
\end{align*}
By expanding the binomials $f_{a,c}f_{a,d}$ and comparing terms, we see that for $\tilde{\mu} \in \Supp(\mu)$ that
\begin{align*}
\frac{x_{a}}{y_{a}} \cdot \frac{p}{q} \cdot m \in \Supp( \tilde{\mu} \cdot f_{a,c}f_{a,d} \, \omega_{A} )
\end{align*}
if and only if 
\begin{align*}
    \tilde{\mu} = \mu_{1} := \frac{1}{x_{a}y_{a}y_{c}y_{d}} \cdot \frac{p}{q} \cdot \frac{m}{\omega_{A}}.
\end{align*}
Similarly, for $\tilde{\mu} \in \Supp(\mu')$ we have that
\begin{align*}
\frac{x_{a}}{y_{a}} \cdot \frac{p}{q} \cdot m \in \Supp( \tilde{\mu} \cdot f_{a,c}f_{c,d} \, \omega_{A} )
\end{align*}
if and only if 
\begin{align*}
    \tilde{\mu} = \mu_{2} := \frac{1}{y_{a}x_{c}y_{c}y_{d}} \cdot \frac{p}{q} \cdot \frac{m}{\omega_{A}}.
\end{align*}
Finally, for $\tilde{\mu} \in \Supp(\mu'')$ we have that
\begin{align*}
\frac{x_{a}}{y_{a}} \cdot \frac{p}{q} \cdot m \in \Supp( \tilde{\mu} \cdot f_{a,d}f_{c,d} \, \omega_{A} )
\end{align*}
if and only if 
\begin{align*}
    \tilde{\mu} = \mu_{3} := \frac{1}{y_{a}y_{c}x_{d}y_{d}} \cdot \frac{p}{q} \cdot \frac{m}{\omega_{A}}.
\end{align*}
We compute that
\begin{equation}
    \label{eqn:cycling_via_Q_2}
    \mu_{1}\cdot f_{a,c}f_{a,d} \, \omega_{A} = 
    \left( \frac{x_{a}}{y_{a}} + \frac{x_{c}}{y_{c}} + \frac{x_{d}}{y_{d}} + \frac{x_{c}x_{d}y_{a}}{y_{c}y_{d}x_{a}} \right) \cdot \frac{p}{q} \cdot m.
\end{equation}
The change of variables induced by swapping $a$ and $c$ applied to equation \eqref{eqn:cycling_via_Q_2} computes $\mu_{2} \cdot f_{a,c}f_{c,d}\, \omega_{A}$. Likewise, the change of variables induced by swapping $a$ and $d$ applied to equation \eqref{eqn:cycling_via_Q_2} computes $\mu_{3} \cdot f_{a,d}f_{c,d}\, \omega_{A}$. It now follows that $\alpha_{i}$ belongs to the support of $\mu_{2} \cdot f_{a,c}f_{a,d}\, \omega_{A}$ and $\mu_{3} \cdot f_{a,d}f_{c,d}\, \omega_{A}$ for all $i \in \{a,c,d\}$. 

A similar argument shows that for $i \in \{c,d\}$ that
\begin{align*}
    \alpha_{i} \in \Supp(\tilde{\mu} \cdot f_{a,c} f_{a,d} \, \omega_{A}) &\iff \tilde{\mu} = \mu_{1} \\
    \alpha_{i} \in \Supp(\tilde{\mu} \cdot f_{a,c} f_{c,d} \, \omega_{A}) &\iff \tilde{\mu} = \mu_{2} \\
    \alpha_{i} \in \Supp(\tilde{\mu} \cdot f_{a,d} f_{c,d} \, \omega_{A}) &\iff \tilde{\mu} = \mu_{3}.
\end{align*}
It follows that the coefficient on $\alpha_{i}$ in $r$ is the same for all $i \in \{a,c,d\}$, which completes the proof.
\eproof

\blem 
\label{lem:forbiddn_support_of_Q}
Let $n$ be a positive integer, and $a$, $c$, and $d$ be distinct positive integers belonging to $[n]$. Let $A \subset [n] \setminus \{a,c,d\}$, and $B := \{a,c,d\}$. Put $P := P(A,B)$. Let $r \in P^{[2]} + L_{P}$. Let $p$ and $q$ be relatively prime squarefree monomials. Suppose that 
\begin{enumerate}
    \item $p$ is not divisible by $x_{j}$ or by $y_{j}$ for $j \in A \cup \{a,c,d\}$, and
    \item $q$ is not divisible by $x_{j}$ or by $y_{j}$ for $j \in A \cup \{a\}$ or by $y_{c}$.
\end{enumerate}
Then, 
\begin{align*}
     \frac{x_{a}}{y_{c}y_{a}} \cdot \frac{p}{q} \cdot m \notin \Supp(r).
\end{align*}
Analogously, suppose that
\begin{enumerate}
    \item $p$ is not divisible by $x_{j}$ or by $y_{j}$ for $j \in A \cup \{a,c,d\}$, and
    \item $q$ is not divisible by $x_{j}$ or by $y_{j}$ for $j \in A \cup \{a\}$ or by $x_{c}$.
\end{enumerate}
Then, 
\begin{align*}
     \frac{y_{a}}{x_{c}x_{a}} \cdot \frac{p}{q} \cdot m \notin \Supp(r).
\end{align*}
\elem 

\bproof
We denote by $r'$ the monomial 
\begin{align*}
    \frac{x_{a}}{y_{c}y_{a}} \cdot \frac{p}{q} \cdot m.
\end{align*}
Since $r \in P^{[2]} + L_{P}$, we have that
\begin{align*}
    r = \alpha + \beta,
\end{align*}
where $\alpha \in P^{[2]}$ and $\beta \in L_{P}$. The constraints on $p$ imply that $r' \notin \Supp(\alpha)$. 

Next, we observe that every monomial of $f_{a,c}$ is divisible by $y_{a}$ or by $y_{c}$. The constraints on $p$ imply that $r'$ is not divisible by $y_{a}$ nor by $y_{c}$. Hence, it follows that $r' \notin \Supp(f_{a,c}f_{a,d})$ and that $r' \notin \Supp(f_{a,c}f_{c,d})$.

Lastly, we observe that every monomial of $f_{a,d}f_{c,d}$ is divisible by $y_{a}$, by $y_{c}$, or by $x_{a}x_{c}y_{d}^{2}$. Since $r'$ is not divisible by $y_{a}$ nor by $y_{c}$ nor by $y_{d}^{2}$, it follows that $r' \notin \Supp(f_{a,d}f_{c,d})$. 

This proves that $r' \notin \Supp(\beta)$. Hence, in particular, $r' \notin \Supp(r)$.
\eproof

\blem
\label{lem:elements_in_supp_of_Q}
Let $n$ be a positive integer, and let $a$, $c$, and $d$ be distinct integers belonging to $[n]$. Let $A \subset [n] \setminus \{a,c,d\}$, and $B := \{a,c,d\}$. Put $P := P(A,B)$. Let $r \in P^{[2]} + L_{P}$. Let $p$ and $q$ be relatively prime squarefree monomials such that $p$ and $q$ are not divisible by $x_{j}$ or by $y_{j}$ for all $j \in A \cup \{a,c,d\}$. If one element from the set
\begin{align*}
     T_{1} := \left\{ \frac{x_{c}}{x_{a}y_{c}} \cdot \frac{p}{q} \cdot m, \frac{1}{y_{a}} \cdot \frac{p}{q} \cdot m, \frac{x_{d}}{x_{a}y_{d}} \cdot \frac{p}{q} \cdot m \right\},
\end{align*}
respectively from the set 
\begin{align*}
     T_{2} := \left\{ \frac{y_{c}}{y_{a}x_{c}} \cdot \frac{p}{q} \cdot m, \frac{1}{x_{a}} \cdot \frac{p}{q} \cdot m, \frac{y_{d}}{y_{a}x_{d}} \cdot \frac{p}{q} \cdot m \right\},
\end{align*}
belongs to the support of $r$, then every element from the set $T_{1}$ (respectively from the set $T_{2}$) belongs to the support of $r$.
\elem 

\bproof
We will establish the statement for $T_{1}$ as the statement for $T_{2}$ is proven analogously. First, we observe that for all $r' \in T_{1}$ that $r'$ is not divisible by $x_{j}^{2}$ or $y_{j}^{2}$ for all $j \in A$, and $r'$ is not divisible by $x_{i}^{2}y_{j}^{2}$ for all $i,j \in \{a,c,d\}$ distinct. Consequently,
\begin{align*}
    r' \in \Supp(\mu \cdot f_{a,c}f_{a,d} \, \omega_{A} + \mu' \cdot f_{a,c}f_{c,d} \, \omega_{A} + \mu'' \cdot f_{a,d}f_{c,d} \, \omega_{A}).
\end{align*}
Second, we observe that for $r' \in T_{1}$, $r' \notin \Supp(\mu \cdot f_{a,c}f_{a,d} \, \omega_{A})$, because every monomial of $f_{a,c}f_{a,d}$ is divisible by $x_{a}^{2}$, $x_{a}y_{a}$, or $y_{a}^{2}$, whereas $r'$ is not divisible by these monomials. Consequently, 
\begin{align*}
    r' \in \Supp(\mu' \cdot f_{a,c}f_{c,d} \, \omega_{A} + \mu'' \cdot f_{a,d}f_{c,d} \, \omega_{A}).
\end{align*}
Next, we observe that if 
\begin{align*}
    r' = \frac{x_{c}}{x_{a}y_{c}} \cdot \frac{p}{q} \cdot m,
\end{align*}
then
\begin{align*}
    r' \in \Supp(\tilde{\mu}  \cdot f_{a,c}f_{c,d}\, \omega_{A})
\end{align*}
for $\tilde{\mu} \in \Supp(\mu')$ if and only if 
\begin{align*}
    \frac{x_{c}}{x_{a}y_{c}} \cdot \frac{p}{q} \cdot m = \mu' \cdot x_{c}y_{a}x_{c}y_{d} \, \omega_{A}.
\end{align*}
Equivalently, 
\begin{align*}
    \mu' = \mu_{1} := \frac{1}{x_{a}y_{a}x_{c}y_{c}y_{d}} \cdot \frac{p}{q} \cdot \frac{m}{\omega_{A}}.
\end{align*}
It can be verified that for every $r' \in T_{1}$,
\begin{align*}
    r' \in \Supp(\tilde{\mu} \cdot f_{a,c}f_{c,d}\, \omega_{A})
\end{align*}
for $\tilde{\mu} \in \Supp(\mu')$ if and only if $\tilde{\mu} = \mu_{1}$. Similarly, it can be verified that
\begin{align*}
    r' \in \Supp(\tilde{\mu}\cdot  f_{a,d}f_{c,d}\, \omega_{A})
\end{align*}
for $\tilde{\mu} \in \Supp(\mu'')$ if and only if
\begin{align*}
    \tilde{\mu} = \mu_{2} := \frac{1}{x_{a}y_{a}y_{c}x_{d}y_{d}} \cdot \frac{p}{q} \cdot \frac{m}{\omega_{A}}.
\end{align*}
We compute that 
\begin{equation}
    \label{eqn:elements_in_supp_of_Q_1}
    \mu_{1} \cdot f_{a,c}f_{c,d} \, \omega_{A} = \left( \frac{1}{y_{a}} + \frac{x_{d}y_{c}}{y_{a}y_{d}x_{c}} + \frac{x_{c}}{x_{a}y_{c}} + \frac{x_{d}}{x_{a}y_{d}} \right) \cdot \frac{p}{q} \cdot m.
\end{equation}
Finally, we compute that
\begin{equation}
    \label{eqn:elements_in_supp_of_Q_2}
    \mu_{2} \cdot f_{a,d}f_{c,d} \,\omega_{A} = \left( \frac{1}{y_{a}} + \frac{x_{c}y_{d}}{y_{a}y_{c}x_{d}} + \frac{x_{c}}{x_{a}y_{c}} + \frac{x_{d}}{x_{a}y_{d}} \right) \cdot \frac{p}{q} \cdot m.
\end{equation}
Since the coefficient field is $\Z/2\Z$, it follows for parity reasons, if $r' \in \Supp(r)$ for some $r' \in T_{1}$, then either $\mu_{1} \in \Supp(\mu')$ or $\mu_{2} \in \Supp(\mu'')$. In which case, equations \eqref{eqn:elements_in_supp_of_Q_1} and \eqref{eqn:elements_in_supp_of_Q_2} imply the claim.
\eproof

\blem
\label{lem:r_in_P_div_by_omega_S}
Let $n$ be a positive integer. Let $a$ and $c$ be distinct integers in $[n]$. Put $B := \{a,c\}$, and choose $A \subset [n] \setminus \{a,c\}$. Let $r \in P(A,B)^{[2]} + (f_{a,c} \, \omega_{A})$. Suppose that
\begin{enumerate}
    \item $r' \in \Supp(r)$,
    \item $r'$ is not divisible by $x_{j}^{2}$ or by $y_{j}^{2}$ for all $j \in A$,
    \item $r'$ is not divisible by $x_{a}^{2}y_{c}^{2}$ or by $x_{c}^{2}y_{a}^{2}$.
\end{enumerate}
Then,
\begin{enumerate}
    \item $x_{a}y_{c} \mid r'$ or $x_{c}y_{a} \mid r'$, and
    \item $\omega_{A} \mid r'$.
\end{enumerate}
\elem 

\bproof
Expanding $r$ in terms of the generators of $P_{a,c}^{[2]} + (f_{a,c}\, \omega_{A})$, we have that
\begin{align*}
    r = \sum_{j\in A} (\alpha_{j}\cdot x_{j}^{2} + \beta_{j}\cdot y_{j}^{2}) + \gamma \cdot f_{a,c}^{2} + \mu\cdot  f_{a,c} \, \omega_{A}
\end{align*}
for polynomials $\alpha_{j},\beta_{j},\gamma,\mu$. 
The constraints on $r'$ imply that $r' \in \Supp(\mu \cdot f_{a,c} \, \omega_{A})$.
\eproof

\blem
\label{lem:supp_Q_div_bin_supp}
Let $n$ be a positive integer, and let $a$, $c$, and $d$ be distinct integers belonging to $[n]$. Let $A \subset [n] \setminus \{a,c,d\}$, and let $B := \{a,c,d\}$. Put $P := P(A,B)$. Let $r \in P^{[2]} + L_{P}$.  Suppose that 
\begin{enumerate}
    \item $r' \in \Supp(r)$,
    \item $r'$ is not divisible by $x_{j}^{2}$ or $y_{j}^{2}$ for $j \in A$,
    \item $r'$ is not divisible by $x_{k}^{2}y_{l}^{2}$ for any $k,l \in B$ distinct.
\end{enumerate}
Then,
\begin{enumerate}
    \item $x_{k} \mid r'$ or $y_{k} \mid r'$ for each $k \in B$, and
    \item $\omega_{A} \mid r'$, 
    \item If in addition, $y_{d}^{2} \nmid r'$ (respectively $x_{d}^{2} \nmid r'$), then $y_{a}$ or $y_{c}$ (respectively $x_{a}$ or $x_{c}$) divides $r'$.
\end{enumerate}
\elem 

\bproof
It follows that 
\begin{equation}
    \label{eqn:supp_Q_div_bin_supp_1}
    r' \in \Supp(\mu \cdot f_{a,c}f_{a,d} \,\omega_{A} + \mu' \cdot f_{a,c}f_{c,d} \, \omega_{A} + \mu'' \cdot f_{a,d}f_{c,d} \, \omega_{A}).
\end{equation}
We observe that every monomial belonging to the support of $f_{a,c}f_{a,d}$ is divisible by $x_{k}$ or $y_{k}$ for each $k \in B$. Similarly, this holds for $f_{a,c}f_{c,d}$ and $f_{a,d}f_{c,d}$. This proves the first claim. The second claim follows immediately from equation \eqref{eqn:supp_Q_div_bin_supp_1}.

For the third claim, let us suppose that $y_{d}^{2} \nmid r'$. The case in which $x_{d}^{2} \nmid r'$ is proved similarly. We have that equation \eqref{eqn:supp_Q_div_bin_supp_1} holds. We observe that every monomial belonging to the support of $f_{a,c}$ is divisible by $y_{a}$ or $y_{c}$, and that every monomial belonging to the support of $f_{a,d}f_{c,d}$ is divisible by $y_{a}$, $y_{c}$, or $y_{d}^{2}$. This proves the third claim.
\eproof

\section{Reduction Step in Applying Fedder's Criterion}
\label{sec:reduction_step_fedder_criterion}

In this section, we establish a criterion for determining when a binomial edge ideal defines an F-pure quotient ring, Proposition \ref{prop:reduction_step_fedder_computation}. The remaining results in this section are used solely to establish this proposition.

We begin by establishing the setup for this section.

\begin{setup}
\label{setup:co_regular_not_f_pure}
Let $1 \leq m < n$ be positive integers. Let $R$ and $S$ denote the polynomial rings $\k[x_{1},\ldots,x_{m},y_{1},\ldots,y_{m}]$ and $\k[x_{1},\ldots,x_{n},y_{1},\ldots,y_{n}]$, respectively. Let $\m$ denote the ideal $( \{x_{i},y_{i} \mid 1 \leq i \leq m \}) \subset R$, and let $\n$ denote the ideal $( \{x_{i},y_{i} \mid m+1 \leq i \leq n \}) \subset S$. Let $X$ denote the $2 \times n$ matrix with $X_{1,i} = x_{i}$ and $X_{2,i} = y_{i}$ for $1 \leq i \leq n$. Let $Y$ denote the submatrix of $X$ obtained by restricting to the first $m$ columns. Let $G$ be a graph on $n$ vertices. Let $H$ denote the induced subgraph of $G$ on the vertices $\{1,\ldots,m\}$.
\end{setup}

\blem
\label{lem:distributivity_intersection_w_sum}
We utilize the setup established in \ref{setup:co_regular_not_f_pure}. Let $I$ and $K$ be ideals of $R$. Then,
\begin{align*}
    (IS + \n^{[2]}) \cap (KS + \n^{[2]}) = (I \cap K) S + \n^{[2]}.
\end{align*}
\elem

\bproof
The containment $\supset$ always holds.

We prove that the relation $\subset$ holds in this case. Pick $r \in (IS + \n^{[2]})$. Write $r = a + b$, where $a \in IS$ and $b \in \n^{[2]}$. Express $a$ as 
\begin{align*}
    \sum_{i=1}^{k} p_{i} g_{i},
\end{align*}
where $p_{i} \in S$ and $g_{i} \in I$ for $1 \leq i \leq k$. Since $I \subset R$, no monomial belonging to $\Supp(g_{i})$ belongs to $\n^{[2]}$ for $1 \leq i \leq k$. For $c \in \Supp(p_{i})$ and $c \in \n^{[2]}$, replace $a$ by $a - cg_{i}$ and $b$ by $b + cg_{i}$. After repeating this process we may assume that no monomial belonging to $\Supp(a)$ belongs to $\n^{[2]}$. Since $r \in KS + \n^{[2]}$ and $b \in \n^{[2]}$, it follows that $a = r - b \in KS+\n^{[2]}$. Hence, 
\begin{align*}
    a = ( \sum_{i=1}^{\ell} q_{i} h_{i} ) + v,
\end{align*}
where $q_{i} \in S$, $h_{i} \in K$ for $1 \leq i \leq \ell$, and $v \in \n^{[2]}$. Since no non-zero term of $a$ nor of $h_{i}$ belongs to $\n^{[2]}$ for any $1 \leq i \leq \ell$, it follows that a non-zero term of $q_{i}$ belonging to $\n^{[2]}$ will cancel with a non-zero term of $v$. After performing these cancellations, it must be the case that $v = 0$ since the support of $a$ is disjoint from $\n^{[2]}$. Thus, $a \in KS$, and the claim follows.
\eproof

\blem 
\label{lem:reduction_Q_n_to_Q_5}
We utilize the setup established in \ref{setup:co_regular_not_f_pure}. Let $A \subset [n] \setminus [m]$, and let $B \subset [n] \setminus A$. Let $Q := P(A,B)$. Put $B' := B \cap [m]$, and $Q' := P(\varnothing,B')$ Then
\begin{align*}
    \left( Q^{[2]}:_{S} J_{G}  \right) + \n^{[2]} \subset \left( Q'^{[2]}:_{R} J_{H}  \right)S + \n^{[2]}.
\end{align*}
\elem 

\bproof
Let $r \in Q^{[2]}:_{S} J_{G} $, and $f \in J_{H}$ a minimal generator. The relation $r \cdot f \in Q^{[2]}$ holds mod $\n^{[2]}$, and we obtain that 
\begin{align*}
    r \cdot f \in Q'^{[2]}S + \n^{[2]}.
\end{align*}
Hence, we have that
\begin{align*}
    r \in (Q'^{[2]}S + \n^{[2]}) :_{S} J_{H}S.
\end{align*}
Since the variables $x_{m+1},\ldots,x_{n},y_{m+1},\ldots,y_{n}$ do not appear in a minimal generating set of $J_{H}$ nor of $Q'$, it follows that 
\begin{align*}
    (Q'^{[2]} S + \n^{[2]}) :_{S} J_{H}S = \left( Q'^{[2]} S  :_{S} J_{H}S \right) + \n^{[2]}.
\end{align*}
Lastly, since $R \ra S$ is a flat map of rings, it follows that 
\begin{align*}
   \left( Q'^{[2]} S  :_{S} J_{H}S \right) + \n^{[2]} =  \left( Q'^{[2]}  :_{R} J_{H} \right)S + \n^{[2]},
\end{align*}
which completes the proof.
\eproof

\bprop
\label{prop:reduction_step_fedder_computation}
We utilize the setup established in \ref{setup:co_regular_not_f_pure}. Let $A \subset [n] \setminus [m]$, and let $B \subset [n] \setminus A$. We define $Q := P(A,B)$. Put $B' := B \cap [m]$, and $Q' := P(\varnothing,B')$. Suppose that $Q$ is a minimal prime of $J_{G}$, and let $\mc{C}$ be a subset of minimal primes of $J_{G}$ which does not contain $Q$. Let $K$ be an ideal of $R$ such that 
\begin{equation}
    \label{eqn:reduction_step_fedder_computation_assumption_1}
    \bigcap_{P \in \mc{C}} \left( P^{[2]}:_{S} J_{G} \right) \subset K S + \n^{[2]}.
\end{equation}
If 
\begin{equation}
    \label{eqn:reduction_step_fedder_computation_assumption_2}
    K \cap ( Q'^{[2]} :_{R} J_{H}) \subset \m^{[2]},
\end{equation}
then $J_{G}$ is not F-pure.
\eprop 

\bproof
We have that
\begin{align*}
    J_{G}^{[2]} :_{S} J_{G} 
    &\subset \left( \bigcap_{P \in \mc{C}} (P^{[2]} :_{S} J_{G}) \right) \cap \left( Q^{[2]} :_{S} J_{G} \right) & \\
    & \subset \left( KS + \n^{[2]} \right) \cap \left( Q^{[2]} :_{S} J_{G} \right) & & \eqref{eqn:reduction_step_fedder_computation_assumption_1} \\ 
    & \subset \left( KS + \n^{[2]} \right) \cap \left( (Q'^{[2]} :_{R} J_{H})S + \n^{[2]} \right) & & (\ref{lem:reduction_Q_n_to_Q_5}) \\
    &\subset \left( K \cap (Q'^{[2]} :_{R} J_{H}) \right)S + \n^{[2]} & & \eqref{lem:distributivity_intersection_w_sum} \\
    & \subset \m^{[2]}S + \n^{[2]}. & & \eqref{eqn:reduction_step_fedder_computation_assumption_2}
\end{align*}
Fedder's Criterion (Theorem \ref{thm:fedder_criterion}) implies that $J_{G}$ is not F-pure.
\eproof

\section{The ``Co-Regular Families" are Not F-Pure}
\label{sec:co_regular_not_f_pure}

In this section, we utilize the results of Sections \ref{sec:computation_colon_ideals} through \ref{sec:reduction_step_fedder_criterion} to prove that the co-regular families of graphs do not define F-pure ideals. Each co-regular family is treated in its own subsection. We have various technical results: \ref{lem:minimal_primes_anti_hole}, \ref{lem:anti_hole_bad_elts_in_pairs}, \ref{lem:Jn_min_primes} through \ref{cor:restrictions_supp_r}, \ref{lem:jn2_min_primes} through \ref{lem:Jn2_restriction_r'_numerator}, and \ref{lem:Jn2_two_elts_in_supp_r} through \ref{cor:Jn2_supp_r_exclusivity}. These technical preliminaries are used to establish the structure of $J_G^{[2]}:J_G$ in results: \ref{lem:anti_hole_bad_terms_char}, \ref{prop:J_contained_in_Q}, \ref{lem:jn_leftover_elements_not_killed_modulo_ideal}, \ref{prop:Jn_J_contained_in_Q}, \ref{lem:Jn2_supp_r_divisibility}, and \ref{prop:Jn2_J_contained_in_Q}. These structure results together with Proposition \ref{prop:reduction_step_fedder_computation} allows us to prove that co-regular families do not define F-pure ideals in Theorems \ref{thm:Cn_not_F_pure}, \ref{thm:Jn_not_F_pure}, \ref{thm:Jn1_not_F_pure}, and \ref{thm:Jn2_not_F_pure}.

\subsection{The Odd Anti-hole}

We start by establishing common notation that we will use in the remainder of this subsection.

\begin{setup}
\label{setup:anti_hole}
We denote by $\ell$ a positive integer greater than or equal to $2$, and we denote by $n$ the quantity $2\ell +1$. We denote by $G = \overline{C_n}$ the complement of the cycle $C_{n}$. For $1 \leq i \leq n$, we denote by $P_{i,i+2}$ the ideal
\begin{align*}
    P_{i,i+2} := P([n]\setminus\{i,i+1,i+2\},\{i,i+2\}).
\end{align*}
All indices are interpreted modulo n.  
We denote by $J$ the ideal 
\begin{align*}
    J := \bigcap_{i=1}^{n} \left( P_{i,i+2}^{[2]}:J_{G} \right).
\end{align*}
We denote by $m$ the monomial
\begin{align*}
    m &:= \prod_{j=1}^{n} x_{j}y_{j}.
\end{align*}
\end{setup}

\blem
\label{lem:minimal_primes_anti_hole}
With the setup as in \ref{setup:anti_hole}, the following are minimal prime ideals of $J_{G}$: 
\begin{itemize}
\item $P_{i,i+2}$ for $1 \leq i \leq n$, and
\item $P(\varnothing,[n])$.
\end{itemize}
\elem

\bproof
Follows from Proposition \ref{prop:min_primes_binomial_edge_ideal} since the set $[n] \setminus \{i,i+1,i+2\}$ is a cut set of $G$ for every $1 \leq i \leq n$.
\eproof

\noindent In fact, Lemma \ref{lem:minimal_primes_anti_hole} describes all of the minimal primes of $J_{G}$. However, we will not need this result.

\bcor
\label{lem:colon_ideal_of_anti_hole_minl_prime}
With the setup as in \ref{setup:anti_hole}, we have that for $1 \leq i \leq n$ 
\begin{align*}
P_{i,i+2}^{[2]}:J_{G} = P_{i,i+2}^{[2]} + (f_{i,i+2} \, \omega_{A}).
\end{align*}
\ecor 

\bproof
This follows from Theorem \ref{thrm:colon_minl_prime}, where we take $a = i$, $b = i+1$, $c = i+2$, and $d = n+1$ belonging to $[n+1]$, $S := [n+1] \setminus \{a,b,c,d\}$, and $T := \{a,c\}$.
\eproof

\blem
\label{lem:anti_hole_bad_elts_in_pairs}
With the setup as in \ref{setup:anti_hole}, let $r \in J$. Then, 
\begin{enumerate}
    \item $\frac{x_{1}}{y_{1}} \cdot m \in \Supp(r)$ if and only if $\frac{x_{2}}{y_{2}} \cdot m \in \Supp(r)$,
    \item $\frac{y_{1}}{x_{1}} \cdot m \in \Supp(r)$ if and only if $\frac{y_{2}}{x_{2}} \cdot m \in \Supp(r)$,
    \item $\frac{x_{1}y_{2}}{y_{1}x_{2}} \cdot m \in \Supp(r)$ if and only if $\frac{x_{2}y_{1}}{y_{2}x_{1}} \cdot m \in \Supp(r)$,
\end{enumerate}
\elem 

\bproof
We prove (1). Suppose that 
\begin{equation}
    \label{eqn:anti_hole_bad_elts_in_pairs_1}
    \frac{x_{1}}{y_{1}}\cdot m \in \Supp(r).
\end{equation}
Lemma \ref{lem:cycling} applied with $P = P_{1,3}$, $a = 1$, $c = 3$, $p = 1$, and $q = 1$ implies that equation \eqref{eqn:anti_hole_bad_elts_in_pairs_1} is equivalent to  
\begin{align*}
    \frac{x_{3}}{y_{3}}\cdot m \in \Supp(r).
\end{align*}
It follows from Lemmas \ref{lem:cycling} and \ref{lem:minimal_primes_anti_hole} that we may continue in this manner, and thus obtain that  \eqref{eqn:anti_hole_bad_elts_in_pairs_1} is equivalent to 
\begin{align*}
    \frac{x_{2}}{y_{2}}\cdot m \in \Supp(r).
\end{align*}
This proves (1). The proof of (2) is analogous.

We prove (3). Suppose that
\begin{equation}
    \label{eqn:anti_hole_bad_elts_in_pairs_2}
    \frac{x_{1}y_{2}}{y_{1}x_{2}}\cdot m \in \Supp(r).
\end{equation}
Lemma \ref{lem:cycling} applied with $P = P_{1,3}$, $a = 1$, $c = 3$, $p = y_{2}$, and $q = x_{2}$ implies that equation \eqref{eqn:anti_hole_bad_elts_in_pairs_2} is equivalent to  
\begin{equation}
    \label{eqn:anti_hole_bad_elts_in_pairs_3}
    \frac{y_{2}x_{3}}{x_{2}y_{3}}\cdot m \in \Supp(r).
\end{equation}
Now Lemma \ref{lem:cycling} applied with $P = P_{2,4}$, $a = 2$, $c = 4$, $p = x_{3}$, and $q = y_{3}$ implies that equation \eqref{eqn:anti_hole_bad_elts_in_pairs_3} is equivalent to 
\begin{align*}
    \frac{x_{3}y_{4}}{y_{3}x_{4}}\cdot m \in \Supp(r).
\end{align*}
It follows from Lemmas \ref{lem:cycling} and \ref{lem:minimal_primes_anti_hole} that we may continue in this manner, and thus obtain that equation \eqref{eqn:anti_hole_bad_elts_in_pairs_2} is equivalent to 
\begin{align*}
    \frac{x_{2}y_{1}}{y_{2}x_{1}} \cdot m \in \Supp(r).
\end{align*}
This proves (3).
\eproof

\blem
\label{lem:anti_hole_bad_terms_char}
Let the setup be as in \ref{setup:anti_hole}. We define the ideals
\begin{align*}
    \a_{0} &:= (\{x_{j}^{2},y_{j}^{2}\}_{j=6}^{n}) \\
    \a_{1} &:= (\{x_{j}^{2},y_{j}^{2}\}_{j=3}^{5}) + ( \{x_{j}^{3}, y_{j}^{3}\}_{j=1,2} ), \\
    \a_{2} &:= ( \{ x_{j}^{2}y_{j}, y_{j}^{2}x_{j}\}_{j = 1,2} ), \\
    \a_{3} &:= ( x_{1}^{2}x_{2}^{2},y_{1}^{2}y_{2}^{2} ).
\end{align*}
We define the ideal $L := \sum_{i=1}^{3} \a_{i}$. Let $r \in J$, and $r' \in \Supp(r)$ such that $r' \notin L + \a_{0}$. Then $r'$ belongs to the following set:
\begin{align*}
    \A_{1}\cup \A_{2} \cup \A_{3} \cup \A_{4},
\end{align*}
where $\A_{1} := \{h_{1,1} \}$ and $\A_{i} := \{h_{i,1},h_{i,2}\}$ for $2 \leq i \leq 4$, and
\begin{equation*}
\begin{aligned}
    h_{1,1} &:= 1\cdot m \quad \\
    h_{2,1} &:= \frac{x_{1}}{y_{1}} \cdot m
    & h_{2,2} &:= \frac{x_{2}}{y_{2}} \cdot m \\
    h_{3,1} &:= \frac{y_{1}}{x_{1}} \cdot m   
    & h_{3,2} &:= \frac{y_{2}}{x_{2}} \cdot m \\  
    h_{4,1} &:= \frac{x_{1}y_{2}}{y_{1}x_{2}} \cdot m 
    & h_{4,2} &:= \frac{x_{2}y_{1}}{y_{2}x_{1}} \cdot m.
\end{aligned}
\end{equation*}
\elem

\bproof
Since $r \in P_{i,i+2}^{[2]}:J_{G}$ for $i \in \{1,n\}$, Lemma \ref{lem:r_in_P_div_by_omega_S} implies that $r'$ is divisible by $\omega_{S}$ for $S = [n] \setminus \{1,2\}$. 

\ul{Case 1}. Suppose that $r'$ is not divisible by $x_{i}^{2}$ or by $y_{i}^{2}$ for $i = 1,2$. Then $r \in P_{3,5}^{[2]}:J_{G}$, and Lemma \ref{lem:r_in_P_div_by_omega_S} implies that $m \mid r'$, in which case, the constraints on $r'$ arising from $r' \notin L$ imply that $r' = h_{1,1}$.

\ul{Case 2}. Suppose that $x_{1}^{2} \mid r'$. Since $r' \notin \a_{2}$ and $r' \notin \a_{3}$, it follows that $y_{1}$ and $x_{2}^{2}$ do not divide $r'$. Because $r \in P_{n-1,1}^{[2]}:J_{G}$, Lemma \ref{lem:r_in_P_div_by_omega_S} implies that $y_{2}^{2} \mid r'$ or that $x_{2}y_{2} \mid r'$. When $y_{2}^{2} \mid r'$ it follows that $x_{2} \nmid r'$ (otherwise, $r' \in \a_{2}$), and hence $r' = h_{4,1}$. When $x_{2}y_{2} \mid r'$, it follows that $r' = h_{2,1}$.

\ul{Case 3.} The remaining cases where $x_{2}^{2}$, $y_{1}^{2}$, or $y_{2}^{2}$ divides $r'$ are considered analogously to case (2).
\eproof

\bprop
\label{prop:J_contained_in_Q} 
Let the setup be as in \ref{setup:anti_hole}. Let $L$ be the ideal as defined in Lemma \ref{lem:anti_hole_bad_terms_char}. Then 
\begin{align*}
    J + L + \a_{0} \subset (h_{1,1},h_{2,1}+h_{2,2},h_{3,1}+h_{3,2},h_{4,1}+h_{4,2}) + L + \a_{0}.
\end{align*}
\eprop  

\bproof
Let $r \in J$. Let $S := \Supp(r) \setminus (L + \a_{0})$. Define the sets
\begin{align*}
    S_{1} &= \{h_{1,1}\} \\
    S_{2} &= \{h_{2,1},h_{2,2}\} \\
    S_{3} &= \{h_{3,1},h_{3,2}\} \\
    S_{4} &= \{h_{4,1},h_{4,2} \}.    
\end{align*}
Lemmas \ref{lem:anti_hole_bad_elts_in_pairs} and \ref{lem:anti_hole_bad_terms_char} imply that there exists $1 \leq t \leq 4$ and $1 \leq i_{1} < \cdots < i_{t} \leq 4$ such that 
\begin{align*}
    S = S_{i_{1}} \sqcup \cdots \sqcup S_{i_{t}}.
\end{align*}
It follows that
\begin{align*}
    r = \sum_{j = 1}^{t} \sum_{g\in S_{i_{j}}} g \mod{(L+\a_{0})},
\end{align*}
which completes the proof.
\eproof

\bthm
\label{thm:Cn_not_F_pure}
Let $n \geq 5$ be an odd integer, and let $G$ be the graph $\ol{C_{n}}$. Then $J_{G}$ is not F-pure.
\ethm

\bproof
When $n = 5$, we verify with Macaulay2 that $J_{G}$ is not F-pure. Hence, we may suppose that $n \geq 7$ is an odd integer. We utilize the notation for $h_{i,j}$ and $L$ as they are defined in \ref{lem:anti_hole_bad_terms_char}. We will use Proposition \ref{prop:reduction_step_fedder_computation} to prove that $J_{G}$ is not F-pure. In the notation coming from Proposition \ref{prop:reduction_step_fedder_computation}, we have that $A = \varnothing$, $B = [n]$, $Q = P(A,B)$, $m = 5$, $B' = [5]$, $Q' = P(\varnothing,B')$, and $H$ is the graph on $[m]$ having edge set
\begin{align*}
    E(H) := \left\{ \{1,3\},\{1,4\},\{1,5\},\{2,4\},\{2,5\},\{3,5\} \right\}.
\end{align*}
We define the ideal
\begin{align*}
    K := (h_{1,1},h_{2,1}+h_{2,2},h_{3,1}+h_{3,2},h_{4,1}+h_{4,2}) + L.
\end{align*}
We define $\mc{C} := \{P_{i,i+2} \mid 1 \leq i \leq 2n+1\}$. Proposition \ref{prop:J_contained_in_Q} implies that the containment \eqref{eqn:reduction_step_fedder_computation_assumption_1} of Proposition \ref{prop:reduction_step_fedder_computation} is satisfied. We verify using Macaulay2 that equation \eqref{eqn:reduction_step_fedder_computation_assumption_2} of Proposition \ref{prop:reduction_step_fedder_computation} is satisfied. Thus, Proposition \ref{prop:reduction_step_fedder_computation} implies that $J_{G}$ is not F-pure.
\eproof

\subsection{The Graph $\mathbf{co\!-\!XF_1^{2n+3}}$}

We define the graph $\mathrm{co\!-\!XF}_1^{2n+3}$.
\bdefn
For $n \geq 0$, $\mathrm{XF}_1^{2n+3}$ denotes the graph on $2n+7$ vertices having edge set 
\begin{align*}
\{ \{1,i\}_{i = 2}^{2n+5}, \{i,i+1\}_{i=2}^{2n+4}, \{2n+5,2n+6\}, \{2, 2n+7\} \}.
\end{align*}
The graph $\mathrm{co\!-\!XF}_1^{2n+3} := \ol{\mathrm{XF}_1^{2n+3}}$; see Figure~\ref{graph:Jn} for a depiction of $\mathrm{XF}_1^{2n+3}$.
\edefn


\def\edJn{2}
\def\scale{0.75}

\def\ptsJn{0*\edJn/-1*\edJn/1/270,
    .781831*\edJn/-.62349*\edJn/2/0,
    .974928*\edJn/.222521*\edJn/3/0,
    .433884*\edJn/.900969*\edJn/4/0,
   -.974928*\edJn/.222521*\edJn/2n+4/180,
    -.781831*\edJn/-.62349*\edJn/2n+5/180,
    2*.781831*\edJn/2*-.62349*\edJn/2n+7/0,
    2*-.781831*\edJn/2*-.62349*\edJn/2n+6/180}
    
\def\ptsJnUnlabel{-.433884*\edJn/.900969*\edJn/5}

\def\edgesJn{
    1/2,
    1/3,
    1/4,
    1/5,
    1/2n+4,
    1/2n+5,
    2/3,
    3/4,
    2n+4/5,
    2n+4/2n+5,
    2n+5/2n+6,
    2/2n+7}

\begin{figure}[htb]
\begin{center}
\begin{tikzpicture}
	\foreach \x/\y/\z/\w in \ptsJn {
		\draw[fill = black!50] (\scale*\x,\scale*\y) circle [radius = 0.1] node[label = {[label distance = 0.05 cm]\w: $\z$}] (\z) {}; 
	}
\foreach \x/\y/\z in \ptsJnUnlabel {
		\draw[fill = black!50] (\scale*\x,\scale*\y) circle [radius = 0.1] node (\z) {}; 
	}
\foreach \x/\y in \edgesJn { \draw (\x) -- (\y); }  
\draw (4) -- (5) node [pos = 0.5, sloped, above] {{\large $\mathbf{\cdots}$}};
\end{tikzpicture}
\end{center}
\caption{The graph $\mathrm{XF}_1^{2n+3}$, $n\geq 0$} \label{graph:Jn}
\end{figure}

We establish common notation that we will use in the remainder of this subsection.

\begin{setup}
\label{setup:Jn}
Let $n \ge 1$ be an integer.\footnote{We will treat the case of $\mathrm{co\!-\!XF}_{1}^{3}$ separately.} We denote by $G$ the graph $\mathrm{co\!-\!XF}_1^{2n+3}$. For $2 \leq i \leq 2n+3$, we denote by $P_{i,i+2}$ the ideal
\begin{align*}
    P_{i,i+2} := P([2n+7]\setminus\{1,i,i+1,i+2\},\{i,i+2\}).
\end{align*}
We also define the ideals
\begin{align*}
Q_{1} &:= P(\{ j \mid 2 \leq j \leq 2n+3 \text{ or } j = 2n+7 \}, \{1,2n+4,2n+6\}) \\
Q_{2} &:= P( \{j \mid 4 \leq j \leq 2n+6 \}, \{1,3,2n+7\} ) \\
Q_{3} &:= P(\{2n+6,2n+7\}, \{j \mid 2 \leq j \leq 2n+5 \} ).
\end{align*}
We denote by $J$ the ideal 
\begin{align*}
    J := \bigcap_{i=2}^{2n+3} \left( P_{i,i+2}^{[2]}:J_{G} \right) \cap \bigcap_{i=1,2} \left( Q_{i}^{[2]}:J_{G} \right).
\end{align*}
We denote by $m$ the monomial
\begin{align*}
    m &:= \prod_{j=1}^{2n+7} x_{j}y_{j}.
\end{align*}
\end{setup}

\blem
\label{lem:Jn_min_primes}
With the setup as in \ref{setup:Jn}, we have that the following ideals are minimal primes of $J_{G}$:
\begin{enumerate}
\item $P_{i,i+2}$ for $2 \leq i \leq 2n+3$, and
\item $Q_{i}$ for $1 \leq i \leq 3$.
\end{enumerate}
\elem

\bproof
\begin{enumerate}
    \item The proof that $P_{i,i+2}$ is a minimal prime of $J_{G}$ is analogous to the proof in Lemma \ref{lem:minimal_primes_anti_hole}.
    \item We show that $Q_{1}$ is a minimal prime of $J_{G}$. First, we observe that if $S := \{j \mid 2 \leq j \leq 2n+3\} \cup \{2n+7\}$, then $G \setminus S$ consists of two connected components $G_{1}$ and $G_{2}$ where $V(G_{1}) = \{1,2n+4,2n+6\}$ and $E(G_{1}) = \{ \{1,2n+6\}, \{2n+4,2n+6\} \}$, and $G_{2}$ is the graph on the singleton vertex $\{2n+5\}$. It is clear that $S$ is a cut set of $G$, and hence $Q_{1}$ is a minimal prime of $J_{G}$ by Proposition \ref{prop:min_primes_binomial_edge_ideal}. Similarly, it is shown that $Q_{2}$ is a minimal prime of $J_{G}$. 
    \item We show that $Q_{3}$ is a minimal prime of $J_{G}$. If $S = \{2n+6,2n+7\}$, then $G \setminus S$ consists of two connected components. The set $G_{1}$ on the vertex set $1$, and $G_{2}$ on the vertices $\{j \mid 2 \leq j \leq 2n+5\}$. Adding $2n+6$ or $2n+7$ reconnects these connected components, and it follows that $S$ is a cut set of $G$.
\end{enumerate}
\eproof

\blem
\label{lem:colon_minimal_primes_Jn}
With the setup as in \ref{setup:Jn}, we have that
\begin{enumerate}
\item For $2 \leq i \leq 2n+3$ that 
\begin{align*}
    P_{i,i+2}^{[2]} : J_{G} = P_{i,i+2}^{[2]} + (f_{i,i+2} \, \omega_{A})
\end{align*}
\item For $i = 1,2$ we have that 
\begin{align*}
    Q_{i}^{[2]}:J_{G} = Q_{i}^{[2]} + L_{Q_{i}}.
\end{align*}
\end{enumerate}
\elem

\bproof
Follows from Theorem \ref{thrm:colon_minl_prime}. When applying Theorem \ref{thrm:colon_minl_prime} to $Q_{1}$, we let $a = 2n+4$, $b = 2n+5$, $c = 2n+6$, and $d = 1$. When applying Theorem \ref{thrm:colon_minl_prime} to $Q_{2}$, we let $a = 3$, $b = 2$, $c = 2n+7$, and $d = 1$.
\eproof

\bcor
\label{cor:Jn_degree_two_monomial_in_Supp}
With the setup as in \ref{setup:Jn}, let $r \in J$. 
\begin{enumerate}
\item If $m \in \Supp(r)$, then either
\begin{align*}
\frac{x_{1}y_{4}}{y_{1}x_{4}} \cdot m \in \Supp(r) \quad \text{ or }  \quad \frac{x_{1}y_{5}}{y_{1}x_{5}} \cdot m \in \Supp(r).
\end{align*}
\item If $m \notin \Supp(r)$, then 
\begin{align*}
    \frac{x_{1}y_{4}}{y_{1}x_{4}} \cdot m \in \Supp(r) \iff \frac{x_{1}y_{5}}{y_{1}x_{5}} \cdot m \in \Supp(r).
\end{align*}
\end{enumerate}
\ecor 

\bproof 
First we observe that Lemma \ref{lem:cycling} applied to $P_{3,5}$ where $p = x_{1}$ and $q = y_{1}$ implies that
\begin{align*}
    \frac{x_{1}y_{3}}{y_{1}x_{3}} \cdot m \in \Supp(r) \iff \frac{x_{1}y_{5}}{y_{1}x_{5}} \cdot m \in \Supp(r).
\end{align*}
Repeatedly applying Lemma~\ref{lem:cycling} we get the following
\begin{align*}
    \frac{y_{1}x_{3}}{x_{1}y_{3}} \cdot m \in \Supp(r)
    &\iff \frac{y_{1}x_{2n+5}}{x_{1}y_{2n+5}} \cdot m \in \Supp(r) & & (\ref{lem:cycling} \text{ iteratively}) \\
    &\iff \frac{y_{2n+4}x_{2n+5}}{x_{2n+4}y_{2n+5}} \cdot m \in \Supp(r) & & (\ref{lem:cycling_via_Q} \text{ via } Q_{1}) \\
    &\iff \frac{y_{2}x_{3}}{x_{2}y_{3}} \cdot m \in \Supp(r) & & (\ref{lem:cycling} \text{ iteratively}) \\
    &\iff \frac{x_{1}y_{2}}{y_{1}x_{2}} \cdot m \in \Supp(r) & & (\ref{lem:cycling_via_Q} \text{ via } Q_{2}) \\
    &\iff \frac{x_{1}y_{4}}{y_{1}x_{4}} \cdot m \in \Supp(r) & & (\ref{lem:cycling} \text{ via } P_{2,4}).
\end{align*}
The statement now follows from Lemma \ref{lem:Jn_m_in_Supp_f} applied to $Q_{2}$ as we have that 
\begin{align*}
    \card { \left\{ \frac{y_{1}x_{3}}{x_{1}y_{3}} \cdot m, \frac{x_{1}y_{3}}{y_{1}x_{3}} \cdot m, m\right \} \cap \Supp(r) } \equiv 0 \mod{2}.
\end{align*}
\eproof

\bcor 
\label{cor:Jn_cycling_xi_by_yi}
With the setup as in \ref{setup:Jn}, let $r \in J$. If 
\begin{equation}
    \label{eqn:Jn_cycling_xi_by_yi_1}
    \frac{x_{i}}{y_{i}} \cdot m \in \Supp(r) \quad  (\text{respectively, } \frac{y_{i}}{x_{i}} \cdot m \in \Supp(r) )
\end{equation}
for some $i \in \{1,4,5\}$, then the same holds for all $i \in \{1,4,5\}$.
\ecor 

\bproof
Lemma \ref{lem:cycling_via_Q} applied to $Q_{1}$ and $Q_{2}$ implies that 
\begin{align*}
    \frac{x_{1}}{y_{1}} \cdot m \in \Supp(r) \iff \frac{x_{2n+4}}{y_{2n+4}} \cdot m \in \Supp(r)  \text{ and } \frac{x_{3}}{y_{3}} \cdot m \in \Supp(r),
\end{align*}
respectively. Lemma \ref{lem:cycling} implies that 
\begin{align*}
\frac{x_{2n+4}}{y_{2n+4}} \cdot m \in \Supp(r)  \text{ and } \frac{x_{3}}{y_{3}} \cdot m \in \Supp(r)
\end{align*}
if and only if 
\begin{align*}
\frac{x_{4}}{y_{4}} \cdot m \in \Supp(r)  \text{ and } \frac{x_{5}}{y_{5}} \cdot m \in \Supp(r),
\end{align*}
respectively.
\eproof

\bcor 
\label{cor:Jn_restrictions_elements_in_support_of_colon}
Let the setup be as in \ref{setup:Jn}. If $r \in J$ and $r' \in \Supp(r)$, then 
\begin{align*}
    r' \notin \left\{ \frac{x_{i}}{x_{1}y_{1}y_{i}} \cdot m, \frac{y_{i}}{x_{1}y_{1}x_{i}} \cdot m, \frac{x_{i}}{y_{1}y_{i}} \cdot m, \frac{y_{i}}{x_{1}x_{i}} \cdot m \right\}_{i=4,5}.
\end{align*}
\ecor  

\bproof
By Lemma \ref{lem:forbiddn_support_of_Q} it follows for $r \in \bigcap_{i=1,2} (Q_{i}^{[2]}:J_{G})$ and $r' \in \Supp(r)$ that 
\begin{align*}
    r' \notin \left\{ \frac{x_{i}}{x_{1}y_{1}y_{i}} \cdot m, \frac{y_{i}}{x_{1}y_{1}x_{i}} \cdot m, \frac{x_{i}}{y_{1}y_{i}} \cdot m, \frac{y_{i}}{x_{1}x_{i}} \cdot m \right\}_{i=3,2n+4}.
\end{align*}
The claim now follows by applying Lemma \ref{lem:cycling}.
\eproof

\bcor
\label{cor:restrictions_supp_r}
Let the setup be as in \ref{setup:Jn}. If $r \in J$ and $r' \in \Supp(r)$, then
\begin{align*}
    r' \notin \left\{ \frac{x_{i}}{x_{1}y_{i}} \cdot m, \frac{y_{i}}{y_{1}x_{i}} \cdot m, \right \}_{i=4,5} \cup \left \{ \frac{1}{y_{1}} \cdot m, \frac{1}{x_{1}} \cdot m \right \}.
\end{align*}
\ecor 

\bproof
Lemma \ref{lem:cycling} implies that
\begin{equation}
    \label{eqn:elt_in_supp_r}
    \frac{x_{5}}{x_{1}y_{5}} \cdot m \notin \Supp(r) \iff \frac{x_{2n+5}}{x_{1}y_{2n+5}} \cdot m \notin \Supp(r)
\end{equation}
Lemma \ref{lem:elements_in_supp_of_Q} applied to $Q_{1}$ with $p = x_{2n+5}$, $q = y_{2n+5}$, $a = 1$, $c = 2n+4$, and $d = 2n+6$ implies that equation \eqref{eqn:elt_in_supp_r} is equivalent to 
\begin{equation}
    \label{eqn:elt_in_supp_r_2}
    \frac{x_{2n+5}}{y_{2n+5}}\cdot  \frac{y_{2n+6}}{y_{1}x_{2n+6}} \cdot m \notin \Supp(r).
\end{equation}
Lemma \ref{lem:cycling} implies that equation \eqref{eqn:elt_in_supp_r_2} is equivalent to 
\begin{equation}
    \label{eqn:elt_in_supp_r_3}
    \frac{1}{y_{1}} \cdot \frac{y_{2}x_{3}}{x_{2}y_{3}} \cdot m \notin \Supp(r).
\end{equation}
Now, Lemma \ref{lem:forbiddn_support_of_Q} applied via $Q_{2}$ with $a = 3$, $c =1$, $d = 2n+7$, $p = y_{2}$, and $q = x_{2}$ implies equation \eqref{eqn:elt_in_supp_r_3}. A similar argument shows that
\begin{align*}
    r' \notin \left \{\frac{x_{i}}{x_{1}y_{i}} \cdot m, \frac{y_{i}}{y_{1}x_{i}} \cdot m, \right \}_{i=4,5}.
\end{align*}

Lemma \ref{lem:elements_in_supp_of_Q} applied via $Q_{1}$ shows that 
\begin{align*}
    \frac{1}{y_{1}} \cdot m \notin \Supp(r) \iff \frac{x_{2n+4}}{x_{1}y_{2n+4}} \cdot m \notin \Supp(r).
\end{align*}
Lemma \ref{lem:cycling} implies that 
\begin{equation}
\label{eqn:elt_in_supp_r_4}
\begin{aligned}
    \frac{x_{2n+4}}{x_{1}y_{2n+4}} \cdot m \notin \Supp(r)  \iff \frac{x_{4}}{x_{1}y_{4}} \cdot m \notin \Supp(r).
\end{aligned}
\end{equation}
Equation \eqref{eqn:elt_in_supp_r_4} has been previously established in this proof. A similar argument proves that 
\begin{align*}
    \frac{1}{x_{1}} \cdot m \notin \Supp(r).
\end{align*}
\eproof

\blem 
\label{lem:jn_leftover_elements_not_killed_modulo_ideal}
Let the setup be as in \ref{setup:Jn}. We denote by $S$ the set of squarefree monomials of degree $2$ on the variables $\{x_{i},y_{i} \mid i = 1,4,5\}$. Define the ideals
\begin{align*}
    \a_{0} &:= (\{x_{j}^{2},y_{j}^{2}\}_{j=6}^{2n+7}) \\
    \a_{1} &:= (x_{2}^{2},y_{2}^{2},x_{3}^{2},y_{3}^{2}) + ( \{x_{j}^{3}, y_{j}^{3}\}_{j=1,4,5} ) \\
    \a_{2} &:= ( S \setminus  \{x_{1}y_{4}, x_{1}y_{5}\} )^{[2]} \\
    \a_{3} &:= ( \{ x_{j}^{2}y_{j}, y_{j}^{2}x_{j}\}_{j = 1,4,5} ),
\end{align*}
and $L := \sum_{i=1}^{3} \a_{i}$. Let $r \in J$ and $r' \in \Supp(r)$. If $r' \notin L + \a_{0}$, then $r'$ belongs to the following set:
\begin{align*}
    \A_{1}\cup \A_{2} \cup \A_{3}, 
\end{align*}
where $\A_{i} := \{h_{i,1},h_{i,2},h_{i,3}\}$ for $1 \leq i \leq 3$ and
\begin{align*}
    h_{1,1} &:= \frac{x_{1}y_{4}}{y_{1}x_{4}} \cdot m 
    & h_{1,2} &:= \frac{x_{1}y_{5}}{y_{1}x_{5}} \cdot m
    & h_{1,3} &:= 1\cdot m  \\
    h_{2,1} &:= \frac{x_{1}}{y_{1}} \cdot m 
    & h_{2,2} &:= \frac{x_{4}}{y_{4}} \cdot m
    & h_{2,3} &:= \frac{x_{5}}{y_{5}} \cdot m  \\
    h_{3,1} &:= \frac{y_{1}}{x_{1}} \cdot m     
    & h_{3,2} &:= \frac{y_{4}}{x_{4}} \cdot m 
    & h_{3,3} &:= \frac{y_{5}}{x_{5}} \cdot m.
\end{align*}
\elem 

\bproof
We have that
\begin{align*}
    r \in \bigcap_{i=3,4} P_{i,i+2}^{[2]}:J_{G}.
\end{align*}
Since $r' \notin L + \a_{0}$, Lemma \ref{lem:r_in_P_div_by_omega_S} implies that $\omega_{S} \mid r'$ for $S = [2n+7] \setminus \{1,4,5\}$.

\ul{Case 1}. Suppose that $r'$ is not divisible by $x_{i}^{2}$ nor by $y_{i}^{2}$ for $i = 4,5$. Then, Lemma \ref{lem:supp_Q_div_bin_supp} applied to $Q_{1}$ implies that  $\omega_{T} \mid r'$ where $T = [2n+7] \setminus \{1\}$, and that $x_{1} \mid r'$ or that $y_{1} \mid r'$. Thus, we have that
\begin{align*}
    r'  = \frac{d'}{x_{1}y_{1}} \cdot m
\end{align*}
for some $d' \in \{x_{1},y_{1},x_{1}y_{1},x_{1}^{2},y_{1}^{2}\}$. 
Corollary \ref{cor:restrictions_supp_r} implies that 
\begin{enumerate}
\item $d' = x_{1}y_{1}$, in which case $r' = h_{1,3}$,
\item $d' = x_{1}^{2}$, in which case $r' = h_{2,1}$.
\item $d' = y_{1}^{2}$, in which case $r' = h_{3,1}$.
\end{enumerate}

\ul{Case 2}. Suppose that $r'$ is divisible by $x_{4}^{2}$. Then $r'$ is not divisible by $y_{i}^{2}$ for $i=1,4,5$, $x_{1}^{2}$, or $x_{5}^{2}$ (otherwise, $r' \in \a_{2}$). The requirement that
\begin{equation}
	\label{eqn:Jn_r'_divisibility_1}
    r \in P_{2,4}^{[2]} : J_{G}
\end{equation}
implies via Lemma \ref{lem:r_in_P_div_by_omega_S} that $x_{5}y_{5} \mid r'$. Hence, 
\begin{align*}
    r' = \frac{d' x_{4}}{x_{1}y_{1}y_{4}} \cdot m
\end{align*}
for some monomial $d'$ with $\Supp(d') \in \{1,x_{1},y_{1},x_{1}y_{1}\}$. (We notice that $y_{4} \nmid d'$; otherwise, $r' \in \a_{3}$.) Corollaries \ref{cor:Jn_restrictions_elements_in_support_of_colon} and \ref{cor:restrictions_supp_r} imply that $d' = x_{1}y_{1}$, and thus that $r' = h_{2,2}$.

\ul{Case 3}. Suppose that $r'$ is divisible by $x_{5}^{2}$. This case is similar to case 2, and we find that $r' = h_{2,3}$.

\ul{Case 4}. Suppose that $r'$ is divisible by $y_{4}^{2}$. As in case 2 we have that 
\begin{align*}
    r' = \frac{d' y_{4}}{x_{1}y_{1}x_{4}} \cdot m
\end{align*}
for some monomial $d'$ with $d' \in \{1,x_{1},y_{1},x_{1}y_{1},x_{1}^{2} \}$. Corollary \ref{cor:Jn_restrictions_elements_in_support_of_colon} implies that $d' = x_{1}y_{1}$ (in which case $r' = h_{3,2}$) or that $d' = x_{1}^{2}$ (in which case $r' = h_{1,1}$).

\ul{Case 5.} Suppose that $r'$ is divisible by $y_{5}^{2}$. This case is similar to case 4, and we find that $r' = h_{1,2}$ or that $r' = h_{3,3}$.
\eproof

\bprop
\label{prop:Jn_J_contained_in_Q}
Let the setup be as in \ref{setup:Jn}, and let $L$ and $\a_{0}$ be defined as in Lemma \ref{lem:jn_leftover_elements_not_killed_modulo_ideal}. Then, 
\begin{align*}
    J + L + \a_{0} \subset (h_{1,1} + h_{1,3},h_{1,2}+h_{1,3},h_{2,1}+h_{2,2}+h_{2,3},h_{3,1}+h_{3,2}+h_{3,3}) + L + \a_{0}.
\end{align*}
\eprop

\bproof
Let $r \in J$. Let $S := \Supp(r) \setminus (L + \a_{0})$. Define the sets
\begin{align*}
    S_{1} &= \{h_{1,1},h_{1,3}\} \\
    S_{2} &= \{h_{1,2},h_{1,3}\} \\
    S_{3} &= \{h_{1,1},h_{1,2}\} \\
    S_{4} &= \{h_{2,1},h_{2,2},h_{2,3}\} \\
    S_{5} &= \{h_{3,1},h_{3,2},h_{3,3}\}.
\end{align*}
Corollaries \ref{cor:Jn_degree_two_monomial_in_Supp} and \ref{cor:Jn_cycling_xi_by_yi} and Lemma \ref{lem:jn_leftover_elements_not_killed_modulo_ideal} imply that there exists $1 \leq t \leq 5$ and $1 \leq i_{1} < \cdots < i_{t} \leq 5$ satisfying $\# \left( \{i_{1},\ldots,i_{t}\} \cap \{1,2,3\} \right) \leq 1$ such that 
\begin{equation}
    \label{eqn:Jn_reln_1}
    S = S_{i_{1}} \sqcup \cdots \sqcup S_{i_{t}}.
\end{equation}
It follows that
\begin{align*}
    r = \sum_{j = 1}^{t} \sum_{g\in S_{i_{j}}} g \mod{(L + \a_{0})}.
\end{align*}
For $1 \leq i \leq 5$ we have that 
\begin{align*}
    \sum_{g\in S_{i}} g \in (h_{1,1} + h_{1,3},h_{1,2}+h_{1,3},h_{2,1}+h_{2,2}+h_{2,3},h_{3,1}+h_{3,2}+h_{3,3}),
\end{align*}
which completes the proof.
\eproof

\bthm
\label{thm:Jn_not_F_pure}
Let $n \geq 0$ be a positive integer, and $G$ the graph $\mathrm{co\!-\!XF}_1^{2n+3}$. Then $J_{G}$ is not F-pure.
\ethm

\bproof
When $n = 0$, it can be verified via Macaulay2 that $J_{G}$ is not F-pure. Hence, it suffices to consider the case $n \geq 1$. We utilize the notation for $h_{i,j}$ and $L$ as they are defined in \ref{lem:jn_leftover_elements_not_killed_modulo_ideal}. We will use Proposition \ref{prop:reduction_step_fedder_computation} to prove that $J_{G}$ is not F-pure. Adopting the notation from Proposition \ref{prop:reduction_step_fedder_computation} we have that $A = \{2n+6,2n+7\}$, $B = [2n+7] \setminus (A\cup \{1\})$, $Q = P(A,B)$, $m = 5$, $B' = [5] \setminus \{1\}$, $Q' = P(\varnothing,B')$. We define the ideal
\begin{align*}
    K := (h_{1,1} + h_{1,3},h_{1,2}+h_{1,3},h_{2,1}+h_{2,2}+h_{2,3},h_{3,1}+h_{3,2}+h_{3,3}) + L.
\end{align*}
We define $\mc{C} := \{P_{i,i+2} \mid 2 \leq i \leq 2n+3\}\cup \{Q_{i} \mid i \in \{1,2\}\}$. Proposition \ref{prop:Jn_J_contained_in_Q} implies that the containment \eqref{eqn:reduction_step_fedder_computation_assumption_1} of Proposition \ref{prop:reduction_step_fedder_computation} is satisfied. We verify using Macaulay2 that the containment \eqref{eqn:reduction_step_fedder_computation_assumption_2} of Proposition \ref{prop:reduction_step_fedder_computation} is satisfied. Thus, Proposition \ref{prop:reduction_step_fedder_computation} implies that $J_{G}$ is not F-pure.
\eproof

\subsection{The Graph $\mathbf{co\!-\!XF_5^{2n+3}}$}

We define the graph $\mathrm{co\!-\!XF}_5^{2n+3}$.
\bdefn
\label{defn:Jn1}
For $n \geq 0$, $\mathrm{XF}_5^{2n+3}$ denotes the graph on $2n+8$ vertices having edge set 
\begin{align*}
\{ &\{1,i\}_{i = 3}^{2n+6}, \{2,i\}_{i = 3}^{2n+6}, \{i,i+1\}_{i=2}^{2n+5}, \{2, 2n+7\}, \{2n+6, 2n+7\},\\
&\{1,2n+8\}, \{3,2n+8\} \}.
\end{align*}
Then, $\mathrm{co\!-\!XF}_5^{2n+3} := \ol{\mathrm{XF}_5^{2n+3}}$.
\edefn


\def\edJnO{2}

\def\ptsJnO{.382683*\edJnO/-.92388*\edJnO/2/0,
.92388*\edJnO/-.382683*\edJnO/3/0,
.92388*\edJnO/.382683*\edJnO/4/0,
.382683*\edJnO/.92388*\edJnO/5/0,
-.92388*\edJnO/-.382683*\edJnO/2n+6/180,
-.382683*\edJnO/-.92388*\edJnO/1/180,
3*0.6532815*\edJn/3*-0.6532815*\edJn/2n+8/0,
3*-0.6532815*\edJn/3*-0.6532815*\edJn/2n+7/180
,
-.92388*\edJn/.382683*\edJn/2n+5/180
}
    
\def\ptsJnOUnlabel{-.382683*\edJnO/.92388*\edJnO/6}

\def\edgesJnO{
    2/3,
    2/4,
    2/5,
    2/6,
    2/2n+5,
    2/2n+5,
    3/4,
    4/5,
    6/2n+5,
    2n+5/2n+6,
    2n+5/1,
    3/1,
    4/1,
    5/1,
    6/1,
    2n+5/1,
    2n+6/2n+7,
    2/2n+7,
    3/2n+8,
    1/2n+8,
    1/2n+6}

\begin{figure}[htb]
\begin{center}
\begin{tikzpicture}
	\foreach \x/\y/\z/\w in \ptsJnO {
		\draw[fill = black!50] (\scale*\x,\scale*\y) circle [radius = 0.1] node[label = {[label distance = 0.05 cm]\w: $\z$}] (\z) {}; 
	}
 
\foreach \x/\y/\z in \ptsJnOUnlabel {
		\draw[fill = black!50] (\scale*\x,\scale*\y) circle [radius = 0.1] node (\z) {}; 
	}
 
\foreach \x/\y in \edgesJnO { \draw (\x) -- (\y); }  

\draw (5) -- (6) node [pos = 0.5, sloped, above] {{\large $\mathbf{\cdots}$}};

\end{tikzpicture}
\end{center}
\caption{The graph $\mathrm{XF}_5^{2n+3}$, $n\geq 0$} \label{graph:JnO}
\end{figure}

\bthm
\label{thm:Jn1_not_F_pure}
Let $n \geq 0$ be a positive integer, and $G$ the graph $\mathrm{co\!-\!XF}_5^{2n+3}$. Then, $J_{G}$ is not F-pure.
\ethm 

\bproof
After completing $G$ at the vertex $1$ we have that vertex $2$ is connected to the vertices $2n+7$ and $2n+8$. Now, the induced subgraph of $G$ on the vertices $[2n+8] \setminus \{1\}$ is isomorphic to the graph $\mathrm{co\!-\!XF}_1^{2n+3}$. As $\mathrm{co\!-\!XF}_1^{2n+3}$ is not F-pure for $n\geq 0$ (Theorem \ref{thm:Jn_not_F_pure}), it follows that $\mathrm{co\!-\!XF}_5^{2n+3}$ is not F-pure by Lemma \ref{lem:F_split_descends_to_induced_subgraph}.
\eproof

\subsection{The Graph $\mathbf{co\!-\!XF_6^{2n+2}}$}

We define the graph $\mathbf{co\!-\!XF}_6^{2n+2}$.
\bdefn
\label{defn:Jn2}
For $n \geq 0$, $\mathrm{XF}_6^{2n+2}$ denotes the graph on $2n+7$ vertices having edge set 
\begin{align*}
\{ \{1,i\}_{i = 2}^{2n+5}, \{2,i\}_{i = 3}^{2n+5}, \{i,i+1\}_{i=2}^{2n+4}, \{2,2n+6\}, \{2n+5,2n+6\}, \{1, 2n+7\}, \{3, 2n+7\} \}.
\end{align*}
Then, $\mathrm{co\!-\!XF}_6^{2n+2} := \ol{\mathrm{XF}_6^{2n+2}}$
\edefn


\def\edJnT{2}

\def\ptsJnT{-.433884*\edJnT/-.900969*\edJnT/1/180,
.433884*\edJnT/-.900969*\edJnT/2/0,
.974928*\edJnT/-.222521*\edJnT/3/0,
.781831*\edJnT/.62349*\edJnT/4/0,
-.781831*\edJnT/.62349*\edJnT/2n+4/180,
-.974928*\edJnT/-.222521*\edJnT/2n+5/180,
1.5*-1.408812*\edJnT/1.5*-1.12349*\edJnT/2n+6/180,
1.5*1.408812*\edJnT/1.5*-1.12349*\edJnT/2n+7/0
}
    
\def\ptsJnTUnlabel{0*\edJnT/1*\edJnT/5}

\def\edgesJnT{
    1/2,
    1/3,
    1/4,
    1/5,
    1/2n+4,
    1/2n+5,
    2/3,
    3/4,
    5/2n+4,
    2n+4/2n+5,
    2/4,
    2/5,
    2/2n+4,
    2/2n+5,
    2/2n+6,
    2n+5/2n+6,
    1/2n+7,
    3/2n+7}

\begin{figure}[htb]
\begin{center}
\begin{tikzpicture}
	\foreach \x/\y/\z/\w in \ptsJnT {
		\draw[fill = black!50] (\scale*\x,\scale*\y) circle [radius = 0.1] node[label = {[label distance = 0.05 cm]\w: $\z$}] (\z) {}; 
	}
 
\foreach \x/\y/\z in \ptsJnTUnlabel {
		\draw[fill = black!50] (\scale*\x,\scale*\y) circle [radius = 0.1] node (\z) {}; 
	}
 
\foreach \x/\y in \edgesJnT { \draw (\x) -- (\y); }  

\draw (4) -- (5) node [pos = 0.5, sloped, above] {{\large $\mathbf{\cdots}$}};

\end{tikzpicture}
\end{center}
\caption{The graph $\mathrm{XF}_6^{2n+2}$, $n\geq 0$} \label{graph:JnT}
\end{figure}

\brem
The labeling of the vertices in defining $\mathrm{co\!-\!XF}_6^{2n+2}$ in Definition \ref{defn:Jn2} is different than the labeling of the vertices as defined in Trotter \cite{trotter1992combinatorics}.
\erem 

We establish common notation that we will use in the remainder of this subsection.

\begin{setup}
\label{setup:Jn2}
By $n$ we denote a positive integer greater than or equal to $2$.\footnote{We will treat the cases of $\mathrm{co\!-\!XF}_6^{2}$ and $\mathrm{co\!-\!XF}_6^{4}$ separately.} We denote by $G$ the graph $\mathrm{co\!-\!XF}_6^{2n+2}$. For $3 \leq i \leq 2n+3$, we denote by $P_{i,i+2}$ the ideal
\begin{align*}
    P_{i,i+2} := P([2n+7]\setminus\{1,2,i,i+1,i+2\},\{i,i+2\}).
\end{align*}
We define the ideals
\begin{align*}
Q_{1} &:= P(\{ j \mid 5 \leq j \leq 2n+6 \}, \{2,4,2n+7\}) \\
Q_{2} &:= P( \{j \mid 3 \leq j \leq 2n+3 \text{ or } j = 2n+7 \}, \{1,2n+4,2n+6\} ) \\
Q_{3} &:= P(\{2n+6,2n+7\}, \{ 3 \leq j \leq 2n+5 \} ).
\end{align*}
We denote by $J$ the ideal 
\begin{align*}
    J := \bigcap_{i=3}^{2n+3} \left( P_{i,i+2}^{[2]}:J_{G} \right) \bigcap \bigcap_{i=1,2} \left( Q_{i}^{[2]}:J_{G} \right).
\end{align*}
We denote by $m$ the monomial
\begin{align*}
    m &:= \prod_{j=1}^{2n+7} x_{j}y_{j}.
\end{align*}
\end{setup}

\blem
\label{lem:jn2_min_primes}
With setup as in \ref{setup:Jn2}, we have that the following ideals are minimal primes of $J_{G}$:
\begin{enumerate}
\item $P_{i,i+2} := \left( f_{i,i+2},\{x_{j},y_{j}\}_{j\in[2n+7] \setminus \{ 1,2,i,i+1,i+2 \}} \right)$ where $3 \leq i \leq 2n+3$, and
\item $Q_{i}$ for $1 \leq i \leq 3$.
\end{enumerate}
\elem

\bproof
The proof is similar to the proof of Lemma \ref{lem:Jn_min_primes}
\eproof

\blem
\label{lem:colon_minl_primes_J_n_2}
With setup as in \ref{setup:Jn2}, we have that
\begin{enumerate}
\item For $3 \leq i \leq 2n+3$,
\begin{align*}
P_{i,i+2}^{[2]}:J_{G} = P_{i,i+2} + (f_{i,i+2} \, \omega_{A}).
\end{align*}
\item For $i = 1,2$, we have that
\begin{align*}
Q_{i}^{[2]}:J_{G} = Q_{i}^{[2]} + L_{Q_{i}}.
\end{align*}
\end{enumerate}
\elem

\bproof
Follows from Theorem \ref{thrm:colon_minl_prime}. When applying Theorem \ref{thrm:colon_minl_prime} to $Q_{1}$, we let $a = 4$, $b = 3$, $c = 2n+7$, and $d = 2$. When applying Theorem \ref{thrm:colon_minl_prime} to $Q_{2}$, we let $a = 2n+4$, $b = 2n+5$, $c = 2n+6$, and $d = 1$.
\eproof

\bcor
\label{cor:Jn2_restrictions_supp_r}
Let the setup be as in \ref{setup:Jn2}. If $r \in J$ and $r' \in \Supp(r)$, then
\begin{align*}
    r' \notin \left \{ \frac{1}{x_{1}x_{2}}\cdot m, \frac{1}{y_{1}y_{2}}\cdot m \right \}.
\end{align*}
\ecor 

\bproof
We have that
\begin{align*}
    \frac{1}{y_{1}y_{2}} \cdot m \in \Supp(r) 
    &\iff \frac{x_{4}}{y_{1}x_{2}y_{4}}\cdot m \in \Supp(r) & &  (\ref{lem:elements_in_supp_of_Q} : P = Q_{1}, a = 2, c = 4, q = y_{1}) \\
    &\iff \frac{x_{2n+4}}{y_{1}x_{2}y_{2n+4}} \in \Supp(r) & & (\ref{lem:cycling} \text{ iteratively}).
\end{align*}
Lemma \ref{lem:forbiddn_support_of_Q} applied to $Q_{2}$ with $a = 2n+4$, $c = 1$, and $q = x_{2}$ implies that 
\begin{align*}
    \frac{x_{2n+4}}{y_{1}x_{2}y_{2n+4}} \notin \Supp(r).
\end{align*}
Similarly, it is proven that 
\begin{align*}
    \frac{1}{x_{1}x_{2}} \notin \Supp(r).
\end{align*}
\eproof

\bcor
\label{cor:Jn2_structure_or_r'}
Let the setup be as in \ref{setup:Jn2}. Let $r \in J$. Let $q$ be a squarefree monomial with $\Supp(q) \subset \{x_{i},y_{i}\}_{i=1,2}$. If
\begin{align*}
    \frac{x_{4}}{qy_{4}}\cdot m \in \Supp(r),
\end{align*}
then $q$ is not divisible by $y_{1}$ or by $y_{2}$.
\ecor 

\bproof
Lemma \ref{lem:forbiddn_support_of_Q} applied via $Q_{1}$ implies that $y_{2}$ does not divide $q$. Lemma \ref{lem:cycling} implies that 
\begin{align*}
    \frac{x_{4}}{qy_{4}}\cdot m \in \Supp(r) \iff \frac{x_{2n+4}}{qy_{2n+4}}\cdot m \in \Supp(r).
\end{align*}
Lemma \ref{lem:forbiddn_support_of_Q} applied via $Q_{2}$ implies that $y_{1}$ does not divide $q$.
\eproof

\blem
\label{lem:Jn2_restriction_r'}
Let the setup be as in \ref{setup:Jn2}. Let $r \in J$. Let $q$ be a squarefree monomial with $\Supp(q) \subset \{x_{i}, y_{i}\}_{i=1,2}$. Moreover, suppose that $\#\Supp(q) \geq 3$. Then, 
\begin{align*}
\frac{x_{5}}{qy_{5}} \cdot m \notin \Supp(r).
\end{align*}
\elem 

\bproof
We define the elements 
\begin{align*}
    r' := \frac{x_{5}}{qy_{5}} \cdot m, \quad r'' := \frac{x_{3}}{qy_{3}} \cdot m, \quad \text{ and } \quad r''' := \frac{x_{2n+5}}{qy_{2n+5}} \cdot m.
\end{align*}
Suppose that $r' \in \Supp(r)$. Then, Lemma \ref{lem:cycling} implies that $r''$ and $r'''$ also belong to $\Supp(r)$. Applying Lemma \ref{lem:supp_Q_div_bin_supp} to $r''$ via $Q_{1}$ implies that $x_{2}$ or $y_{2}$ divides $r''$. Hence, $q$ is not divisible by $x_{2}$ or by $y_{2}$. Similarly, applying Lemma \ref{lem:supp_Q_div_bin_supp} to $r'''$ via $Q_{2}$ implies that $x_{1}$ or $y_{1}$ divides $r'''$. Therefore, $q$ is not divisible by $x_{1}$ or by $y_{1}$. This leads to the conclusion that $\# \Supp(q) \leq 2$.
\eproof 

\blem
\label{lem:Jn2_restriction_r'_numerator}
Let the setup be as in \ref{setup:Jn2}. Let $r \in J$. For $p \in \{1,y_{1},y_{2}\}$, we have that 
\begin{align*}
\frac{p x_{5}}{x_{1}x_{2}y_{5}} \cdot m \notin \Supp(r).
\end{align*}
\elem 

\bproof 
When $p = 1$, we have that
\begin{align*}
    &\phantom{\iff\,\,\,} \frac{x_{5}}{x_{1}x_{2}y_{5}}\cdot m \in \Supp(r)\\
    &\iff \frac{x_{3}}{x_{1}x_{2}y_{3}}\cdot m \in \Supp(r) & & (\ref{lem:cycling}) \\
    &\iff \frac{x_{3}}{x_{1}y_{3}} \cdot \frac{y_{4}}{y_{2}x_{4}}\cdot m \in \Supp(r) & & (\ref{lem:elements_in_supp_of_Q} : Q_{1}, a = 2, c = 4, d = 2n+7, p = x_{3}, q = x_{1}y_{3}) \\
    &\iff \frac{y_{2n+4}x_{2n+5}}{x_{1}y_{2}x_{2n+4}y_{2n+5}} \cdot m \in \Supp(r) & & (\ref{lem:cycling} \text{ iteratively}).
\end{align*}
Lemma \ref{lem:forbiddn_support_of_Q} applied to $Q_{2}$ with $a= 2n+4$, $c = 1$, $d = 2n+4$, $p = x_{2n+5}$, and $q = y_{2}y_{2n+5}$ implies that 
\begin{align*}
    \frac{y_{2n+4}x_{2n+5}}{x_{1}y_{2}x_{2n+4}y_{2n+5}} \cdot m \notin \Supp(r).
\end{align*}

When $p = y_{1}$, we have that
\begin{align*}
     \frac{y_{1}x_{5}}{x_{1}x_{2}y_{5}}\cdot m \in \Supp(r)
    &\iff \frac{y_{1}x_{2n+5}}{x_{1}x_{2}y_{2n+5}}\cdot m \in \Supp(r) & & (\ref{lem:cycling} \text{ iteratively}) \\
    &\iff \frac{y_{2n+4}x_{2n+5}}{x_{2}x_{2n+4}y_{2n+5}}\cdot m \in \Supp(r) & & (\ref{lem:cycling_via_Q} : Q_{2}) \\
    &\iff \frac{x_{3}y_{4}}{x_{2}y_{3}x_{4}}\cdot m \in \Supp(r) & & (\ref{lem:cycling} \text{ iteratively}).
\end{align*}
Lemma  \ref{lem:forbiddn_support_of_Q} applied to $Q_{1}$ with $a= 4$, $c = 2$, $d = 2n+7$, $p = x_{3}$, and $q = y_{3}$ implies that 
\begin{align*}
    \frac{x_{3}y_{4}}{x_{2}y_{3}x_{4}}\cdot m \notin \Supp(r).
\end{align*}

When $p = y_{2}$, it is shown that 
\begin{align*}
    \frac{y_{2}x_{5}}{x_{1}x_{2}y_{5}}\cdot m \notin \Supp(r)
\end{align*}
similarly as in the case where $p = y_{1}$.
\eproof

\bnota
For $f \in R$, $S \subset R$, and $T \subset R$, we define the sets
\begin{align*}
    f \cdot S &:= \{f g \mid g \in S \} \\
    S \cdot T &:= \{ a \cdot b \mid a \in S, \, b \in T \}.
\end{align*}
\enota

\blem 
\label{lem:Jn2_supp_r_divisibility}
Let the setup be as in \ref{setup:Jn2}. Define the ideals
\begin{align*}
    \a_{0} &:= (\{x_{j}^{2},y_{j}^{2}\}_{j=6}^{2n+7}) \\
    \a_{1} &:= ( x_{3}^{2},y_{3}^{2}) +  ( x_{1}^{2},x_{2}^{2},y_{4}^{2},y_{5}^{2} ) + (y_{1}^{3},y_{2}^{3},x_{4}^{3}, x_{5}^{3}) \\
    \a_{2} &:= ( x_{1}y_{1}^{2}, x_{2}y_{2}^{2}, x_{4}^{2}y_{4}, x_{5}^{2}y_{5}  ) \\
    \a_{3} &:= ( x_{1}x_{2}x_{4}^{2}, x_{1}x_{2}x_{5}^{2}, x_{4}^{2}x_{5}^{2} ) \\
    \a_{4} &:= ( x_{4}x_{5}y_{1}^{2}y_{4}y_{5}, x_{1}x_{4}x_{5}y_{2}^{2}y_{4}y_{5}, x_{4}x_{5}y_{1}y_{2}^{2}y_{4}y_{5}, y_{1}^{2}y_{2}^{2} ).
\end{align*}
and $L := \sum_{i=1}^{4} \a_{i}$. Let $r \in J$ and $r' \in \Supp(r)$. If $r' \notin L + \a_{0}$, then 
$r'$ belongs to the following set:
\begin{align*}
    \mc{A} \cup \mc{B}
\end{align*}
where 
\begin{align*}
    \mc{A} &:= h_{1,1} \cdot \{1 , x_{2}, y_{1}, x_{2}y_{1} \} \cup h_{1,2} \cdot \{ 1, x_{1} , y_{2}, x_{1}y_{2} \} \cup h_{1,3} \cdot \{1,x_{1},x_{2},y_{1},y_{2},x_{1}y_{2},x_{2}y_{1}\}
    \\ 
    \mc{B} &:= \{h_{2,1},h_{2,2} \} \cdot \{1,y_{1},y_{2},y_{1}y_{2} \}
\end{align*}
and
\begin{align*}
    h_{1,1} &:= \frac{1}{x_{2}y_{1}} \cdot m 
    & h_{1,2} &:=  \frac{1}{x_{1}y_{2}} \cdot m 
    & h_{1,3} &:= \frac{x_{4}}{x_{1}x_{2}y_{4}} \cdot m \\
    h_{2,1} &:= \frac{x_{5}}{x_{2}y_{1}y_{5}} \cdot m
    & h_{2,2} &:= \frac{x_{5}}{x_{1}y_{2}y_{5}} \cdot m
\end{align*}
\elem 

\bproof
Since $r \in P_{i,i+2}^{[2]}:J_{G}$ for $i = 3$ and $i = 4$ and $r' \notin L + \a_{0}$, Lemma \ref{lem:r_in_P_div_by_omega_S} implies that $\omega_{S} \mid r'$ for $S = [2n+7] \setminus \{1,2,4,5\}$.

\ul{Case 1}. Suppose that $r{'}$ is not divisible by $x_{4}^{2}$ nor by $x_{5}^{2}$. Since $n \geq 4$, $r \in P_{2n+3,2n+5}^{[2]}:J_{G}$, and $r' \notin L + \a_{0}$, Lemma \ref{lem:r_in_P_div_by_omega_S} implies that $x_{4}y_{4}x_{5}y_{5} \mid r'$. Consequently, we have that
\begin{align*}
    r' = \frac{d'}{x_{1}x_{2}y_{1}y_{2}} \cdot m
\end{align*}
for some monomial $d'$ with $\Supp(d') \subset \{x_{i},y_{i}\}_{i=1,2}$. Lemma \ref{lem:supp_Q_div_bin_supp} applied to $r'$ via $Q_{1}$ and $Q_{2}$ implies that $d'$ is divisible by $x_{i}$ or by $y_{i}$ for both $i = 1$ and $i = 2$. After accounting for the constraint that $r'$ does not belong to $\a_{1}$ nor to $\a_{2}$, we see that 
\begin{equation}
    \label{eqn:Jn2_supp_r_divisibility_set}
    d' \in \{x_{1},y_{1},x_{1}y_{1},y_{1}^{2}\} \cdot \{ x_{2},y_{2},x_{2}y_{2},y_{2}^{2} \}.
\end{equation}
Expanding the products in   \eqref{eqn:Jn2_supp_r_divisibility_set} and removing those terms which would force $r' \in L + \a_{0}$, we find that
\begin{align*}
    d' \in \{x_{1}x_{2},x_{1}y_{2}, x_{1}x_{2}y_{2} \} \cup \{y_{1}x_{2},y_{1}y_{2},y_{1}x_{2}y_{2}\} \cup \{x_{1}y_{1}x_{2},x_{1}y_{1}y_{2},x_{1}y_{1}x_{2}y_{2} \}.
\end{align*}
Lemma \ref{cor:Jn2_restrictions_supp_r} implies that $d' \neq x_{1}x_{2}$ and that $d' \neq y_{1}y_{2}$. For the remaining values of $d'$ we compute that\begin{align*}
    r' \in h_{1,1} \cdot \{1 , x_{2}, y_{1}, x_{2}y_{1} \} \cup h_{1,2} \cdot \{ 1, x_{1} , y_{2}, x_{1}y_{2} \}.
\end{align*}

\ul{Case 2}. Suppose that $r'$ is divisible by $x_{5}^{2}$. Since $r' \notin \a_{3}$, we conclude that $r'$ is not divisible by $x_{4}^{2}$. Lemma \ref{lem:r_in_P_div_by_omega_S} applied to $r'$ via $P_{5,7}$ implies that $x_{4}y_{4} \mid r'$. Hence we have that
\begin{align*}
    r' = \frac{d'\cdot x_{5}}{x_{1}x_{2}y_{1}y_{2}y_{5}} \cdot m
\end{align*}
for some monomial $d'$ where $\Supp(d') \subset \{x_{i},y_{i}\}_{i=1,2}$. The fact that $y_{5} \notin \Supp(d')$ follows from the condition that $r' \notin \a_{2}$. Suitable applications of Lemma \ref{lem:cycling} together with Lemma \ref{lem:supp_Q_div_bin_supp} applied to $r'$ via $Q_{1}$ and $Q_{2}$ implies that
\begin{align*}
    d' \in \{x_{1},y_{1},x_{1}y_{1},y_{1}^{2}\}\cdot \{x_{2},y_{2},x_{2}y_{2},y_{2}^{2}\}.
\end{align*}
Since $x_{1}x_{2}x_{5}^{2} \in \a_{3}$, we may suppose that $x_{1} x_{2} \nmid d'$. If $y_{1}y_{2} \mid d'$ and $\Supp(d') \subset \{y_{1},y_{2}\}$, then 
\begin{equation}
    \label{eqn:Jn2_supp_r_divisibility_impossibility}
    r' = \frac{p x_{5}}{x_{1}x_{2}y_{5}}\cdot m
\end{equation}
for $p \in \{1,y_{1},y_{2}\}$. However, Lemma \ref{lem:Jn2_restriction_r'_numerator} implies that equation \eqref{eqn:Jn2_supp_r_divisibility_impossibility} is impossible. Thus, we are reduced to considering 
\begin{align*}
    d' \in \{x_{1}y_{2},x_{1}y_{2}^{2},y_{1}x_{2},y_{1}x_{2}y_{2},x_{1}y_{1}y_{2},x_{1}y_{1}y_{2}^{2},y_{1}^{2}x_{2},y_{1}^{2}x_{2}y_{2}\}.
\end{align*}
For these values of $d'$ we compute that
\begin{align*}
    r' \in \{h_{2,1},h_{2,2} \}\cdot \{1,y_{1},y_{2},y_{1}y_{2} \}.
\end{align*}

\ul{Case 3}. Suppose that $r'$ is divisible by $x_{4}^{2}$ and that $r'$ is not divisible by $y_{2}^{2}$. Since $r' \notin \a_{3}$, it follows that $x_{5}^{2} \nmid r'$. Since $y_{2}^{2} \nmid r'$, Lemma \ref{lem:supp_Q_div_bin_supp} applied to $r'$ via $Q_{1}$ implies that $x_{5}y_{5} \mid r'$. Thus, 
\begin{align*}
    r' = \frac{d' \cdot x_{4}}{x_{1}x_{2}y_{1}y_{2}y_{4}} \cdot m
\end{align*}
for some monomial $d'$ with $\Supp(d') \subset \{ x_{i},y_{i} \}_{i=1,2}$. The fact that $y_{4} \notin \Supp(d')$ follows from the condition that $x_{4}^{2}y_{4} \in \a_{2}$. Since $r'$ is not divisible by $y_{4}$, Corollary \ref{cor:Jn2_structure_or_r'} implies that $y_{1}y_{2} \mid d'$. Thus, we have that
\begin{equation}
    \label{eqn:Jn2_supp_r_divisibility_d_prime_case_3}
    d' \in \{y_{1},x_{1}y_{1},y_{1}^{2}\} \cdot \{y_{2},x_{2}y_{2},y_{2}^{2}\}.
\end{equation}
We have that $d' \neq x_{1}y_{1}x_{2}y_{2}$ (since $x_{1}x_{2} y_{4}^{2} \in \a_{3}$), and we have that $d' \neq y_{1}^{2}y_{2}^{2}$ (since $y_{1}^{2}y_{2}^{2} \in \a_{4}$). For the remaining values of $d'$, we compute that
\begin{align*}
    r' \in h_{1,3} \cdot \{1,x_{1},x_{2},y_{1},y_{2},x_{1}y_{2},x_{2}y_{1}\}.
\end{align*}

\ul{Case 4.} Suppose that $r'$ is divisible by $x_{4}^{2}y_{2}^{2}$. Since $r' \notin \a_{2}$ it follows that $y_{4} \nmid r'$ and that $x_{2} \nmid r'$. Hence 
\begin{align*}
    r' = \frac{d' \cdot x_{4}y_{2}}{x_{1}x_{2}x_{5}y_{1}y_{4}y_{5}}\cdot m
\end{align*}
for some monomial $d'$ with $\Supp(d') \subset \{ x_{1},x_{5},y_{1}, y_{5} \}$. Applying Lemma \ref{lem:cycling} to $r'$ via $P_{4,6}$ implies that 
\begin{align*}
    r'' := \frac{d' \cdot x_{6}y_{2}}{x_{1}x_{2}x_{5}y_{1}y_{6}y_{5}}\cdot m \in \Supp(r).
\end{align*}
Since $x_{5}^{2} \nmid d'$ (otherwise, $r' \in \a_{3}$) and $y_{5}^{2} \nmid d'$ (otherwise, $r' \in \a_{1}$), Lemma \ref{lem:r_in_P_div_by_omega_S} applied to $r''$ via $P_{6,8}$ implies that $x_{5}y_{5} \mid r''$. Thus, we have that
\begin{align*}
    r' = \frac{d' \cdot x_{4}y_{2}}{x_{1}x_{2}y_{1}y_{4}}\cdot m
\end{align*}
for some monomial $d'$ with $\Supp(d') \subset \{ x_{1},y_{1} \}$. Lemma \ref{lem:cycling} implies that 
\begin{align*}
    r''' := \frac{d' \cdot x_{2n+4}y_{2}}{x_{1}x_{2}y_{1}y_{2n+4}}\cdot m \in \Supp(r).
\end{align*}
We observe that $y_{1}^{2} \nmid r'''$ (otherwise, $r' \in \a_{4}$), $y_{2n+4}\nmid r'''$, and $y_{2n+6}^{2} \nmid r'''$. Consequently, Lemma \ref{lem:supp_Q_div_bin_supp} applied to $r'''$ via $Q_{2}$ implies that $y_{1}$ divides $r'''$. Thus, $y_{1} \mid d'$, and we have that
\begin{align*}
    r' = \frac{d' \cdot x_{4}y_{2}}{x_{1}x_{2}y_{4}}\cdot m
\end{align*}
for some $d' \in \{1,x_{1}\}$. For $d' \in \{1,x_{1}\}$, we have that 
\begin{align*}
    r' \in h_{1,3} \cdot \{y_{2},x_{1}y_{2}\}.
\end{align*}
\eproof

\blem
\label{lem:Jn2_two_elts_in_supp_r}
Let the setup be as in \ref{setup:Jn2}. Let $r \in J$ and $p \in \{1,y_{1},y_{2},y_{1}y_{2}\}$. Then,
\begin{align*}
    \frac{p  x_{5}}{x_{2}y_{1}y_{5}} \cdot m \in \Supp(r) \iff \frac{p x_{5}}{x_{1}y_{2}y_{5}} \cdot m \in \Supp(r).
\end{align*}
\elem 

\bproof

When $p \in \{1,y_{1}\}$, we have that 
\begin{align*}
    &\phantom{\iff\,\,\,\,} \frac{px_{5}}{x_{2}y_{1}y_{5}}\cdot m \in \Supp(r)\\
    &\iff \frac{px_{3}}{x_{2}y_{1}y_{3}}\cdot m \in \Supp(r) & &(\ref{lem:cycling}) \\
    &\iff \frac{px_{3}y_{4}}{y_{1}y_{2}y_{3}x_{4}}\cdot m \in \Supp(r) & & (\ref{lem:elements_in_supp_of_Q} : Q_{1}, p' = px_{3}, q' = y_{1}y_{3}, a = 2, c = 4) \\
    &\iff \frac{py_{2n+4}x_{2n+5}}{y_{1}y_{2}x_{2n+4}y_{2n+5}}\cdot m \in \Supp(r) & & (\ref{lem:cycling} \text{ iteratively}).
\end{align*}

Specializing to the case that $p = 1$, we have that
\begin{equation}
    \label{eqn:Jn2_two_elts_in_supp_r_0}
    \frac{y_{2n+4}x_{2n+5}}{y_{1}y_{2}x_{2n+4}y_{2n+5}}\cdot m \in \Supp(r)
    \iff \frac{x_{2n+5}}{x_{1}y_{2}y_{2n+5}}\cdot m \in \Supp(r)
\end{equation}
by applying Lemma \ref{lem:elements_in_supp_of_Q} to $Q_{2}$ with $p' = x_{2n+5}$, $q' = y_{2}y_{2n+5}$, $a = 1$, and $c = 2n+4$. Finally, Lemma \ref{lem:cycling} applied to equation \eqref{eqn:Jn2_two_elts_in_supp_r_0} implies that 
\begin{align*}
    \frac{x_{2n+5}}{x_{1}y_{2}y_{2n+5}}\cdot m \in \Supp(r) \iff \frac{x_{5}}{x_{1}y_{2}y_{5}} \cdot m \in \Supp(r).
\end{align*}

Specializing to the case that $p = y_{1}$, we have that
\begin{align*}
    &\phantom{\iff\,\,\,\,} \frac{y_{2n+4}x_{2n+5}}{y_{2}x_{2n+4}y_{2n+5}}\cdot m \in \Supp(r)\\
    &\iff \frac{y_{1}x_{2n+5}}{x_{1}y_{2}y_{2n+5}}\cdot m \in \Supp(r) & &(\ref{lem:cycling_via_Q} : Q_{2}, p' = x_{2n+5}, q' = y_{2}y_{2n+5}) \\
    &\iff \frac{y_{1}x_{5}}{x_{1}y_{2}y_{5}} \cdot m \in \Supp(r) & & (\ref{lem:cycling} \text{ iteratively}). \\
\end{align*}

\ul{Case $p = y_{2}$:} The proof is analogous to the case where $p = y_{1}$.

\ul{Case $p = y_{1}y_{2}$:} 
We have that
\begin{align*}
\frac{y_{2}x_{5}}{x_{2}y_{5}}\cdot m \in \Supp(r) 
&\iff \frac{y_{2}x_{3}}{x_{2}y_{3}}\cdot m \in \Supp(r) & & (\ref{lem:cycling}) \\
&\iff \frac{y_{4}x_{3}}{x_{4}y_{3}}\cdot m \in \Supp(r) & &(\ref{lem:cycling_via_Q} : Q_{1}) \\
&\iff \frac{y_{2n+4}x_{2n+5}}{x_{2n+4}y_{2n+5}}\cdot m \in \Supp(r) & &(\ref{lem:cycling} \text{ iteratively}) \\
&\iff \frac{y_{1}x_{2n+5}}{x_{1}y_{2n+5}}\cdot m \in \Supp(r) & &(\ref{lem:cycling_via_Q} : Q_{2}) \\
&\iff \frac{y_{1}x_{5}}{x_{1}y_{5}} \cdot m \in \Supp(r) & &(\ref{lem:cycling} \text{ iteratively}).
\end{align*}
\eproof

\blem
\label{lem:Jn2_three_elts_in_supp_r}
Let the setup be as in \ref{setup:Jn2}. We utilize notation as introduced in Lemma \ref{lem:Jn2_supp_r_divisibility}. Let $r \in J$, then 
\begin{enumerate}
    \item $h_{1,1} \in \Supp(r)$ if and only if $h_{1,2} \in \Supp(r)$ if and only if $h_{1,3} \in \Supp(r)$,
    \item $x_{1} h_{1,2} \in \Supp(r)$ if and only if $x_{1} h_{1,3} \in \Supp(r)$,
    \item $y_{1} h_{1,1} \in \Supp(r)$ if and only if $y_{1} h_{1,3} \in \Supp(r)$,
    \item $x_{2} h_{1,1} \in \Supp(r)$ if and only if $x_{2} h_{1,3} \in \Supp(r)$, and
    \item $y_{2} h_{1,2} \in \Supp(r)$ if and only if $y_{2} h_{1,3} \in \Supp(r)$.
\end{enumerate}
\elem 

\bproof
(1) First, we observe that 
\begin{align*}
    \frac{1}{x_{1}y_{2}}\cdot m \in \Supp(r) \iff \frac{x_{4}}{x_{1}x_{2}y_{4}}\cdot m \in \Supp(r)
\end{align*}
by applying Lemma \ref{lem:elements_in_supp_of_Q} to $h_{1,2}$ with $Q_{1}$ and $p = 1$, $q = x_{1}$, $a = 2$, $c = 4$, and $d = 2n+7$.
Next, we observe that 
\begin{align*}
    &\phantom{\iff\,\,\,\,} \frac{1}{x_{2}y_{1}}\cdot m \in \Supp(r) \\
    &\iff \frac{x_{2n+4}}{x_{1}x_{2}y_{2n+4}} \cdot m \in \Supp(r) & & (\ref{lem:elements_in_supp_of_Q} : Q_{2}, p = 1, q = x_{2}, a = 1, c = 2n+4) \\
    &\iff \frac{x_{4}}{x_{1}x_{2}y_{4}} \cdot m \in \Supp(r) & & (\ref{lem:cycling} \text{ iteratively}).
\end{align*}

The proof of statements (2) and (4) are similar to the proof of (1), and we do not repeat these proofs.

The proof of statement (3) is similar to the proof of (5). Thus, we give the proof of (5) below. We have that
\begin{align*}
    y_{2}h_{1,2} = \frac{1}{x_{1}}\cdot m \in \Supp(r)
    &\iff \frac{y_{2n+4}}{y_{1}x_{2n+4}}\cdot m \in \Supp(r) & & (\ref{lem:elements_in_supp_of_Q}:\; Q_{2}, a=1,c= 2n+4) \\
    &\iff \frac{y_{4}}{y_{1}x_{4}}\cdot m \in \Supp(r) & & (\ref{lem:cycling} \text{ iteratively}) \\
    &\iff \frac{y_{2}}{y_{1}x_{2}}\cdot m \in \Supp(r) & &(\ref{lem:cycling_via_Q} : Q_{1}) \\
    &\iff \frac{y_{2}x_{2n+4}}{x_{1}x_{2}y_{2n+4}}\cdot m \in \Supp(r) & & (\ref{lem:elements_in_supp_of_Q}:\; Q_{2}, a=1, c = 2n+4) \\
    &\iff \frac{y_{2}x_{4}}{x_{1}x_{2}y_{4}}\cdot m \in \Supp(r) & & (\ref{lem:cycling} \text{ iteratively}).
\end{align*}
\eproof

\bcor 
\label{cor:Jn2_supp_r_exclusivity}
Let the setup be as in \ref{setup:Jn2}. We utilize notation as introduced in Lemma \ref{lem:Jn2_supp_r_divisibility}. Let $r \in J$, then
\begin{align*}
    \card{ \{m, x_{1}y_{2}h_{1,3}, x_{2}y_{1}h_{1,3}\} \cap \Supp(r)  } \equiv 0 \mod{2}.
\end{align*}
\ecor 

\bproof
Lemma \ref{lem:Jn_m_in_Supp_f} applied to $Q_{2}$ implies that
\begin{align*}
    \card{ \{m, \frac{x_{1}y_{2n+4}}{y_{1}x_{2n+4}}\cdot m, \frac{y_{1}x_{2n+4}}{x_{1}y_{2n+4}}\cdot m \} \cap \Supp(r) } \equiv 0 \mod{2}.
\end{align*}
Lemma \ref{lem:cycling} implies that 
\begin{align*}
    \frac{y_{1}x_{2n+4}}{x_{1}y_{2n+4}}\cdot m \in \Supp(r) \iff \frac{y_{1}x_{4}}{x_{1}y_{4}}\cdot m \in \Supp(r).
\end{align*}
We observe that
\begin{align*}
    \frac{x_{1}y_{2n+4}}{y_{1}x_{2n+4}}\cdot m \in \Supp(r)
    &\iff \frac{x_{1}y_{4}}{y_{1}x_{4}}\cdot m \in \Supp(r) & &(\ref{lem:cycling} \text{ iteratively}) \\
    &\iff \frac{x_{1}y_{2}}{y_{1}x_{2}}\cdot m \in \Supp(r) & &(\ref{lem:cycling_via_Q}: Q_{1}) \\
    &\iff \frac{x_{2n+4}y_{2}}{y_{2n+4}x_{2}}\cdot m \in \Supp(r) & &(\ref{lem:cycling_via_Q}: Q_{2}) \\
    &\iff \frac{x_{4}y_{2}}{y_{4}x_{2}}\cdot m \in \Supp(r) & &(\ref{lem:cycling} \text{ iteratively})
\end{align*}
which completes the proof.
\eproof

\bprop 
\label{prop:Jn2_J_contained_in_Q}
Let the setup be as in \ref{setup:Jn2}. We utilize notation as introduced in Lemma \ref{lem:Jn2_supp_r_divisibility}. Then,
\begin{align*}
    J + L + \a_{0} \subset ( h_{1,1} + h_{1,2} + h_{1,3}, h_{2,1} + h_{2,2} ) + L + \a_{0}.
\end{align*}
\eprop 

\bproof
Let $r \in J$. Let $S := \Supp(r) \setminus (L+\a_{0})$. Define the sets
\begin{align*}
    S_{1} &= \{h_{1,1},h_{1,2},h_{1,3}\} & S_{6} &= \{ m,x_{1}y_{2}h_{1,3} \} & S_{9} &= \{ h_{2,1}, h_{2,2} \}  \\
    S_{2} &= \{x_{1} h_{1,2}, x_{1} h_{1,3}\} &  S_{7} &= \{ m,x_{2}y_{1}h_{1,3} \} & S_{10} &= \{ y_{1}h_{2,1}, y_{1}h_{2,2} \} \\
    S_{3} &= \{y_{1} h_{1,1}, y_{1} h_{1,3}\} &  S_{8} &= \{ x_{1}y_{2}h_{1,3}, x_{2}y_{1}h_{1,3} \} & S_{11} &= \{ y_{2}h_{2,1}, y_{2}h_{2,2} \} \\
    S_{4} &= \{x_{2} h_{1,1}, x_{2} h_{1,3}\} & & & S_{12} &= \{ y_{1}y_{2}h_{2,1}, y_{1}y_{2}h_{2,2} \} \\
    S_{5} &= \{y_{2} h_{1,2}, y_{2} h_{1,3}\} 
\end{align*}
Then, Lemmas \ref{lem:Jn2_supp_r_divisibility}, \ref{lem:Jn2_two_elts_in_supp_r}, \ref{lem:Jn2_three_elts_in_supp_r}, and \ref{cor:Jn2_supp_r_exclusivity} imply that there exists $1 \leq t \leq 12$ and $1 \leq i_{1} < \cdots < i_{t} \leq 12$ satisfying $\# \left( \{i_{1},\ldots,i_{t}\} \cap \{6,7,8\} \right) \leq 1$ such that 
\begin{equation}
    \label{eqn:Jn2_reln_1}
    S = S_{i_{1}} \sqcup \cdots \sqcup S_{i_{t}}.
\end{equation}
We observe that for the elements of $S_{2}$ that we have the relation 
\begin{equation}
    \label{eqn:Jn2_reln_2}
    x_{1} (h_{1,1} + h_{1,2} + h_{1,3}) = x_{1} h_{1,2} + x_{1} h_{1,3} \mod{(L + \a_{0})}.
\end{equation}
A similar relation holds for the elements of $S_{i}$ for $3 \leq i \leq 7$. The elements of $S_{8}$ satisfy the relation
\begin{equation}
    \label{eqn:Jn2_reln_3}
    (x_{1}y_{2} + x_{2}y_{1})(h_{1,1} + h_{1,2} + h_{1,3}) = x_{1}y_{2}h_{1,3} + x_{2}y_{1}h_{1,3} \mod{(L + \a_{0})}.
\end{equation}
It follows that
\begin{align*}
    r = \sum_{j = 1}^{t} \sum_{g\in S_{i_{j}}} g \mod{(L + \a_{0})}.
\end{align*}
Now, equations \eqref{eqn:Jn2_reln_2} and \eqref{eqn:Jn2_reln_3} imply that
\begin{align*}
    \sum_{g\in S_{i_{j}}} g \in (h_{1,1}+h_{1,2}+h_{1,3},h_{2,1} + h_{2,2}),
\end{align*}
which completes the proof.
\eproof

\bthm
\label{thm:Jn2_not_F_pure}
Let $n\geq 0$ be a positive integer, and $G$ the graph $\mathrm{co\!-\!XF}_6^{2n+2}$. Then, $J_{G}$ is not F-pure.
\ethm

\bproof
When $n = 0$ and $n = 1$, it can be verified via Macaulay2 that $J_{G}$ is not F-pure. Hence, it suffices to consider the cases $n \geq 3$. We utilize the notation for $h_{i,j}$ and $L$ as they are defined in Lemma \ref{lem:Jn2_supp_r_divisibility}. We will use Proposition \ref{prop:reduction_step_fedder_computation} to prove that $J_{G}$ is not F-pure. Adopting the notation from Proposition \ref{prop:reduction_step_fedder_computation} we have that $A = \{2n+6,2n+7\}$, $B = [2n+7] \setminus (A\cup \{1,2\})$, $Q = P(A,B)$, $m = 7$, $B' = [7] \setminus \{1,2\}$, $Q' = P(\varnothing,B')$. We define the ideal
\begin{align*}
    K := ( h_{1,1} + h_{1,2} + h_{1,3}, h_{2,1} + h_{2,2} ) + L + (\{ x_{i}^{2},y_{i}^{2} \}_{i=6,7} ).
\end{align*}
We define $\mc{C} := \{P_{i,i+2} \mid 3 \leq i \leq 2n+3\}\cup \{Q_{i} \mid i \in \{1,2\}\}$. Proposition \ref{prop:Jn2_J_contained_in_Q} implies that the containment  \eqref{eqn:reduction_step_fedder_computation_assumption_1} of Proposition \ref{prop:reduction_step_fedder_computation} is satisfied. We verify using Macaulay2 that the containment \eqref{eqn:reduction_step_fedder_computation_assumption_2} of Proposition \ref{prop:reduction_step_fedder_computation} is satisfied. Thus, Proposition \ref{prop:reduction_step_fedder_computation} implies that $J_{G}$ is not F-pure.
\eproof

\section{Applications of Matsuda's Conjecture}
\label{sec:application_matsuda_theorem}

In this section, we deduce as a consequence of Theorem \ref{thm:matsudas_conjecture_is_true} that unmixed binomial edge ideals of K\"onig type are weakly closed, and that weakly closed graphs are stable under vertex deletion and completion.

\subsection{K\"onig type, Unmixed Binomial Edge Ideals Are Weakly Closed}

In \cite{herzog2022graded} Herzog, Hibi, and Moradi introduce the notion of a graded ideal of K\"onig type. 
\bdefn
$S = \kk[x_1,\ldots,x_n]$, and let $I \subseteq S$ be a graded ideal of height $h$. We say that $I$ is of \textbf{K\"onig type}, if there exists a sequence $\ul{f} = f_1,\ldots,f_h$ of homogeneous polynomials
that form part of a minimal system of generators of $I$ and a monomial order $<$
on $S$ such that $\mathrm{in}_<(f_1),\ldots,\mathrm{in}_<(f_h)$ is a regular sequence.
\edefn
They characterize the binomial edge ideals of K\"onig type via the following theorem.

\bthm[{\cite[Theorem 3.5]{herzog2022graded}}]
\label{thm:bei_konig_type}
Let $G$ be a graph. Then, $J_{G}$ is of K\"onig type if and only if $G$ contains vertex disjoint paths $P_{1}, \ldots, P_{t}$ such that 
\begin{align*}
    \sum_{i=1}^{t} \card{E(P_{i})} = \hgt J_{G}.
\end{align*}
\ethm 

Binomial edge ideals of K\"onig type satisfy various ``rigidity" properties. For instance, the property of being Cohen-Macaulay is independent of characteristic for binomial edge ideals of K\"onig type {\cite[Corollary 3.8]{herzog2022graded}}. We will show next that \textit{unmixed} binomial edge ideals of K\"onig type are F-pure independent of the base field so long as it has positive characteristic. For us, by an \textbf{unmixed} ideal we mean that all of the associated primes of the ideal have the same height.

\bprop 
\label{prop:konig_unmixed_bei_F_pure}
Let $G$ be a graph on $n$ vertices. Let $J_{G}$ be an unmixed ideal of K\"onig type. If the characteristic of the base field is $p >0$, then $R_{G}$ is F-pure.
\eprop 

\bproof
Let $P_{1},\ldots,P_{t}$ be as in Theorem \ref{thm:bei_konig_type}. Let $\alpha_{1},\ldots,\alpha_{g}$ denote the binomials corresponding to the edges of the $P_{i}$, where $g = \hgt J_{G}$. By {\cite[Corollary 3.3]{pandey2024linkage}}, if $R/\a$ is F-injective, where $\a = (\alpha_{1},\ldots,\alpha_{g})$, then $R/J_{G}$ is F-pure as $\{\alpha_{i}\}_{i=1}^{g}$ form a regular sequence in $J_G$ of length $\hgt J_{G}$. The ideal $\a$ is even F-pure. Since the $\alpha_{i}$ form a regular sequence, it follows from Proposition~\ref{prop:reg:sequence} that
\begin{align*}
    \a^{[p]} : \a = \left( \prod_{i=1}^{g} \alpha_{i}^{p-1} \right) + \a^{[p]}.
\end{align*}
The lead term of $\prod_{i=1}^{g} \alpha_{i}^{p-1}$ with respect to the lexicographic term order induced by $x_{1} > x_{2} > \cdots > x_{n} > y_{1} > y_{2} > \cdots > y_{n}$ does not belong to $\m^{[p]}$. Thus, 
\begin{align*}
    \a^{[p]}:\a \not \subset \m^{[p]}.
\end{align*}
Hence, Fedder's criterion (Theorem \ref{thm:fedder_criterion}) implies that $R/\a$ is F-pure.
\eproof 

\bcor 
\label{cor:unmixed_Konig_type_are_wc}
Let $G$ be a graph on $n$ vertices. Suppose that the binomial edge ideal $J_{G}$ is an unmixed ideal of K\"onig type. Then $G$ is weakly closed.
\ecor 

\bproof
Note that $J_{G}$ being unmixed (resp., of K\"onig type) are combinatorial statements independent of the base field; see, for instance, Proposition \ref{prop:min_primes_binomial_edge_ideal} (resp., \cite{laclair2023invariants}). Hence, we may assume that the base field has characteristic $2$. Proposition \ref{prop:konig_unmixed_bei_F_pure} implies that $R_{G}$ is F-pure, and Theorem \ref{thm:matsudas_conjecture_is_true} implies that $G$ is weakly closed.
\eproof

We note that one cannot remove the hypothesis that $J_{G}$ is unmixed. Indeed, for the big claw, its binomial edge ideal is of K\"onig type (see, {\cite[Theorem 5.6]{laclair2023invariants}}), but it is not weakly closed by Theorem \ref{thm:gallai}. (The big claw is in fact $T_{2}$ in Theorem \ref{thm:gallai}.) An interesting question that the authors do not know the answer to is to what extent the converse of Corollary \ref{cor:unmixed_Konig_type_are_wc} holds.

\bques
If $G$ is a graph which is weakly closed (with $J_{G}$ unmixed), then is $J_{G}$ of K\"onig type?
\eques 

For further reading on binomial edge ideals of K\"onig type, we refer the reader to \cite{herzog2022graded}, \cite{laclair2023invariants}, and \cite{williams2023lf}.

\subsection{Weakly Closed is Preserved Under Vertex Deletion and Completion}

In this section, we prove that the property of being weakly closed is preserved under vertex completion and deletion. This is an important observation as there is a short exact sequence relating the binomial edge ideal of a graph to that of its completion and deletion.

\bprop[{\cite[Lemma 4.8]{ohtani2011graphs}}, {\cite[Formula 1]{kumar2020depth}}]
\label{lem:exact_sequence_binomial_edge_ideal}
Let $G$ be a graph and $v \in G$ a non-simplicial vertex. Then, there is a short exact sequence of $R$-modules
\begin{equation}
\label{eqn:ses_binomial_edge_ideal}
    0 \ra R/J_{G} \ra R/J_{G_{v}} \oplus R_{v}/J_{G \setminus v} \ra R_{v}/J_{G_{v} \setminus v} \ra 0
\end{equation}
where $R_{v} = \k[X_{i},Y_{i} : i \in [n] \setminus \{v\}]$.
\eprop

This short exact sequence  \eqref{eqn:ses_binomial_edge_ideal} has proven useful in studying the Castelnuovo--Mumford regularity \cite{kumar2020regularity}, \cite{kumar2021binomial}, \cite{kumar2020depth}, \cite{laclair2024regularity}, \cite{laclair2024new} and Cohen--Macaulay property of binomial edge ideals \cite{bolognini2018binomial}, \cite{bolognini2022cohen}.

\bprop 
\label{prop:wc_preserved_under_completion_deletion}
Let $G$ be a weakly closed graph, and $v$ a vertex of $G$. Then the graphs $G \setminus v$, $G_{v}$, and $G_{v} \setminus v$ are weakly closed.
\eprop 

\bproof
Since $G$ is a weakly closed graph, $R_{G}$ is F-pure whenever the base field $\k$ has characteristic two (Theorem \ref{thm:weakly_closed_are_F_pure}). Lemma \ref{lem:F_pure_preserved_vertex_deletion_completion} implies that $G \setminus v$, $G_{v}$, and $G_{v} \setminus v$ define F-pure binomial edge ideals. Since the base field has characteristic two, Theorem \ref{thm:matsudas_conjecture_is_true} implies that the graphs $G \setminus v$, $G_{v}$, and $G_{v} \setminus v$ are all weakly closed.
\eproof 

\section{Further Questions} \label{sec:further_questions}

The results of this paper raise the interesting open problem of determining for which characteristics the binomial edge ideals of graphs from the ``co-regular" families are F-pure. Using Macaulay2, we were able to calculate the F-purity for several small ``co-regular" graphs provided that the characteristic is small, which we present in Table \ref{table:results}. In the first column, we denote the graph whose binomial edge ideal we are considering. In the first row, we denote the characteristic of the residue field. We denote by ``Y", ``N", ``?" for when $R_G$ is F-pure, is not F-pure, or is unknown to be F-pure, respectively.

\begin{table}[h]
\centering
\begin{tabular}{|c||c|c|c|c|c|}
\hline
& \multicolumn{4}{c}{\,\,Characteristic}& \\ \hline
      Graph  &2& 3 & 5 & 7 & 11 \\ \hline \hline
    \xrowht[()]{7pt} $\ol{C_5}$ &N& Y & Y & Y & Y \\ \hline
    \xrowht[()]{7pt} $\ol{C_7}$ &N& Y & ? & ? & ? \\ \hline
   \xrowht[()]{7pt}  $\mathrm{co\!-\!XF}_1^3$ 
     &N& N & ? & ? & ? \\ \hline
    \xrowht[()]{7pt} $\mathrm{co\!-\!XF}_5^3$ &N& N & ? & ? & ? \\ \hline
    \xrowht[()]{7pt} $\mathrm{co\!-\!XF}_6^2$ &N& N & ? & ? & ? \\ \hline
\end{tabular}
\caption{F-Purity of ``Co-Regular" families in small characteristic}
\label{table:results}
\end{table}

In \cite{rauh2013generalized}, Rauh provides a primary decomposition of generalized binomial edge ideals, which is analogous to the primary decomposition of binomial edge ideals, and proves that generalized binomial edge ideals are always radical. Later, Seccia generalized Matsuda's theorem \ref{thm:weakly_closed_are_F_pure} to generalized binomial edge ideals, proving that when the graph is weakly closed graph the associated generalized binomial edge ideal is a Knutson ideal and hence defines an F-pure quotient ring in every positive characteristic \cite{seccia2023binomial}. Thus, a natural direction for future research is to investigate analogues of Theorems \ref{thm:matsudas_conjecture_is_true} and \ref{thm:intro_non_f_purity} for the class of generalized binomial edge ideals. 

Ene, Herzog, Hibi, and Mohammadi introduced another class of ideals associated to any pure simplicial complex, the determinantal facet ideals \cite{ene2013determinantal}. In \textit{loc. cit.}, it is shown that these ideals are not always radical. However, for certain families of determinantal facet ideals that generalize the closed binomial edge ideals, Benedetti, Seccia, and Varbaro proved that the corresponding quotient rings are F-pure in every positive characteristic \cite{benedetti2022hamiltonian}. Further classes of combinatorially defined ideals that could be amenable to investigations of their F-purity via the techniques introduced in this paper are parity binomial edge ideals \cite{kahle2016parity} and permanental binomial edge ideals \cite{herzog2015ideal}.

\section{Macaulay2 Computations} \label{sec:M2_computations}

We include the Macaulay2 computations present in this paper at the repository: $\newline$ \noindent \url{https://github.com/adamlaclair/F-Purity-of-Binomial-Edge-Ideals}

\section*{Acknowledgements}
The authors would like to thank the anonymous referee for their helpful feedback which improved this paper. The authors would also like to thank Anna Brosowsky, Kyle Maddox, Matthew Mastroeni, Lance Edward Miller, Vaibhav Pandey, Claidu Raicu, Lisa Seccia, Anurag Singh, and Matteo Varbaro for helpful discussions. The authors would like to thank Justin Fong for pointing out some typos. The first author would like to thank Uli Walther for helpful feedback on earlier drafts of this paper. LaClair was partially supported by National Science Foundation grant DMS--2100288 and by Simons Foundation Collaboration Grant for Mathematicians \#580839.  McCullough was partially supported by National Science Foundation grant  DMS--2401256.

\bibliographystyle{abbrv}
\bibliography{bibliography}

\end{document}